\documentclass{amsart}

\usepackage{amsaddr}
\usepackage{orcidlink}
\usepackage{appendix}
\usepackage{amstext}
\usepackage{amsthm}
\usepackage{amssymb}
\usepackage{enumerate}
\usepackage{color}

\newtheorem{theorem}{Theorem}[section]
\newtheorem*{theorem*}{Theorem}
\newtheorem{proposition}[theorem]{Proposition}
\newtheorem{lemma}[theorem]{Lemma}
\newtheorem{corollary}[theorem]{Corollary}
\newtheorem{fact}[theorem]{Fact}
\theoremstyle{definition}
\newtheorem{definition}[theorem]{Definition}

\newcommand{\M}{\mathcal{M}}
\newcommand{\e}{\varepsilon}

\begin{document}
\title{Kuroda's theorem for $n$-tuples in semifinite von Neumann algebras}

\author[A.\ Ber]{Aleksey Ber}
\address{V.I.Romanovskiy Institute of Mathematics Uzbekistan Academy of Sciences
    University street, 9, Olmazor district, Tashkent, 100174, Uzbekistan,
    National University of Uzbekistan
    4, Olmazor district, Tashkent, 100174, Uzbekistan}
\email{aber1960@mail.ru}

\author[F.\ Sukochev]{Fedor Sukochev}
\address{School of Mathematics and Statistics, University of New South Wales, Kensington, NSW 2052, Australia}
\email{f.sukochev@unsw.edu.au}

\author[D.\ Zanin]{Dmitriy Zanin}
\address{School of Mathematics and Statistics, University of New South Wales, Kensington, NSW 2052, Australia}
\email{d.zanin@unsw.edu.au}

\author[H.\ Zhao]{Hongyin Zhao \orcidlink{0000-0003-2591-475X}}
\address{School of Mathematics and Statistics, University of New South Wales, Kensington, NSW 2052, Australia}
\email{hongyin.zhao@unsw.edu.au}

\maketitle
\begin{abstract} Let $\M$ be a semifinite von Neumann algebra and let $E$ be a symmetric function space on $(0,\infty)$. Denote by $E(\M)$ the non-commutative symmetric space of measurable operators affiliated with $\M$ and associated with $E.$ Suppose $n\in \mathbb{N}$ and $E\cap L_{\infty}\not\subset L_{n,1}$, where $L_{n,1}$ is the Lorentz function space with the fundamental function $\varphi(t)=t^{1/n}$. We prove that for every $\varepsilon>0$ and every commuting self-adjoint $n$-tuple $(\alpha(j))_{j=1}^n,$ where $\alpha(j)$ is affiliated with $\M$ for each $1\leq j\leq n,$ there exists a commuting $n$-tuple $(\delta(j))_{j=1}^n$ of diagonal operators affiliated with $\M$ such that $\max\{\|\alpha(j)-\delta(j)\|_{E(\M)},\|\alpha(j)-\delta(j)\|_{\infty}\}<\e$ for each $1\le j\le n$. In the special case when $\M=B(H)$, our results yield the classical Kuroda and Bercovici-Voiculescu theorems.
\end{abstract}

\section{Introduction}\label{intro sec}

Let $H$ be a (possibly non-separable) Hilbert space. Let $B(H)$ be the $\ast$-algebra of all bounded operators on $H$ with the uniform norm $\|\cdot\|_{\infty}$. Recall that an operator $d$ on $H$ is called {\em diagonal} if there exists a sequence of pairwise orthogonal projections $\{p_k\}_{k\in \mathbb{N}}$ in $B(H)$ such that $\sum_{k\ge1}p_k={\bf 1}$, and $dp_k=\lambda_kp_k$ where $\lambda_k\in \mathbb{C},$ for each $k\in \mathbb{N}.$ A commuting $n$-tuple of operators $(\delta(j))_{j=1}^n$ on $H$ is called {\em diagonal} if $\delta(j)$ is diagonal for each $1\le j\le n.$ 

In what follows, the symbol $H_0$ always denotes a separable Hilbert space. Kuroda's Theorem is one of the fundamental results in perturbation theory (see \cite{Kuroda1958} and \cite[Theorem X.2.3]{Kato1995}). It states that, if $\mathcal{J}$ is a Banach ideal (or in the term used in \cite{Gohberg1969}, symmetrically normed ideal) in $B(H_0)$ that is not contained in the trace class, then for every self-adjoint operator $b$ on $H_0$ and every $\e>0,$ there exists a diagonal operator $d$ on $H_0$ such that $\|b-d\|_{\mathcal{J}}<\e.$


In 1989, Bercovici and Voiculescu \cite{Bercovici1989} extended Kuroda's theorem to $n$-tuples of commutative self-adjoint operators, as follows: if a Banach ideal $\mathcal{J}\not\subset L_{n,1}(B(H_0))$, where $L_{n,1}(B(H_0))$  is the Lorentz-$(n,1)$ ideal in $B(H_0)$, then for every commuting self-adjoint $n$-tuple $(\alpha(j))_{j=1}^n\in (B(H_0))^n$ and every $\e>0$, there exists a diagonal $n$-tuple $(\delta(j))_{j=1}^n\in (B(H_0))^n$ such that 
$$\max_{1\le j\le n}\|\alpha(j)-\delta(j)\|_{\mathcal{J}}<\e.$$
Their extension was based on a method introduced by Voiculescu in \cite{V1979} for the treatment of the diagonality modulo Banach ideals. Voiculescu proved that the diagonality of $\alpha$ modulo $\mathcal{J}$ is equivalent to the existence of a quasicentral approximate unit for $\alpha$ relative to $\mathcal{J}$ (see \cite{V1979}).

Li et al. \cite{LSS2020} obtained a version of Kuroda's theorem for semifinite von Neumann algebras stating that if $\M$ is a properly infinite $\sigma$-finite semifinite von Neumann algebra $\M$ and $b\in\M$ is self-adjoint, then for every symmetric operator space (see \cite{LSS2020}) $\mathcal{J}\not\subset L_1(\M)$ and every $\e>0,$ there exists a diagonal operator $d\in\M$ such that $\|b-d\|_{\mathcal{J}}<\e.$ Here, $L_1(\M)$ is the non-commutative $L_1$ space (see \cite{PX2003}).

We now briefly introduce the semifinite setting and the corresponding notations; precise definitions will be provided in Section \ref{sec_pre}. Let $\M$ be a $\sigma$-finite von Neumann with a faithful normal semifinite trace $\tau.$ Let ${\rm Aff}(\M)$ be the collection of all linear operators $x$ on $H$ affiliated with $\M$. An operator $d\in {\rm Aff}(\M)$ is called {\em diagonal} if there exists a sequence of pairwise orthogonal projections $\{p_k\}_{k\in \mathbb{N}}$ in $\M$ such that $\sum_{k\in \mathbb{N}}p_k={\bf 1}$, and $dp_k=\lambda_kp_k$ where $\lambda_k\in \mathbb{C}$, for each $k\in \mathbb{N}.$ A commuting $n$-tuple $\delta=(\delta(j))_{j=1}^n\in ({\rm Aff}(\M))^n$ is called {\em diagonal} if $\delta(j)$ is diagonal for each $1\le j\le n.$

Suppose $(E,\|\cdot\|_E)$ is a symmetric function space on $(0,\infty)$ and $(E(\M),\|\cdot\|_{E(\M)})$ is the corresponding symmetric space associated with $\M$. We use the symbol $\alpha=(\alpha(j))_{j=1}^n$ to denote an $n$-tuple of operators. For two $n$-tuples $\alpha,\beta\in({\rm Aff}(\M))^n$, we write $\alpha+\beta=(\alpha(j)+\beta(j))_{j=1}^n.$ For every $n$-tuple $\gamma\in (E(\M))^n$, we write
\[\|\gamma\|_{E(\M)}=\max_{1\le j\le n}\|\gamma(j)\|_{E(\M)}.\]
We denote by $E^0$ the closure of $L_1\cap L_\infty$ in $E.$ The norm in $E\cap L_\infty$ is defined by
$$\|x\|_{E\cap L_\infty}=\max\{\|x\|_{E},\|x\|_{L_\infty}\},\quad x\in E\cap L_\infty.$$
Let $L_{n,1}$ be the standard Lorentz-$(n,1)$ space on $(0,\infty)$ (see e.g. \cite[p. 216]{Bennett1988}).

\begin{definition}\label{quasi approx unit} Let $E$ be a symmetric function space on $(0,\infty)$. A given commuting self-adjoint $n$-tuple $(\alpha(j))_{j=1}^n\in ({\rm Aff}(\M))^n$ is said to be \emph{diagonal modulo $E(\M)$}, if there exists a diagonal $n$-tuple $(\delta(j))_{j=1}^n\in ({\rm Aff}(\M))^n,$ such that $\alpha(j)-\delta(j)\in E^0(\M)$ for each $1\le j\le n$.
\end{definition}

The following theorem is the main result of the paper. It is a semifinite version of Kuroda-Bercovici-Voiculescu's theorem for $n$-tuples of operators. It extends \cite[Theorem 4]{Bercovici1989} to $\sigma$-finite semifinite von Neumann algebras. 

\begin{theorem}\label{thm_main} Let $\M$ be a $\sigma$-finite von Neumann algebra with a faithful normal semifinite trace $\tau$ and let $n\in \mathbb{N}$. Let $(E,\|\cdot\|_E)$ be a symmetric function space on $(0,\infty)$. If $E\cap L_\infty\not\subset L_{n,1}$, then for every commuting self-adjoint $n$-tuple $\alpha\in ({\rm Aff}(\M))^n$ and every $\e>0$, there exists a commuting diagonal $n$-tuple $\delta\in ({\rm Aff}(\M))^n$ such that
$$\alpha-\delta\in (E^0(\M)\cap \M)^n,\quad \|\alpha-\delta\|_{(E\cap L_\infty)(\M)}<\e.$$
\end{theorem}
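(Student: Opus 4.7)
The plan is to follow the quasicentral approximate unit strategy of Voiculescu, adapted to the semifinite setting and to the mixed norm $\|\cdot\|_{(E\cap L_\infty)(\M)}$. The first step is a reduction to bounded $\alpha$. Using joint spectral theory, split $\mathbf 1=e+(\mathbf 1-e)$ where $e$ is the joint spectral projection of $\alpha$ onto a bounded region of $\mathbb{R}^n$ of sufficiently large diameter. On the complement $(\mathbf 1-e)$ the joint spectrum of $\alpha$ can be partitioned into countably many measurable cubes of diameter less than $\e/2$, and choosing a constant in each cube yields a diagonal tuple agreeing with $\alpha(\mathbf 1-e)$ up to an error controlled in both norms. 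This reduces the theorem to the case $\alpha\in\M^n$.

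The key step is to prove, under the hypothesis $E\cap L_\infty\not\subset L_{n,1}$, the existence of an increasing sequence of finite-trace projections $p_m\uparrow \mathbf 1$ in $\M$ with
\[
\max_{1\le j\le n}\bigl\|[\alpha(j),p_m]\bigr\|_{(E\cap L_\infty)(\M)}<\e\cdot 2^{-m-1},\qquad m\ge 1.
\]
Such a sequence is the semifinite analogue of Voiculescu's quasicentral approximate unit. The geometric heuristic is that the joint spectrum of $\alpha$ is an $n$-dimensional subset of $\mathbb{R}^n$, so an approximate projection obtained from a cubical partition of this spectrum via joint functional calculus has a commutator concentrated on the $(n-1)$-dimensional boundary of the partition. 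The Lorentz norm $\|\cdot\|_{L_{n,1}}$ is precisely the correct gauge of this boundary content, so the hypothesis $E\cap L_\infty\not\subset L_{n,1}$ provides the room needed to make the commutators small in the mixed norm, while keeping the pieces of finite trace by $\sigma$-finiteness of $\M$.

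Given such $\{p_m\}$, set $q_m=p_m-p_{m-1}$ (with $p_0=0$). In the finite-trace corner $q_m\M q_m$, I would diagonalise the commuting tuple $\{q_m\alpha(j)q_m\}_{j=1}^n$ by partitioning its joint spectrum into finitely many sets of diameter less than $\e\cdot 2^{-m-1}$ and choosing a constant in each; this produces mutually orthogonal projections $\{r_{m,k}\}$ and scalars $\lambda_{m,k,j}$. Setting $\delta(j)=\sum_{m,k}\lambda_{m,k,j}r_{m,k}$ gives a diagonal commuting $n$-tuple. Writing $\alpha(j)$ as the sum of its block-diagonal part $\sum_m q_m\alpha(j)q_m$ and its off-diagonal part (which is expressible in terms of the commutators $[p_m,\alpha(j)]$), the block-diagonal piece is within $\e/2$ of $\delta(j)$ by construction of the $r_{m,k}$, and the off-diagonal piece has norm at most $\e/2$ by the quasicentral estimate summed as a geometric series. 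The main obstacle, and the technical heart of the argument, lies in the middle step: obtaining \emph{simultaneous} control of the commutators in both the $E$-norm and the uniform norm. The uniform part forces the cells of the partition to be small in Euclidean diameter, which pulls against the trace constraint encoded by $E\cap L_\infty\not\subset L_{n,1}$; resolving this tension by a careful choice of mollified spectral projections, perhaps through Cesàro-type averages of joint cubical partitions at several scales, is where I expect most of the work to be concentrated.
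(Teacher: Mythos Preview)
Your central step --- constructing projections $p_m\uparrow\mathbf 1$ with $\|[p_m,\alpha(j)]\|_{(E\cap L_\infty)(\M)}\to 0$ --- has a real gap, not just a technical one. The dyadic partition at scale $2^{-m}$ (or any mollification of it) yields projections with $\|[p_m,\alpha(j)]\|_\infty\le 2^{-m}$ and $\tau(p_m)\lesssim 2^{mn}$, hence only
\[
\|[p_m,\alpha(j)]\|_{E(\M)}\;\lesssim\; 2^{-m}\varphi_E(2^{mn}),
\]
where $\varphi_E$ is the fundamental function of $E$. For this to tend to zero you need $\liminf_{m}\varphi_E(2^{mn})/2^m=0$. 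But the hypothesis $E\cap L_\infty\not\subset L_{n,1}$ does \emph{not} imply this: take $E=L_n$, where $\varphi_E(t)=t^{1/n}$ so the ratio is identically $1$, while $L_n\cap L_\infty\not\subset L_{n,1}$ (the function $t\mapsto t^{-1/n}(\log t)^{-1}\chi_{(2,\infty)}$ witnesses the failure). Ces\`aro averaging of partitions at several scales cannot repair this, because it only convexifies estimates that are already available; it does not convert the fundamental-function bound into the sharper non-embedding hypothesis.

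The paper resolves exactly this obstruction by an indirect route. It first proves the theorem for Lorentz spaces $\Lambda_\psi$, where the fundamental function $\psi$ \emph{does} determine the space and $\Lambda_\psi\cap L_\infty\not\subset L_{n,1}$ is equivalent to $\liminf_m\psi(2^{mn})/2^m=0$; here your dyadic heuristic works. Then for a general $E$ it argues by contradiction: if $k_{E(\M)}(\alpha)>0$, a Hahn--Banach separation combined with K\"othe duality produces an explicit concave $\psi$ (built from the singular value function of the separating functional) with $E\subset\Lambda_\psi$ and $k_{\Lambda_\psi(\M)}(\alpha)>0$, contradicting the Lorentz case. This duality step --- passing from $E$ to an enveloping Lorentz space tailored to $\alpha$ --- is the substantive idea missing from your outline. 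A secondary point: spectral projections of $\alpha$ need not be $\tau$-finite, so ``by $\sigma$-finiteness'' is not enough; the paper introduces generating projections to manufacture $\tau$-finite approximants to $\mathbf 1$.
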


In the special case when $\M=B(H_0)$ where $H_0$ is a separable Hilbert space, Theorem \ref{thm_main} recovers the necessity direction of \cite[Theorem 4]{Bercovici1989}. In the special case when $\M$ is properly infinite, $n=1$, and under an additional assumption that $\alpha$ is bounded, Theorem \ref{thm_main} recovers \cite[Theorem 3.2.2]{LSS2020}.

Our method is entirely different to that of Li et al. \cite{LSS2020}. Instead, it is inspired by the approach to double operator integration theory in semifinite von Neumann algebras setting pioneered by de Pagter, the second co-author and Witvliet in \cite{PSW2002} and whose origin, in the classical setting $\M=B(H_0)$, may be found in the outstanding paper by Birman and Solomyak \cite{Birman1973}. This approach allows us to reduce the proof of Theorem \ref{thm_main} to the case where $\alpha$ admits a generating $\tau$-finite projection (see Section \ref{construction sec}). In this case, we prove the existence of a quasicentral approximate unit $\{p_m\}_{m\ge1}$ for $\alpha$ (see Definition \ref{approx unit}) with certain good properties, see Lemma \ref{proj co}. In the case $n=1$, $\{p_m\}_{m\ge1}$ satisfies $\|[p_m,\alpha]\|_{E(\M)}\to0$ as $m\to\infty$, whenever $E$ is a symmetric function space on $(0,\infty)$ such that $E\not\subset L_1;$ see the proof of Lemma \ref{wvn pre lemma}. This recovers \cite[Lemma 6.3]{PSW2002}.

To prove Theorem \ref{thm_main}, we first obtain an intermediate result for Lorentz ideals, which is given in Section \ref{lorentz sec} (see Theorem \ref{kuroda thm lorentz}). Here, we need our recent result in \cite{BSZZ2024} concerning the extension of Voiculescu's results \cite{V1979,V2018} in semifinite von Neumann algebras. 
This result states that for a given symmetric function space $E$ on $(0,\infty)$, a commuting self-adjoint $n$-tuple $\alpha\in\M^n$ is diagonal modulo $E^0(\M)$ if and only if $k_{E(\M)}(\alpha)=0,$ see Theorem \ref{thm_1.2_BSZZ2024}. Here, $k_{E(\M)}(\alpha)$ is a number called the {\em quasicentral modulus}, introduced by Voiculescu \cite{V1979,V2021,V2022}; see Definition \ref{def_qc}. It quantifies the obstruction of the commuting self-adjoint $n$-tuple $\alpha$ modulo the symmetric operator space $E(\M).$

Finally, to generalise Kuroda's theorem for general symmetric function spaces, we employ the approach from Voiculescu \cite{V1990}. The proof of Theorem \ref{thm_main} is presented in Section \ref{symetric space sec}.

In the case when $n=1$, Theorem \ref{thm_main} yields the corollary below. We decided to single this case out, since we provide an (alternative) simpler proof for this special case (see Appendix \ref{app single operator}). This corollary yields a proper extension of \cite[Theorem 3.2.2]{LSS2020} to $\sigma$-finite semifinite von Neumann algebras, since the condition $E\not\subset L_1$ is equivalent to the assumption  used in \cite[Theorem 3.2.2]{LSS2020} that $t^{-1}\varphi_E(t)\to0$ as $t\to\infty$, see Lemma \ref{lemma in SS} below. Here $\varphi_E$ is the fundamental function of $E$, i.e. $\varphi_E(t)=\|\chi_{(0,t)}\|_E,t\ge0$ (see \cite[Section 2.4]{Krein1982}).

\begin{corollary}\label{cor_main} Let $\mathcal{M}$ be a $\sigma$-finite von Neumann algebra with a faithful normal semifinite trace $\tau$. Suppose $E$ is a symmetric function space on $(0,\infty)$ such that $E\not\subset L_1.$ Let $a\in {\rm Aff}(\M)$ be self-adjoint. For every $\varepsilon>0,$ there exists a diagonal operator $d\in {\rm Aff}(\M)$ such that
$$a-d\in E^{0}(\M)\cap \M,\quad \|a-d\|_{(E\cap L_\infty)(\M)}<\e.$$
\end{corollary}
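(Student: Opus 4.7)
The plan is to deduce Corollary \ref{cor_main} as the $n=1$ case of Theorem \ref{thm_main}. First, for $n=1$ the Lorentz space $L_{n,1}$ has fundamental function $\varphi(t) = t^{1/n} = t$, and one checks directly that $L_{1,1}$ coincides with $L_1$. Hence the hypothesis of Theorem \ref{thm_main} specialises to $E \cap L_\infty \not\subset L_1$.

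The bulk of the work is to verify that this is equivalent to the corollary's hypothesis $E \not\subset L_1$. One direction is trivial. For the converse, given $f \in E \setminus L_1$, pass to the decreasing rearrangement $f^*$ (which lies in $E$ by symmetry) and set $g = \min(f^*, 1)$; then $g \in L_\infty$, and $g \in E$ by the lattice property. To see $g \notin L_1$, use the embedding $E \subset L_1 + L_\infty$, which forces $\int_0^a f^*(t)\,dt < \infty$ for every finite $a$, so that $\int_0^{\infty} f^* = \infty$ implies $\int_a^\infty f^* = \infty$ for every finite $a$; a short case analysis (on whether $\{f^* > 1\}$ has infinite measure or measure $T < \infty$) then yields $\int g = \infty$ in every case. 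Alternatively, one can simply quote Lemma \ref{lemma in SS}, which identifies both $E \not\subset L_1$ and $E \cap L_\infty \not\subset L_1$ with the common fundamental-function condition $t^{-1}\varphi_E(t) \to 0$ as $t \to \infty$.

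With the hypothesis verified, Theorem \ref{thm_main} applied to the $1$-tuple $(a)$ produces a diagonal $1$-tuple $(d) \in ({\rm Aff}(\M))^1$; unpacking, $d \in {\rm Aff}(\M)$ is diagonal and satisfies $a - d \in E^0(\M) \cap \M$ with $\|a - d\|_{(E \cap L_\infty)(\M)} < \varepsilon$, which is exactly the conclusion of the corollary. The main obstacle is the hypothesis equivalence above, and even this is routine given Lemma \ref{lemma in SS}; the substantive content lives entirely inside Theorem \ref{thm_main}, while the alternative simpler proof promised in Appendix \ref{app single operator} presumably uses the spectral theorem for a single self-adjoint operator to bypass the heavier machinery needed for general $n$.
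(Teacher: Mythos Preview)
Your proposal is correct and matches the paper's own claim in the introduction that ``in the case when $n=1$, Theorem~\ref{thm_main} yields the corollary below.'' Your verification that $L_{1,1}=L_1$ and that $E\not\subset L_1$ is equivalent to $E\cap L_\infty\not\subset L_1$ (either by the direct truncation argument or via Lemma~\ref{lemma in SS}, noting that $\varphi_{E\cap L_\infty}(t)=\max\{\varphi_E(t),1\}$ has the same asymptotic behaviour as $\varphi_E$) is exactly the missing link the paper leaves to the reader.

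The only written-out proof in the paper, however, is the self-contained one in Appendix~\ref{app single operator}, and it differs from your speculation in one respect: it does not merely invoke the spectral theorem, but reuses the generating-projection machinery of Section~\ref{construction sec} (in particular Lemma~\ref{proj co}) to build an explicit sequence of $\tau$-finite projections $\{p_m\}$ with $\|[a,p_m]\|_\infty\le 2^{-m}$ and $\tau(p_m)\le 2^m\tau(q)$, then assembles a diagonal approximant directly by block-diagonalising along a subsequence. What the appendix \emph{does} bypass is the quasicentral-modulus theory (Theorem~\ref{thm_1.2_BSZZ2024}) and the Hahn--Banach/K\"othe-duality arguments of Section~\ref{symetric space sec}; this is possible because for $n=1$ the condition $\varphi_E(t)=o(t)$ (rather than the weaker $\liminf$ condition of Lemma~\ref{conditions on psi}) lets one pass to a subsequence along which $\|[a,p_m]\|_{E(\M)}\to 0$ without invoking Lorentz-space interpolation. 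Your route is shorter and entirely legitimate; the appendix buys independence from the heavier Section~\ref{symetric space sec} machinery.
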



In the special case when $\M=B(H_0)$ where $H_0$ is a separable Hilbert space, Corollary \ref{cor_main} is exactly the classical Kuroda's theorem \cite{Kuroda1958}. Indeed, in this case it suffices to consider the spaces $E$ which satisfy $E\subset L_\infty$. Assuming $E\subset L_\infty$, we have $E\subset L_1$ if and only if $E(B(H_0))\subset L_1(B(H_0))$. Hence, Corollary \ref{cor_main} retrieves the classical Kuroda's theorem \cite{Kuroda1958}.

A symmetric function space $E$ on $(0,\infty)$ is called an {\em $(\M,n)$-diagonalisation space} if every commuting self-adjoint $n$-tuple $\alpha\in ({\rm Aff}(\M))^n$ is diagonal modulo $E^0(\M).$ It is called an {\em $(\M,n)$-obstruction space} if there exists a commuting self-adjoint $n$-tuple $\alpha\in ({\rm Aff}(\M))^n$ that is not diagonal modulo $E^0(\M).$ 

Theorem \ref{thm_main} amounts to saying that if $E\cap L_\infty\not\subset L_{n,1}$, then $E$ is an $(\M,n)$-diagonalisation space. In the case where $\M=B(H_0)$, surprisingly, the converse statement is also true, as shown in \cite[Theorem 4]{Bercovici1989}. That is, if $E\cap L_\infty\subset L_{n,1}$, then $E$ is a $(B(H_0),n)$-obstruction space. This follows from an analogue of the classical Kato-Rosenblum theorem \cite{Kato1957,Rosenblum1957}, as presented by Voiculescu \cite{V1979,V1981}.

In Section \ref{sec_rem}, we present further remarks concerning the converse of Theorem \ref{thm_main} and Corollary \ref{cor_main}, particularly their connections to the Kato-Rosenblum theorem.


\section{Preliminaries}\label{sec_pre}

In this section, we recall notions of semifinite von Neumann algebras, symmetric spaces associated with a given semifinite von Neumann algebra, and quasicentral modulus of a given $n$-tuple in $\M.$

\subsection{$\tau$-measurable operators}\label{measur subsec}
Let $H$ be a (possibly non-separable) Hilbert space and let $\mathbf{1}$ be the identity operator on $H.$ Let $B(H)$ be the $\ast$-algebra of all bounded linear operators on $H$, equipped with the uniform operator norm $\|\cdot\|_\infty.$

For a closed densely defined operator $x$ on $H$, we denote by $\mathfrak{l}(x)$ the projection onto the subspace $\overline{x({\rm dom}(x))}$, which is called the \textit{left support} of $x$, i.e.\ $\mathfrak{l}(x)$ is the smallest projection $e\in B(H)$ for which $ex=x$. We denote by $\mathfrak{r}(x)$ the projection onto the subspace $(\ker x)^\perp$, which is called the \textit{right support} of $x$, i.e.\ $\mathfrak{r}(x)$ is the smallest projection $e\in B(H)$ for which $xe=x$.



%


A \emph{von Neumann algebra} $\M$ is a $\ast$-subalgebra of $B(H)$ such that $\M=\M''$, where $\M''$ is the double commutant of $\M.$ A von Neumann algebra $\M$ is called {\em semifinite} if there exists a faithful normal semifinite trace $\tau$ on $\M$ (see \cite{T1}). A von Neumann algebra $\M$ is \emph{$\sigma$-finite} if each orthogonal family of non-zero projections in $\M$ is countable.

Let $\mathcal M$ be a von Neumann algebra acting on $H$ with the uniform norm $\|\cdot\|_\infty$ and a faithful normal semifinite trace $\tau$. By $\mathcal P(\mathcal M)$ we denote the set of all projections in $\mathcal M.$ Let $\mathcal{F}(\M)=\{x\in\M:\tau(\mathfrak{l}(x))<\infty\}.$

Suppose $x$ is a self-adjoint operator on $H$. The spectrum of $x$ is denoted by ${\rm Spec}(x).$ The spectral measure of $x$ is denoted by $e^x:\mathfrak{B}(\mathbb{R})\to \mathcal{P}(B(H)),$ where $\mathfrak{B}(\mathbb{R})$ represents the collection of all Borel sets in $\mathbb{R}.$ 

\begin{definition}\label{commuting definition 1}
Let $a,b$ be self-adjoint operators on $H$. We say that $a$ and $b$ {\em commute} if their spectral measures commute (see \cite[p. 150]{Birman1987}).
\end{definition}

A linear operator $a:{\rm dom}(a)\to H$ is said to be {\em affiliated with $\M$} if $ba\subset ab$ for all $b$ from the commutant $\M'$ of $\M,$ and that the collection of all operators affiliated with $\M$ is denoted by ${\rm Aff}(\M).$

Let $n\in \mathbb{N}$. An $n$-tuple $\alpha$ of unbounded operators on $H$ is said to be affiliated with $\M$ if $\alpha(j)\in{\rm Aff}(\M)$ for every $1\le j\leq n.$
Similarly, $\alpha$ is said to be self-adjoint if $\alpha(j)$ is self-adjoint for every $1\le j\le n.$

Let $n\in \mathbb{N}$ and let $\alpha$ be a commuting self-adjoint $n$-tuple affiliated with $\M.$ Denote by $\mathfrak{B}(\mathbb{R}^n)$ the collection of all Borel sets in $\mathbb{R}^n$. The spectral measure of $\alpha$ is denoted by $e^{\alpha}:\mathfrak{B}(\mathbb{R}^n)\to \mathcal{P}(\M)$.

A closed densely defined operator $x:{\rm dom}(x)\to H$ affiliated with~$\mathcal{M}$ is said to be \emph{$\tau$-measurable} if there exists $s>0$ such that $\tau(e^{|x|}(s,\infty))<\infty.$
The set of all $\tau$-measurable operators is denoted by $S(\tau)$. It is a $\ast$-algebra w.r.t. the strong sum, strong multiplication (denoted simply by $x+y$ and $xy$, respectively, for $x,y\in S(\tau)$). Denote by $S_{sa}(\tau)$ (respectively, $S_+(\tau)$) the set of all self-adjoint (respectively, positive) elements of $S(\tau)$. General facts about $\tau$-measurable operators may be found in \cite{DPS2023}, \cite{Nelson1974}, \cite{Segal1953}, \cite{Terp1981} or \cite[Section IX.2]{T2}.

For every $x\in S(\tau)$, the \emph{distribution function} of $|x|$ is defined by setting
$$d(s;|x|)=\tau(e^{|x|}(s,\infty)),\quad s\geq 0.$$

\begin{definition} Let $x\in S(\tau).$ The \emph{singular value function} 
$\mu(x):t\mapsto \mu(t;x)$ of the operator $x$, is defined by
$$\mu(t;x)=\inf\{s\geq0: d(s;|x|)\leq t\},\quad t\geq 0.$$
\end{definition}

We have (see \cite[Proposition 3.2.5]{DPS2023})
$$\mu(t;x)=\inf\{\|xp\|_{\infty}:\ p\in \mathcal{P}(\M),\ p(H)\subset {\rm dom}(x),\ \tau(\mathbf{1}-p)\leq t\},\quad t\geq0.$$
We refer the reader to \cite{DPS2023,Fack1986,LSZ2013} for more properties of singular value functions.


Suppose $\{x_i\}_{i\in I}$ is an increasing net in $S_{sa}(\tau)$ and suppose $x\in S_{sa}(\tau).$ We write $x_i\uparrow x$ if $x$ is the least upper bound of $\{x_i\}_{i\in I}$ in $S_{sa}(\tau).$ 



The trace $\tau$ on $\M_+$ can be extended to $S_+(\tau)$ by setting (see \cite[Section 3.3]{DPS2023})
\begin{equation}\label{trace as integral}
\tau(x)=\int_0^\infty \mu(t;x)dt,\quad x\in S_+(\tau).
\end{equation}
Moreover, it can be extended to the set 
$$\{x\in S_{sa}(\tau):x_+\in L_1(\M) \text{ or } x_-\in L_1(\M)\}$$ by the linearity.

The following lemma can be found in Proposition 3.3.3 (iv) of \cite{DPS2023}.

\begin{lemma}\label{DDP1993 3.9}
If a net $0\le x_i\uparrow x$ in $S(\tau)$ then $\tau(x_i)\uparrow \tau(x).$
\end{lemma}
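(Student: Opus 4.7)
The plan is to reduce the statement to the case of uniformly bounded nets in $\M_+$, where normality of the trace directly applies, and then to lift the result back to the $\tau$-measurable setting via a spectral truncation. For each $N\in\mathbb{N}$, the function $f_N\colon[0,\infty)\to[0,N]$ defined by $f_N(\lambda)=\min(\lambda,N)$ is bounded, continuous, and operator monotone, so the Borel functional calculus produces $f_N(x_i),f_N(x)\in\M_+$ with uniform norms at most $N$.

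The first key step is to argue that $f_N(x_i)\uparrow f_N(x)$ in $\M_+$ for each fixed $N$. Operator monotonicity of $f_N$ yields a monotone increasing net dominated by $f_N(x)$, and one identifies $f_N(x)$ as the least upper bound by invoking order-continuity of the bounded Borel functional calculus on $S_{sa}(\tau)$. Once this is in hand, normality of $\tau$ on $\M$ delivers $\tau(f_N(x_i))\uparrow\tau(f_N(x))$ for each fixed $N$. The second step is to send $N\to\infty$: for any $y\in S_+(\tau)$ the identity $\mu(t;f_N(y))=\min(\mu(t;y),N)\uparrow\mu(t;y)$, combined with the classical monotone convergence theorem applied to the integral (\ref{trace as integral}), gives $\tau(f_N(y))\uparrow\tau(y)$. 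Interchanging the two monotone suprema yields
\begin{align*}
\sup_i\tau(x_i)=\sup_i\sup_N\tau(f_N(x_i))=\sup_N\sup_i\tau(f_N(x_i))=\sup_N\tau(f_N(x))=\tau(x),
\end{align*}
where the interchange is legitimate because both indices range over non-negative, monotone increasing quantities.

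The main obstacle is the order-continuity claim in the first step, namely that $x_i\uparrow x$ in $S_{sa}(\tau)$ forces $f_N(x_i)\uparrow f_N(x)$ in $\M$ for every bounded operator monotone $f_N$. All other ingredients, in particular normality of $\tau$ on $\M_+$ and the scalar monotone convergence theorem for the Lebesgue integral $\int_0^\infty\mu(t;\cdot)\,dt$, are standard, and the closing double-supremum interchange is elementary.
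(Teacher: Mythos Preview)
The paper does not give its own proof of this lemma; it simply records it as Proposition~3.3.3(iv) of \cite{DPS2023}. Your outline is a sound way to establish the result. The step you correctly isolate as the crux---that $x_i\uparrow x$ in $S_+(\tau)$ forces $f_N(x_i)\uparrow f_N(x)$ in $\M_+$---can be closed by combining two further standard facts (both also in \cite{DPS2023}): an order supremum $x_i\uparrow x$ in $S_+(\tau)$ is automatically a limit in the measure topology, and the bounded continuous functional calculus $y\mapsto f_N(y)$ is continuous for that topology on $S_{sa}(\tau)$. Together these give $f_N(x_i)\to f_N(x)$ in measure, while operator monotonicity and uniform boundedness give $f_N(x_i)\uparrow z$ in $\M_+$ for some $z$, forcing $z=f_N(x)$. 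The remaining ingredients---normality of $\tau$ on $\M_+$, the identity $\mu(t;f_N(y))=\min(\mu(t;y),N)$, and the interchange of the two suprema---are routine, as you say. A slightly more direct alternative, which avoids the truncation $f_N$ entirely, is to show that $0\le x_i\uparrow x$ implies $\mu(t;x_i)\uparrow\mu(t;x)$ for every $t>0$ and then apply the scalar monotone convergence theorem directly to the defining integral \eqref{trace as integral}; this trades the functional-calculus continuity step for a statement about distribution functions under order suprema.
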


The following lemma can be found in Propositions 3.4.30 and 3.4.32 in \cite{DPS2023}.
\begin{lemma}\label{DDP1993 3.4}
If $x,y\in S(\tau)$ and if $xy,yx\in L_1(\M),$ then $\tau(xy)=\tau(yx).$ If in addition, $0\le x\in S(\tau)$ and $0\le y\in S(\tau)$ then $x^{\frac12}yx^{\frac12},y^{\frac12}xy^{\frac12}\in L_1(\M)$ and 
$$\tau(xy)=\tau(x^{\frac12}yx^{\frac12})=\tau(y^{\frac12}xy^{\frac12}).$$
\end{lemma}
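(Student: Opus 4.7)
The plan is to reduce each claim to the bounded case via spectral truncation, and then invoke the standard tracial cyclicity on $\mathcal{M}$. The two main ingredients I will rely on are: (i) the classical identity $\tau(ab) = \tau(ba)$ valid for $a \in \mathcal{M}$ and $b \in L_1(\mathcal{M})$, obtained by approximating $b$ in $L_1$-norm by elements of $\mathcal{F}(\mathcal{M})$ and exploiting normality of $\tau$; and (ii) the $L_1$-dominated convergence principle in $S(\tau)$ --- if $z_n \to z$ in measure and $\mu(t; z_n) \le \mu(t; g)$ for some $g \in L_1(\mathcal{M})$, then $z_n \to z$ in $L_1(\mathcal{M})$-norm, which is immediate from \eqref{trace as integral}.

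For the first assertion, set $p_n := e^{|x|}(n, \infty)^\perp$ and $q_n := e^{|y|}(n, \infty)^\perp$, projections in $\mathcal{M}$ with $p_n, q_n \uparrow \mathbf{1}$ strongly, and $xp_n, yq_n \in \mathcal{M}$. At each $n$, ingredient (i) yields $\tau((xp_n)(yq_n)) = \tau((yq_n)(xp_n))$. I then decompose
\[
xy - (xp_n)(yq_n) = xy(\mathbf{1} - q_n) + x(\mathbf{1} - p_n)(yq_n)
\]
(and analogously for $yx$). The first summand satisfies $\mu(t; xy(\mathbf{1} - q_n)) \le \mu(t; xy)$, since right-multiplication by a projection does not increase singular values, and converges to $0$ in measure, so by (ii) it tends to $0$ in $L_1(\mathcal{M})$-norm. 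The second summand is controlled analogously via the polar decomposition of $x$, using that the high-energy part $|x|(\mathbf{1}-p_n)$ becomes sparse as $n \to \infty$ against the fixed $L_1$-element $xy$. Passing to the limit gives $\tau(xy) = \tau(yx)$.

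For the second assertion, assume $x, y \ge 0$; then $yx = (xy)^* \in L_1(\mathcal{M})$ as well. Introduce $x_n := xe^{x}[0,n]$ and $y_m := ye^{y}[0,m]$ in $\mathcal{M}_+$, with $x_n \uparrow x$ and $y_m \uparrow y$. For each fixed $m$, the bounded cyclicity identity $\tau(x_n y_m) = \tau(y_m^{1/2} x_n y_m^{1/2})$ holds; since $y_m^{1/2}$ is a fixed bounded operator, the map $x_n \mapsto y_m^{1/2} x_n y_m^{1/2}$ is order-preserving, so $y_m^{1/2} x_n y_m^{1/2} \uparrow y_m^{1/2} x y_m^{1/2}$ as $n \to \infty$. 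Lemma \ref{DDP1993 3.9}, combined with (ii) applied to $x_n y_m \to x y_m$ (dominated by $\mu(t; xy_m)$ via $x_n \le x$ and the commutation $x_n x = x x_n$), yields $\tau(y_m^{1/2} x y_m^{1/2}) = \tau(x y_m)$. Letting $m \to \infty$, a further application of (ii) gives $xy_m \to xy$ in $L_1(\mathcal{M})$-norm, hence $\tau(xy_m) \to \tau(xy)$; combined with Fatou's lemma for $\tau$ on positive operators applied to $y_m^{1/2} x y_m^{1/2} \to y^{1/2} x y^{1/2}$ in measure, this gives $\tau(y^{1/2} x y^{1/2}) \le \tau(xy) < \infty$, so $y^{1/2} x y^{1/2} \in L_1(\mathcal{M})$. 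Finally, applying the already-proved first assertion to the pair $A := y^{1/2}$, $B := y^{1/2} x$, for which $AB = yx \in L_1(\mathcal{M})$ and $BA = y^{1/2} x y^{1/2} \in L_1(\mathcal{M})$, gives $\tau(y^{1/2} x y^{1/2}) = \tau(yx) = \tau(xy)$; the identity for $x^{1/2} y x^{1/2}$ is obtained by the symmetric argument.

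The principal obstacle in both parts is upgrading the strong convergence of the spectral truncations $p_n \uparrow \mathbf{1}$, $q_n \uparrow \mathbf{1}$ to $L_1(\mathcal{M})$-norm convergence of the truncated products. The strong (and in-measure) convergence is immediate, but the norm convergence requires singular-value domination by the given $L_1$-elements $xy$ and $yx$; this in turn rests on the Fack--Kosaki singular value calculus of \cite[Ch.\ 3]{DPS2023} together with the integral representation \eqref{trace as integral}, which reduces the matter to a scalar dominated convergence statement for singular value functions.
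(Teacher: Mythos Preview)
The paper does not prove this lemma; it simply cites Propositions~3.4.30 and~3.4.32 of \cite{DPS2023}. So there is no argument in the paper to compare your attempt against, and your proof must stand on its own.

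Your overall strategy --- spectral truncation plus $L_1$-dominated convergence --- is the standard route. The second assertion is handled carefully, and your final move of applying the first assertion to the pair $(y^{1/2},\,y^{1/2}x)$ is clean. The difficulty lies in the first assertion. You write that ``ingredient (i) yields $\tau((xp_n)(yq_n)) = \tau((yq_n)(xp_n))$'', but ingredient (i) as you state it requires one factor to lie in $L_1(\mathcal{M})$; here $xp_n,\,yq_n$ are only in $\mathcal{M}$, and in general neither is $\tau$-integrable (the spectral projections $p_n,q_n$ need not be $\tau$-finite). Your decomposition does establish that the \emph{products} $(xp_n)(yq_n)$ and $(yq_n)(xp_n)$ lie in $L_1(\mathcal{M})$, but the identity $\tau(ab)=\tau(ba)$ for $a,b\in\mathcal{M}$ with $ab,ba\in L_1(\mathcal{M})$ is exactly the bounded special case of what you are proving. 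Invoking it here is circular unless you supply a separate argument for that case --- and that case is not markedly easier than the general one.

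One repair is to route through $L_2$ rather than $L_\infty$. From $|x|y=u^\ast(xy)\in L_1(\mathcal{M})$ (polar decomposition $x=u|x|$), ingredient (i) gives $\tau(xy)=\tau(|x|y\cdot u)$, reducing to the case where one factor is positive; a further square-root splitting together with the $\tau(z^\ast z)=\tau(zz^\ast)$ property and density of $\mathcal{F}(\mathcal{M})$ in $L_2(\mathcal{M})$ then closes the loop without circularity. This is essentially what the proofs in \cite{DPS2023} and \cite{Fack1986} do; your write-up would need to incorporate an argument of this kind at the bounded step.
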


The following lemma follows from Lemmas \ref{DDP1993 3.9} and \ref{DDP1993 3.4} immediately.
\begin{lemma}\label{trace limit lem} Let $0\le x\in L_1(\M)+\M$. If $\{r_i\}_{i\in I}$ is a net in $\mathcal{F}_1^+(\M)$ such that $r_i\uparrow {\bf 1}$, then
$$\tau(x)=\lim_{i} \tau(r_ix)$$
\end{lemma}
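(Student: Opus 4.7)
The plan is to reduce to the two cases $x \in L_1(\M)^+$ and $x \in \M^+$ via a positive decomposition $x = x_1 + x_2$ with $x_1 \in L_1(\M)^+$ and $x_2 \in \M^+$, obtained by spectral truncation of $x$ at a sufficiently large level (the fact that $\mu(x)(t)$ is bounded for $t$ large, which holds because $x \in L_1(\M) + \M$, guarantees that the upper tail of $x$ is integrable). Since $r_i \le \|r_i\|_\infty\,\mathfrak{l}(r_i)$ and $\tau(\mathfrak{l}(r_i)) < \infty$, one has $r_i \in L_1(\M) \cap \M$, whence $r_i x_1,\, r_i x_2,\, r_i x$ all lie in $L_1(\M)$ by the two-sided ideal property of $L_1(\M)$ in $\M$. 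Lemma~\ref{DDP1993 3.4} then yields
\[
\tau(r_i x) = \tau(x^{1/2} r_i x^{1/2}), \qquad \tau(r_i x_k) = \tau(x_k^{1/2} r_i x_k^{1/2}) \quad (k=1,2),
\]
so it suffices to establish the convergence for $x_1$ and $x_2$ separately.

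For the bounded case $x_2 \in \M^+$: the net $(x_2^{1/2} r_i x_2^{1/2})_i$ is increasing in $\M^+$, and since $r_i \uparrow \mathbf{1}$ is a bounded increasing net it converges strongly to $\mathbf{1}$; as $x_2^{1/2}$ is bounded, one obtains $x_2^{1/2} r_i x_2^{1/2} \to x_2$ strongly, hence $x_2^{1/2} r_i x_2^{1/2} \uparrow x_2$. Lemma~\ref{DDP1993 3.9} then gives $\tau(r_i x_2) \uparrow \tau(x_2)$.

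For the integrable case $x_1 \in L_1(\M)^+$: truncate by the spectral projections $p_n := e^{x_1}([0,n])$ and set $y_n := x_1 p_n \in \M^+$, so that $y_n \uparrow x_1$ in $S_+(\tau)$ and $\tau(y_n) \uparrow \tau(x_1)$ by Lemma~\ref{DDP1993 3.9}. The bounded case applied to each $y_n$ yields $\lim_i \tau(r_i y_n) = \tau(y_n)$. The identity $\tau(r_i x_1) = \tau(r_i^{1/2} x_1 r_i^{1/2})$ (Lemma~\ref{DDP1993 3.4}) combined with $x_1 \ge y_n$ gives $\tau(r_i x_1) - \tau(r_i y_n) = \tau(r_i^{1/2}(x_1 - y_n) r_i^{1/2}) \ge 0$, so $\liminf_i \tau(r_i x_1) \ge \tau(y_n) \to \tau(x_1)$; conversely $x_1^{1/2} r_i x_1^{1/2} \le x_1$ yields $\tau(r_i x_1) \le \tau(x_1)$, and hence $\tau(r_i x_1) \to \tau(x_1)$. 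Adding the two cases produces $\tau(r_i x) \to \tau(x)$, as required. The principal subtlety is that in the integrable case $x_1^{1/2}$ is typically unbounded, so one cannot argue for $x_1^{1/2} r_i x_1^{1/2} \uparrow x_1$ by a direct strong-operator-topology argument; the spectral truncation $y_n$ is what converts the question back to the bounded regime where such reasoning is legitimate.
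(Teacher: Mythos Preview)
Your argument is correct and uses precisely the two ingredients the paper invokes (Lemmas~\ref{DDP1993 3.9} and~\ref{DDP1993 3.4}); the paper does not actually write out a proof, saying only that the result follows immediately from those lemmas. The one-line route the paper presumably has in mind is $\tau(r_ix)=\tau(x^{1/2}r_ix^{1/2})\uparrow\tau(x)$, which requires knowing that $x^{1/2}r_ix^{1/2}\uparrow x$ in $S_+(\tau)$ even when $x^{1/2}$ is unbounded; your spectral decomposition $x=x_1+x_2$ and the further truncation $y_n$ are an honest way to supply that missing order-convergence step by reducing to the bounded regime, so your write-up is, if anything, more careful than the paper's.
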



For any self-adjoint operators $a,b\in B(H)$ we say that $a$ is \emph{orthogonal} to $b$ if $ab=0.$ Note that this is equivalent to say that $\mathfrak{l}(a)\mathfrak{l}(b)=0.$

We need the following simple lemma. The proof is standard and thus omitted. 
\begin{lemma}\label{trivial lemma}
Let $\{a_m\}_{m\ge 0}$ be a sequence of pairwise orthogonal self-adjoint operators in $B(H)$. If there exists a constant $M>0$ such that $\|a_m\|_{\infty}\le M$ for each $m\ge0$, then 
$\sum_{m\ge0}a_m$ converges in the strong operator topology.
\end{lemma}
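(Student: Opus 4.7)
The plan is to verify Cauchy convergence of the partial sums $S_N = \sum_{m=0}^N a_m$ applied to each vector $\xi \in H$. The hypothesis that $\{a_m\}$ is pairwise orthogonal, combined with self-adjointness, yields that the left supports $p_m := \mathfrak{l}(a_m)$ are pairwise orthogonal projections in $B(H)$ and that $a_m = p_m a_m p_m$. Consequently, the vectors $\{a_m \xi\}_{m\ge 0}$ lie in mutually orthogonal subspaces of $H$.

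Using the Pythagorean identity together with self-adjointness, for $N > N' \ge 0$ I would write
\[
\|S_N\xi - S_{N'}\xi\|^2 = \sum_{m=N'+1}^{N} \|a_m\xi\|^2 = \left\langle \sum_{m=N'+1}^{N} a_m^2\,\xi,\,\xi\right\rangle.
\]
The crux is the operator inequality $a_m^2 \le M^2 p_m$, which combined with $\sum_m p_m \le \mathbf{1}$ gives $\sum_{m=0}^N a_m^2 \le M^2\mathbf{1}$ for every $N$. Hence $\sum_{m=0}^\infty \|a_m\xi\|^2 \le M^2\|\xi\|^2 < \infty$, so the tails vanish as $N,N'\to\infty$ and $\{S_N\xi\}$ is Cauchy in $H$. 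Completeness of $H$ produces a limit, and since $\xi$ was arbitrary, $S_N$ converges in the strong operator topology.

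There is no genuine obstacle here --- as the authors note, the argument is standard. The only point requiring minor care is translating the given formulation of orthogonality ($a_m a_k = 0$ for $m \ne k$) into the fact $p_m p_k = 0$ for $m \ne k$, which is exactly the equivalence recorded in the paragraph preceding the lemma statement, and it is this that justifies the Pythagorean step above.
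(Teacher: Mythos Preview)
Your argument is correct and is precisely the standard one: orthogonality of the supports $p_m=\mathfrak{l}(a_m)$ gives the Pythagorean identity, and $a_m^2\le M^2 p_m$ together with $\sum_m p_m\le\mathbf{1}$ bounds the tail. The paper does not give a proof (it is explicitly omitted as standard), so there is nothing further to compare.
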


\subsection{Symmetric spaces}\label{symm subsec} 
We now recall the definition of symmetric spaces associated with a semifinite von Neumann algebra, which is an analogue of Banach ideals in $B(H)$. For these notions we refer the reader to \cite{DDP1993, DPS2023, KS2008, Krein1982, LSZ2013}.

Let $S(0,\infty)$ be the set of all real-valued Lebesgue measurable functions $f$ on $(0,\infty)$ with $d(s;|f|):=\rho(\{x:|f(x)|>s\})<\infty$ for some $s>0,$ where $\rho(\cdot)$ is the Lebesgue measure.

\begin{definition}\label{symmetric function space def}
A \emph{symmetric function space} $(E,\|\cdot\|_E)$ is a Banach space of real-valued Lebesgue measurable functions on $(0,\infty)$ such that for every $y\in E$ and $x\in S(0,\infty)$ such that $\mu(x)\leq\mu(y)$ we have $x\in E$ and $\|x\|_E\leq \|y\|_E.$
\end{definition}

Suppose $E$ is a symmetric function space on $(0,\infty)$. Define operator space 
\begin{equation}\label{symetric space from function}
E(\M)=\{x \in S(\tau):\ \mu(x)\in E\},
\end{equation}
and set
\begin{equation}\label{E norm def}
\left\|x \right\|_{E(\mathcal{M})}=\left\|\mu(x )\right\|_E.
\end{equation}
From \cite{KS2008} (see also \cite[Theorem 3.5.5]{LSZ2013}), $(E(\M),\|\cdot\|_{E(\M)})$ is a Banach space and is called a {\em symmetric space}. In particular, for $1\le p< \infty$, if $E=L_p$ is the standard Lebesgue $L_p$ function space on $(0,\infty)$, we obtain the classical noncommutative $L_p$ spaces $L_p(\M)$, see e.g. \cite{DPS2023,
PX2003}. 
For convenience, we set $L_\infty(\M)=\M$ equipped with the uniform norm $\|\cdot\|_\infty.$

For a given pair of symmetric function spaces $E,F$ on $(0,\infty)$, the norm in $E(\M)\cap F(\M)$ (also denoted by $(E\cap F)(\M)$) is defined by setting
$$\|x\|_{E(\M)\cap F(\M)}=\max\{\|x\|_{E(\M)},\|x\|_{F(\M)}\},\quad x\in E(\M)\cap F(\M).$$
The norm in $E(\M)+F(\M)$ (also denoted by $(E+F)(\M)$) is defined by setting
$$\|x\|_{E(\M)+F(\M)}=\inf\{\|y\|_{E(\M)}+\|z\|_{F(\M)}:x=y+z,y\in E(\M),z\in F(\M)\}.$$

It is a well-known fact that (see e.g. \cite[Theorem II.4.1]{Krein1982})
$$L_1\cap L_\infty\subset E\subset L_1+L_\infty$$
with continuous embeddings. This can be extended to symmetric spaces associated with the von Neumann algebra $\M$, namely, for any symmetric function space $E$ on $(0,\infty)$, we have
$$(L_1\cap L_\infty)(\M)\subset E(\M)\subset (L_1+L_\infty)(\M)$$
with continuous embedding (see \cite[p. 395]{DPS2023}).

Lorentz spaces form an important class of symmetric function spaces. Details of Lorentz spaces can be found in e.g. \cite{Bennett1988, Krein1982}. Let $\psi$ be an increasing concave function on $[0,\infty)$ such that $\psi(0)=0$. The \emph{Lorentz space} $\Lambda_{\psi}$ associated with $\psi$ is the set of all Lebesgue measurable functions $f$ on $(0,\infty)$ such that 
$$\|f\|_{\Lambda_\psi}=\int_0^\infty\mu(t;f)d\psi(t)<\infty.$$ 

Applying \eqref{symetric space from function} to $E=\Lambda_{\psi}$, we obtain
\begin{equation}\label{Lorentz space from function}
\Lambda_{\psi}(\M)=\{a\in S(\tau):\int_0^\infty \mu(t;a)d\psi(t)<\infty\},
\end{equation}
equipped with the norm
$$\|a\|_{\Lambda_{\psi}(\M)}=\int_0^\infty \mu(t;a)d\psi(t),\quad a\in \Lambda_{\psi}(\M).$$

In the case when $\psi(t)=t^{\frac1p},$ $1\le p<\infty,$ we denote the space $\Lambda_{\psi}$ by $L_{p,1}.$ 

The following elementary fact can be found in \cite[Lemma I.3.3]{Krein1982}.

\begin{fact}\label{symm embedding lem} If $E,F$ are two symmetric function spaces on $(0,\infty)$ such that $F\subset E$, then $F$ is embedded in $E$ continuously.
\end{fact}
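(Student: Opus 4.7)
The plan is to apply the closed graph theorem to the inclusion map $\iota : F \to E$. By hypothesis this is a well-defined linear map between Banach spaces, so to obtain continuity it suffices to show the graph is closed.

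Suppose $\{f_n\}\subset F$ satisfies $f_n\to f$ in $F$ and, at the same time, $f_n\to g$ in $E$. Using the continuous embeddings $F\subset L_1+L_\infty$ and $E\subset L_1+L_\infty$ recalled just before the statement of Fact \ref{symm embedding lem}, we have $f_n\to f$ and $f_n\to g$ in the norm of $L_1+L_\infty$. Since convergence in $L_1+L_\infty$ implies convergence in measure on every set of finite Lebesgue measure (this is straightforward: on a set of measure $t$, the $L_1+L_\infty$ norm dominates $\int_0^t\mu(s;\cdot)\,ds$), we deduce that $f_n\to f$ and $f_n\to g$ both in measure. Uniqueness of the limit in measure gives $f=g$ a.e., so the graph of $\iota$ is closed.

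The closed graph theorem now yields a constant $C>0$ such that $\|f\|_E\le C\|f\|_F$ for all $f\in F$, which is the claimed continuous embedding. I do not foresee a substantial obstacle here; the only point requiring a line of justification is the passage from norm convergence in a symmetric space to convergence in measure, and this is immediate from the universal sandwich $L_1\cap L_\infty\subset\,\cdot\,\subset L_1+L_\infty$ already quoted in the text.
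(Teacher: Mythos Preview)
Your closed-graph-theorem argument is correct and is the standard proof; the paper itself gives no argument, merely citing \cite[Lemma I.3.3]{Krein1982}. One small simplification: once $f_n\to f$ and $f_n\to g$ in $L_1+L_\infty$, uniqueness of limits in that Banach space already yields $f=g$ a.e., so the detour through convergence in measure is unnecessary.
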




Recall that for a symmetric function space $E$ on $(0,\infty)$, we denote the fundamental function of $E$ by $\varphi_E$, i.e. $\varphi_E(t)=\|\chi_{(0,t)}\|_E,t\ge0.$ Let $E^0$ be the closure of $L_1\cap L_{\infty}$ in $E.$ It follows from \cite[p.225]{dodds2014normed} that
$$E^0(\M)=\overline{\mathcal{F}(\M)}^{E(\M)}=\overline{(L_1\cap L_\infty)(\M)}^{E(\M)}.$$ 


The following lemma is well-known (see e.g. \cite[Theorem II.4.8]{Krein1982}).

\begin{lemma}\label{separable lemma}
Let $E$ be a symmetric function space on $(0,\infty).$ $E$ is separable if and only if $E=E^0$ and $\varphi_E(0+)=0.$ 
\end{lemma}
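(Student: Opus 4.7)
The claim is the classical characterization of separability for symmetric function spaces on $(0,\infty)$. I would prove it through the intermediate notion of an \emph{order-continuous norm} on $E$, namely the property that $f_n \downarrow 0$ in $E$ implies $\|f_n\|_E \to 0$. The plan is to establish the cycle: separability $\Rightarrow$ order-continuity $\Leftrightarrow$ ($E = E^0$ and $\varphi_E(0+) = 0$) $\Rightarrow$ separability.

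The equivalence of order-continuity with the pair of conditions is the routine part. For the forward direction, order-continuity applied to $\chi_{(0,1/n)} \downarrow 0$ yields $\varphi_E(1/n) = \|\chi_{(0,1/n)}\|_E \to 0$, i.e., $\varphi_E(0+) = 0$; and applied to $(f - f_n) \downarrow 0$ where $f_n := f\chi_{\{1/n \le |f| \le n\}} \in L_1 \cap L_\infty$, it yields $f \in E^0$ for every $f \in E$, hence $E = E^0$. For the converse, given $f_n \downarrow 0$ in $E$ and $\varepsilon > 0$, use $E = E^0$ to choose $h \in L_1 \cap L_\infty$ with $\|f_1 - h\|_E < \varepsilon/2$, write $f_n = (f_n - f_n \wedge h) + (f_n \wedge h)$, and observe that $f_n - f_n \wedge h \le (f_1 - h)_+$ controls the first summand; for $\|f_n \wedge h\|_E$, exploit the boundedness and $L_1$-integrability of $h$ together with $\varphi_E(0+) = 0$ via a standard truncation argument (cut $h$ at a small threshold and use the fundamental function on the resulting finite-measure set).

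The principal nontrivial step is separability $\Rightarrow$ order-continuity. Suppose for contradiction $f_n \downarrow 0$ in $E$ with $\|f_n\|_E \geq \delta > 0$. I would iteratively extract a subsequence $\{f_{n_k}\}$ and pairwise disjoint measurable sets $A_k$, drawn from the excess regions $\{f_{n_k} > \eta_k\} \setminus \{f_{n_{k+1}} > \eta_k\}$ for carefully chosen thresholds $\eta_k > 0$, such that $\|f_{n_k}\chi_{A_k}\|_E \geq \delta/4$. Then for each $S \subset \mathbb{N}$, the function $g_S := \sum_{k \in S} f_{n_k}\chi_{A_k}$ lies in $E$ (being dominated by $f_1$), and disjointness of supports gives $\|g_S - g_T\|_E \geq \delta/4$ whenever $S \ne T$, producing an uncountable $\delta/4$-separated subset of $E$ and contradicting separability. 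The remaining implication ($E = E^0$ and $\varphi_E(0+) = 0$) $\Rightarrow$ separability is then easy: the countable family of rational-coefficient step functions on intervals with rational endpoints is $E$-norm dense in $L_1 \cap L_\infty$ (using $\varphi_E(0+)=0$ to dispose of small-measure residual sets), hence dense in $E^0 = E$.

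The principal obstacle lies in the interlocked choice of thresholds $\eta_k$ and indices $n_k$ in the separability-to-order-continuity step; one cannot simply take the blocks from the differences $f_{n_k} - f_{n_{k+1}}$, since their supports need not be disjoint, and a monotone-threshold argument is required to pass to a genuinely disjoint collection while maintaining the $E$-norm lower bound. Once this extraction is in hand, the rest of the argument follows from standard Banach lattice techniques.
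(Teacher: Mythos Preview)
The paper does not prove this lemma; it records it as well-known and cites \cite[Theorem II.4.8]{Krein1982}. Your route through order-continuity of the norm is precisely the classical argument behind that reference, so there is nothing substantive to compare---your outline is the standard proof, and the paper simply invokes it.

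One remark on the truncation step: for $f_n := f\chi_{\{1/n \le |f| \le n\}}$ to lie in $L_1$, you need $\{|f| \ge 1/n\}$ to have finite measure, which is not automatic for $f \in E \subset L_1 + L_\infty$ in general. It does follow once you observe that order-continuity forces $\chi_{(0,\infty)} \notin E$ (since $\chi_{(n,\infty)} \downarrow 0$ pointwise but $\mu(\chi_{(n,\infty)}) = \chi_{(0,\infty)}$ has constant $E$-norm), and hence every level set $\{|f| > s\}$ of an element of $E$ has finite measure; you should make this explicit before invoking the truncation. Alternatively, truncate the domain as well, e.g.\ take $f_n := (|f| \wedge n)\chi_{(0,n)}$, which lies in $L_1 \cap L_\infty$ unconditionally. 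The extraction of the uncountable $\delta/4$-separated family from a failure of order-continuity is the genuine content, and your sketch of it is sound.
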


The following lemma can be found in \cite[Theorem II.4.7]{Krein1982}.
\begin{lemma}\label{decreasing lemma}
For every symmetric function space $E$ on $(0,\infty),$ the function $t\mapsto \varphi_E(t)/t, t>0$ is decreasing.
\end{lemma}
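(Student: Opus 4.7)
The plan is to reduce the claim, by rearrangement, to the inequality $s\varphi_E(t)\le t\varphi_E(s)$ for every $0<s<t$. Subadditivity of $\varphi_E$ (an immediate consequence of the triangle inequality applied to $\chi_{(0,nt)}=\sum_{k=0}^{n-1}\chi_{(kt,(k+1)t)}$) only yields $\varphi_E(nt)\le n\varphi_E(t)$ for integer $n$, and does not by itself give the required inequality for non-integer ratios. I will therefore use a discrete rotational averaging argument.

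For each $n\in\mathbb N$ and $k=0,1,\dots,n-1$, set
\[A_k=\{x\in(0,t):(x-kt/n)\bmod t\in(0,s)\},\]
which is the image of the interval $(0,s)$ under rotation by $kt/n$ on the circle $\mathbb R/t\mathbb Z$. Since $s<t$, each $A_k$ has Lebesgue measure exactly $s$; consequently $\mu(\chi_{A_k})=\chi_{(0,s)}$, and the defining property of a symmetric function space gives $\|\chi_{A_k}\|_E=\varphi_E(s)$. For a fixed $x\in(0,t)$, the cardinality of $\{k:x\in A_k\}$ equals the number of lattice points from $\{0,t/n,\dots,(n-1)t/n\}$ lying inside the arc $(x-s,x)\bmod t$ of length $s$ on a circle of circumference $t$; an elementary pigeonhole estimate shows any such arc contains at least $\lfloor ns/t\rfloor$ lattice points. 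Therefore $\sum_{k=0}^{n-1}\chi_{A_k}\ge\lfloor ns/t\rfloor\,\chi_{(0,t)}$ pointwise on $(0,t)$.

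Monotonicity of $\mu$ under pointwise domination of non-negative functions, together with the symmetric-space axiom, upgrades this pointwise inequality to $\lfloor ns/t\rfloor\varphi_E(t)\le\bigl\|\sum_{k=0}^{n-1}\chi_{A_k}\bigr\|_E$, while the triangle inequality bounds the right-hand side by $n\varphi_E(s)$. Dividing by $n$ and letting $n\to\infty$ (so that $\lfloor ns/t\rfloor/n\to s/t$) produces $(s/t)\varphi_E(t)\le\varphi_E(s)$, which is precisely the required inequality.

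The only non-routine step is the pigeonhole bound on lattice points in an arc, and it is exactly there that the hypothesis $s<t$ is genuinely used, to guarantee that the rotating arc does not wrap around and overcount itself on the circle. Everything else is a direct application of the triangle inequality and the defining property of a symmetric function space.
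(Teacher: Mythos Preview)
Your argument is correct. The paper does not supply its own proof of this lemma; it merely quotes it as \cite[Theorem II.4.7]{Krein1982}, so there is no in-paper argument to compare against. Your discrete rotational-averaging trick is a clean and self-contained route to quasi-concavity of the fundamental function, and is essentially a discretisation of the standard averaging argument used in the textbook literature. One minor imprecision is worth flagging: the pigeonhole assertion that \emph{every} open arc of length $s$ contains at least $\lfloor ns/t\rfloor$ of the $n$ equally spaced lattice points can fail by one at those $x\in(0,t)$ for which an endpoint of the arc $(x-s,x)\bmod t$ lands exactly on some $kt/n$ (for instance, take $t=2$, $s=1$, $n=2$, $x=1$: then $\lfloor ns/t\rfloor=1$ but neither $k=0$ nor $k=1$ satisfies $(1-k)\bmod 2\in(0,1)$). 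Since this exceptional set of $x$ is finite, the inequality $\sum_{k}\chi_{A_k}\ge\lfloor ns/t\rfloor\,\chi_{(0,t)}$ still holds almost everywhere on $(0,t)$, and that is all that is required for the norm comparison in a symmetric function space.
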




The following assertion can be found in Lemma 4 (c) of \cite{SS}.

\begin{lemma}\label{lemma in SS}
Let $E$ be a symmetric function space on $(0,\infty)$. We have $E\not\subset L_1$ if and only if $\varphi_E(t)=o(t)$ as $t\to\infty.$
\end{lemma}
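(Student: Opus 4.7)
The plan is to prove the two directions separately. For the ``only if'' direction, assume $E \subset L_1$. By Fact~\ref{symm embedding lem}, the inclusion is continuous, so there is a constant $C > 0$ with $\|f\|_{L_1} \le C\|f\|_E$ for every $f \in E$. Testing this against $f = \chi_{(0,t)}$ yields $t \le C \varphi_E(t)$, so $\varphi_E(t)/t \ge 1/C$ for every $t > 0$; in particular, $\varphi_E(t)$ is not $o(t)$ as $t \to \infty$.

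For the substantive ``if'' direction I would argue by contrapositive. Suppose $\varphi_E(t) \ne o(t)$. The point of invoking Lemma~\ref{decreasing lemma} here is to upgrade this limsup-type statement into a uniform bound: because $\varphi_E(t)/t$ is decreasing, its failure to tend to $0$ forces a uniform lower bound $\varphi_E(t) \ge c\,t$ for some $c > 0$ and every $t > 0$. The remaining task is then to deduce $E \subset L_1$ from this uniform estimate on the fundamental function.

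To this end, fix $f \in E$ and pass to $\mu(f) \in E$, which has the same $E$-norm. For each $t > 0$ I would introduce the Ces\`aro-averaged step function $g_t := \bigl(t^{-1}\int_0^t \mu(s;f)\,ds\bigr)\chi_{(0,t)}$. Because $\mu(f)$ is decreasing, a direct comparison of antiderivatives shows that $g_t \prec\prec \mu(f)$ in the sense of Hardy--Littlewood--Polya majorization. The classical Calder\'on--Mitjagin theorem (see e.g.\ \cite{Krein1982}) then upgrades this to the norm bound $\|g_t\|_E \le \|f\|_E$. Since $g_t$ is a scalar multiple of a characteristic function, $\|g_t\|_E = \varphi_E(t) \cdot t^{-1}\int_0^t \mu(s;f)\,ds$, and combining with $\varphi_E(t)/t \ge c$ yields $\int_0^t \mu(s;f)\,ds \le c^{-1}\|f\|_E$. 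Sending $t \to \infty$ produces $\|f\|_{L_1} \le c^{-1}\|f\|_E$, so $E \subset L_1$, completing the contrapositive.

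The main obstacle, such as it is, lies in pinning down the correct comparison function; the rest is routine once Lemma~\ref{decreasing lemma} has converted the asymptotic condition into a uniform bound. A shorter alternative would proceed through K\"othe duality, using the fundamental-function identity to translate $\varphi_E(t) \ge c t$ into a uniform estimate on the associated dual space and then invoking H\"older's inequality applied to $\mu(f)\chi_{(0,t)}$; either route is immediate modulo standard facts from the theory of symmetric function spaces.
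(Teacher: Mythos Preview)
The paper does not supply its own proof of this lemma; it simply records the statement and cites \cite[Lemma~4(c)]{SS}. Your argument is the standard one and is correct. One remark on the invocation of Calder\'on--Mitjagin: deducing $\|g_t\|_E\le\|f\|_E$ from the submajorisation $g_t\prec\prec\mu(f)$ is, on its face, the \emph{fully symmetric} property, which is formally stronger than Definition~\ref{symmetric function space def}. Your alternative K\"othe-dual route unwinds to the same inequality, namely $\int_0^t\mu(s;f)\,ds\le (t/\varphi_E(t))\|f\|_E$, which is precisely the identity $\varphi_E(t)\varphi_{E^\times}(t)=t$. This identity is established for arbitrary symmetric (rearrangement-invariant) Banach function spaces in \cite[Theorem~II.5.2]{Krein1982}, so the step is justified at the level of generality the paper works in; it is just worth being aware that one is using that result rather than the bare majorisation principle.
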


\subsection{Quasicentral modulus}\label{qm subsec}

We now recall a theorem in \cite{BSZZ2024} concerning the characterization of the diagonality of an $n$-tuple in $\M$ modulo a given symmetric space, using quasicentral modulus. For $n$-tuples $\alpha,\beta\in \M^n$, $a\in\M$ and $\lambda\in \mathbb{C}$, we write $$[a,\alpha]=([a,\alpha(j)])_{j=1}^n,\quad \alpha+\lambda \beta=(\alpha(j)+\lambda\beta(j))_{j=1}^n.$$

For $a,b\in S(\tau)$, the commutator $[a,b]=ab-ba.$ Suppose $E$ is a symmetric function space on $(0,\infty)$. For every $n$-tuple $\beta=(\beta(j))_{j=1}^n\in (E(\M))^n$, we write
$$\|\beta\|_{E(\M)}=\max_{1\le j\le n}\|\beta(j)\|_{E(\M)}.$$
For every $n$-tuple $\alpha\in \M^n$ and every $a\in\M,$ we write $[a,\alpha]=([a,\alpha(j)])_{j=1}^n.$
\begin{definition}\label{def_qc}
Let $E$ be a symmetric function space on $(0,\infty)$. For every $n$-tuple $\alpha\in \M^n$, we define the \emph{quasicentral modulus} $k_{E(\M)}(\alpha)$ of $\alpha$ by the formula (see Definition A.1 in \cite{BSZZ2023})
$$k_{E(\M)}(\alpha)= \sup_{a\in \mathcal{F}_1^+(\M)}\inf_{\stackrel{r\ge a}{r\in \mathcal{F}_1^+(\M)}}\|[r,\alpha]\|_{E(\M)}.$$
Here $\mathcal{F}_1^+(\M)=\{x\in\M:0\le x\le \mathbf{1},\tau(\mathfrak{l}(x))<\infty\}$, where $\mathfrak{l}(x)$ is the left support of $x$ (see Subsection \ref{measur subsec}).
\end{definition}

\begin{definition}\label{approx unit} A sequence $\{r_k\}_{k\in \mathbb{N}}$ in $\mathcal{F}_1^+(\M)$ is called a {\em quasicentral approximate unit for $\alpha$ relative to $E(\M)$} if $r_k\to {\bf 1}$ in the strong operator topology and $\|[r_k,\alpha]\|_{E(\M)}\to0$ as $k\to\infty.$ In particular, if $E=L_{\infty}$, we simply call $\{r_k\}_{k\in \mathbb{N}}$ a {\em quasicentral approximate unit for $\alpha$}.
\end{definition}

From \cite[Theorem 3.6]{BSZZ2023}, $k_{E(\M)}(\alpha)=0$ if and only if there exists a quasicentral approximate unit for $\alpha$ relative to $E(\M)$.

The following theorem was proved in \cite[Theorem 1.2]{BSZZ2024} (together with Remark 1.3 there).

\begin{theorem}\label{thm_1.2_BSZZ2024}
Let $\M$ be a $\sigma$-finite von Neumann algebra with a faithful normal semifinite trace $\tau$. Let $E$ be a symmetric function space on $(0,\infty)$. For every commuting self-adjoint $n$-tuple $\alpha\in \M^n$, the following statements are equivalent
\begin{enumerate}[{\rm (i)}]
\item $k_{E(\M)}(\alpha)=0$;
\item For every $\e>0,$ there exists a diagonal $n$-tuple $\delta\in \M^n$ such that
$$\alpha-\delta\in (E^0(\M))^n,\quad \|\alpha-\delta\|_{(E\cap L_\infty)(\M)}<\e.$$
\end{enumerate}
\end{theorem}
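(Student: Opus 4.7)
My plan is to prove the two implications separately. (ii) $\Rightarrow$ (i) is a direct perturbation argument, while (i) $\Rightarrow$ (ii) is the substantive direction, requiring a semifinite $n$-tuple adaptation of the Weyl--von Neumann--Kuroda--Berg--Voiculescu construction driven by a suitable quasicentral approximate unit.

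For (ii) $\Rightarrow$ (i), fix $a\in\mathcal{F}_1^+(\M)$ and $\varepsilon>0$. Use (ii) to choose a diagonal $n$-tuple $\delta\in\M^n$ with $\|\alpha-\delta\|_{E(\M)}<\varepsilon/3$, and let $\{p_k\}$ denote the common orthogonal spectral projections of $\delta$. Inside each corner $p_k\M p_k$, semifiniteness provides $\tau$-finite subprojections $e_{k,m}\uparrow p_k$; assemble the $\tau$-finite projection $s_N=\sum_{k=1}^N e_{k,N}$, which commutes with every $\delta(j)$ and satisfies $s_N\uparrow\mathbf{1}$ in the strong operator topology. Set $r=s_N\vee\mathfrak{l}(a)$; then $r\ge a$, $r\in\mathcal{F}(\M)$, and since $s_N\le r$ the difference $r-s_N$ is itself a projection with $\tau(r-s_N)\le\tau(\mathfrak{l}(a)(\mathbf{1}-s_N))\to 0$ as $N\to\infty$ by Lemma \ref{trace limit lem}. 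Writing $[r,\alpha(j)] = [r,\alpha(j)-\delta(j)] + [r-s_N,\delta(j)]$, the first term has $E(\M)$-norm at most $2\|\alpha-\delta\|_{E(\M)}<2\varepsilon/3$, and the second is bounded by $2\|\delta(j)\|_\infty\varphi_E(\tau(r-s_N))$, which is below $\varepsilon/3$ for $N$ large. Hence $k_{E(\M)}(\alpha)=0$.

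For the main implication (i) $\Rightarrow$ (ii), the starting point is a quasicentral approximate unit $\{r_k\}\subset\mathcal{F}_1^+(\M)$ for $\alpha$ relative to $E(\M)$, supplied by \cite[Theorem 3.6]{BSZZ2023}, with $\|[r_k,\alpha]\|_{E(\M)}\to 0$ arbitrarily rapidly. After passing to spectral projections of $r_k$ at a generic threshold and monotonising (using standard functional-calculus commutator estimates), one may assume the $r_k$ are $\tau$-finite projections with $r_k\uparrow\mathbf{1}$ and $\sum_k\|[\alpha, r_k-r_{k-1}]\|_{E(\M)}<\varepsilon/2$, where $r_0=0$. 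Set $p_k=r_k-r_{k-1}$: a pairwise orthogonal system of $\tau$-finite projections summing to $\mathbf{1}$. Partition the bounded joint spectrum of $\alpha$ in $\mathbb{R}^n$ into cubes $\{Q_l\}$ of diameter less than $\varepsilon/2$, fix representatives $\xi_l\in Q_l$, and consider the joint spectral projections $E_l=e^{\alpha}(Q_l)$. Within each $p_k$-block, convert the nearly-commuting compression $(p_k\alpha(j)p_k)_j$ into a genuinely commuting tuple inside the finite von Neumann algebra $p_k\M p_k$ (via a perturbation argument absorbing the mismatch into the commutator budget, or by interlacing $\{p_k\}$ with $\{E_l\}$ using a polar-decomposition truncation), and produce pairwise orthogonal $\tau$-finite projections $f_{k,l}\le p_k$ with $\sum_l f_{k,l}=p_k$. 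Define the diagonal $n$-tuple $\delta(j)=\sum_{k,l}\xi_l(j)f_{k,l}$. The total error $\alpha-\delta$ then splits into an off-diagonal contribution $\sum_k[\alpha,p_k]p_k$ of $E(\M)$-norm $<\varepsilon/2$, and a block-diagonal contribution whose $L_\infty$-norm is bounded by the cube diameter and whose $E(\M)$-norm is controlled by the same commutator sums; $\tau$-finiteness of each $p_k$ ensures the error lies in $E^0(\M)$.

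The main obstacle is the within-block step: the compressions $(p_k\alpha(j)p_k)_{j=1}^n$ commute only approximately, so the joint spectral calculus cannot be applied verbatim. For $n=1$ this difficulty vanishes (any self-adjoint is trivially a commuting $1$-tuple); for $n\ge 2$ one must convert approximate commutativity into exact commutativity inside the finite corner $p_k\M p_k$ at the cost of a controlled additional error. A secondary technical point is the calibration of the decay rate of the commutator bounds against the fundamental function $\varphi_E(\tau(p_k))$, so that the off-diagonal and within-block contributions both stay under $\varepsilon$ simultaneously in $(E\cap L_\infty)(\M)$, and verifying membership in $E^0(\M)$ (not merely $E(\M)$) by exhibiting the error as an $E(\M)$-limit of $\mathcal{F}(\M)$-truncations.
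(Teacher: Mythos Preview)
The paper does not itself prove Theorem \ref{thm_1.2_BSZZ2024}; it is quoted verbatim from \cite[Theorem 1.2]{BSZZ2024} and used as a black box. So there is no proof here against which to compare, and I can only assess your proposal on its own merits.

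Your argument for (ii) $\Rightarrow$ (i) is morally correct but has a slip: from $r=s_N\vee\mathfrak{l}(a)$ one gets $\tau(r-s_N)=\tau(\mathfrak{l}(a))-\tau(s_N\wedge\mathfrak{l}(a))$ via Kaplansky, and $s_N\uparrow\mathbf{1}$ does \emph{not} in general force $s_N\wedge\mathfrak{l}(a)\uparrow\mathfrak{l}(a)$; moreover, even if $\tau(r-s_N)\to 0$, the bound $2\|\delta(j)\|_\infty\varphi_E(\tau(r-s_N))$ need not tend to $0$ when $\varphi_E(0+)>0$. A cleaner route: show directly that every diagonal $\delta$ satisfies $k_{E(\M)}(\delta)=0$ by exhibiting the commuting $\tau$-finite projections $s_N=\sum_{k\le N}e_{k,N}$ as a quasicentral approximate unit (this is exactly the equivalence recorded after Definition~\ref{approx unit}), and then observe that $\|[s_N,\alpha]\|_{E(\M)}=\|[s_N,\alpha-\delta]\|_{E(\M)}\le 2\|\alpha-\delta\|_{E(\M)}$, which can be made arbitrarily small by choice of $\delta$. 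This avoids the $r\ge a$ bookkeeping entirely.

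For (i) $\Rightarrow$ (ii) there is a genuine gap, and you name it yourself: the compressions $(p_k\alpha(j)p_k)_{j=1}^n$ do not commute, so neither the joint spectral calculus nor the block-diagonalisation of Appendix~\ref{app single operator} (which works precisely because a single self-adjoint operator is trivially a commuting $1$-tuple) is available. Your two suggested remedies are not proofs. ``Absorbing the mismatch into the commutator budget'' would require a quantitative almost-commuting-to-commuting result in finite von Neumann algebras with $E(\M)$-norm control uniform in $\tau(p_k)$; no such result is known in this generality, and the matrix analogue (Lin's theorem) already fails to give dimension-free bounds for $n\ge 3$. The ``interlacing with polar-decomposition truncation'' is too vague: for fixed $k$ the left supports $\mathfrak{l}(p_kE_l)$ are all $\le p_k$ but not mutually orthogonal, and for fixed $l$ the right supports are $\le E_l$ but then the resulting family is not $\tau$-finite without further work, nor is the commutator $\|[\cdot,\alpha]\|_{E(\M)}$ visibly controlled. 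The actual proof in \cite{BSZZ2024} must organise the construction so that the blocks are carved out by projections that \emph{already} commute with $\alpha$ (coming from $e^\alpha$) rather than by the $p_k$; your outline does not do this, and until it does the argument does not close.
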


\section{Generating projections}\label{construction sec}

In this section we introduce an important ingredient in the proof of Theorem \ref{thm_main} --- generating projections for a given $n$-tuple $\alpha\in ({\rm Aff(\M)})^n$. It was introduced in \cite[Definition 6.1]{PSW2002} (for the case $n=1$ and the special case of semifinite von Neumann algebra equipped with a faithful normal semifinite trace).
\begin{definition}\label{generating proj def} Let $\M$ be a von Neumann algebra and $n\in \mathbb{N}$. A projection $q\in\M$ is called a {\em generating projection} for a commuting self-adjoint $n$-tuple $\alpha\in({\rm Aff}(\M))^n$ if
$$\bigvee_{B\in \mathfrak{B}(\mathbb{R}^n)}\mathfrak{l}(e^{\alpha}(B)q)=\mathbf{1},$$
where $\mathfrak{B}(\mathbb R^n)$ is the collection of all Borel subsets in $\mathbb{R}^n.$
\end{definition}

The proof of the following lemma is straightforward and thus omitted.
\begin{lemma}\label{left support trivial lemma} Let $\M$ be a von Neumann algebra and let $p_1,p_2,q\in \M$ be projections. If $p_1$ is orthogonal to $p_2,$ then
\begin{equation} \mathfrak{l}((p_1+p_2)q)\le \mathfrak{l}(p_1q)+\mathfrak{l}(p_2q).
\end{equation}
\end{lemma}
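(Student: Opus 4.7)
The plan is to leverage the observation that orthogonality of $p_1$ and $p_2$ forces orthogonality of the corresponding left supports, which in turn makes the right-hand side a genuine projection and reduces the whole statement to an elementary containment of ranges.

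First I would note that, since $p_1 p_2=0$ and $p_i q$ has its image in the range of $p_i$, we have $\mathfrak{l}(p_1 q)\le p_1$ and $\mathfrak{l}(p_2 q)\le p_2$. Hence $\mathfrak{l}(p_1 q)\mathfrak{l}(p_2 q)=0$, so $p:=\mathfrak{l}(p_1 q)+\mathfrak{l}(p_2 q)$ is itself a projection in $\M$, and the inequality in the lemma is a genuine inequality between projections.

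Next I would check the range containment. For every $\xi\in H$,
\[
(p_1+p_2)q\,\xi \;=\; p_1 q\,\xi + p_2 q\,\xi,
\]
and $p_i q\,\xi$ lies in the range of $\mathfrak{l}(p_i q)$ for $i=1,2$. Because $\mathfrak{l}(p_1 q)$ and $\mathfrak{l}(p_2 q)$ are orthogonal, their ranges form an algebraic (and topological) direct sum equal to the range of $p$. Therefore $(p_1+p_2)q\,\xi\in p(H)$, and taking closure yields $\overline{(p_1+p_2)q(H)}\subseteq p(H)$. By the defining property of $\mathfrak{l}$ as the projection onto the closure of the range, this gives $\mathfrak{l}((p_1+p_2)q)\le p=\mathfrak{l}(p_1 q)+\mathfrak{l}(p_2 q)$, as required.

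There is essentially no obstacle here; the only point that needs a moment's care is confirming that $\mathfrak{l}(p_1 q)+\mathfrak{l}(p_2 q)$ is a projection, which is exactly what the orthogonality hypothesis $p_1 p_2 = 0$ buys us via the bounds $\mathfrak{l}(p_i q)\le p_i$. Everything else is a direct computation with ranges. This is why the authors consider the proof straightforward and omit it.
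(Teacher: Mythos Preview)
Your argument is correct and is precisely the natural direct verification one would expect; the paper itself omits the proof as ``straightforward,'' and your observation that $\mathfrak{l}(p_iq)\le p_i$ forces the right-hand side to be a projection, followed by the obvious range containment, is exactly the intended reasoning.
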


We say that a subset $Q\subset \mathbb{R}^n$ is a \emph{box} if 
$$Q=I_1\times I_2\times\cdots\times I_n,$$
where each $I_j,$ $1\leq j\leq n,$ is a (closed, open or half-closed) real interval.

\begin{lemma}\label{lemma box}
If $q\in\M$ is a projection, then 
$$\bigvee\{\mathfrak{l}(e^{\alpha}(B)q):\ B\in{\rm Box}(\mathbb{R}^n) \}=\bigvee\{\mathfrak{l}(e^{\alpha}(B)q):\ B\in \mathfrak{B}(\mathbb{R}^n)\},$$
where ${\rm Box}(\mathbb{R}^n)$ is the collection of all boxes in $\mathbb{R}^n.$
\end{lemma}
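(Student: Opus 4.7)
The inequality $\le$ is immediate since every box is a Borel set. For the reverse direction, set
$$p := \bigvee\{\mathfrak{l}(e^{\alpha}(B)q) : B \in {\rm Box}(\mathbb{R}^n)\},$$
and consider the class
$$\mathcal{D} := \{B \in \mathfrak{B}(\mathbb{R}^n) : \mathfrak{l}(e^{\alpha}(B)q) \le p\}.$$
The plan is to establish $\mathcal{D} = \mathfrak{B}(\mathbb{R}^n)$ via the monotone class theorem applied to the algebra $\mathcal{A}$ of all finite disjoint unions of boxes. The collection $\mathcal{A}$ is a genuine algebra generating $\mathfrak{B}(\mathbb{R}^n)$: the complement of a box decomposes as a finite disjoint union of boxes (split $\mathbb{R}^n$ along the coordinate hyperplanes bounding the box), and every open set in $\mathbb{R}^n$ is a countable union of open boxes.

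First I would verify that $\mathcal{A} \subset \mathcal{D}$. For pairwise disjoint boxes $B_1, \dots, B_N$ and $B = \bigsqcup_{i=1}^N B_i$, the spectral projections are mutually orthogonal, so $e^{\alpha}(B)q = \sum_{i=1}^N e^{\alpha}(B_i) q$ as a strong sum. Since each summand $e^{\alpha}(B_i)q\xi$ lies in the closed subspace $p(H)$ by the very definition of $p$, so does $e^{\alpha}(B)q\xi$ for every $\xi \in H$. Consequently $\mathfrak{l}(e^{\alpha}(B)q) \le p$, i.e.\ $B \in \mathcal{D}$. (No iteration of Lemma \ref{left support trivial lemma} is needed here, but one could equally well run that route.)

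Next I would check that $\mathcal{D}$ is a monotone class. Suppose $\{B_k\} \subset \mathcal{D}$ with $B_k \uparrow B$ or $B_k \downarrow B$. Then $e^{\alpha}(B_k) \to e^{\alpha}(B)$ in the strong operator topology, and for each $\xi \in H$ the sequence $e^{\alpha}(B_k)q\xi \in p(H)$ converges to $e^{\alpha}(B)q\xi$; since $p(H)$ is closed, the limit lies there as well. Hence $\mathfrak{l}(e^{\alpha}(B)q) \le p$ and $B \in \mathcal{D}$. By the monotone class theorem, $\mathcal{D} \supset \sigma(\mathcal{A}) = \mathfrak{B}(\mathbb{R}^n)$, and the lemma follows. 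The one delicate point to navigate is that a direct $\pi$-$\lambda$ argument on boxes would fail: $\mathcal{D}$ is not obviously closed under complementation, whereas closure under \emph{disjoint} finite unions is automatic from the orthogonality of the spectral projections. Passing to $\mathcal{A}$ sidesteps this obstacle entirely.
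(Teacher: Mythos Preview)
Your proof is correct and follows essentially the same route as the paper: both define the class of Borel sets $B$ satisfying $\mathfrak{l}(e^{\alpha}(B)q)\le p$ (the paper writes this as $pe^{\alpha}(B)q=e^{\alpha}(B)q$), verify that finite disjoint unions of boxes lie in it, check it is a monotone class, and invoke the Monotone Class Lemma. Your write-up is slightly more explicit about why the elementary sets form an algebra, but the argument is the same.
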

\begin{proof} 
Denote the left-hand side by $p.$  Let 
$$\mathcal{A}=\{B\in \mathfrak{B}(\mathbb{R}^n):\ pe^{\alpha}(B)q=e^{\alpha}(B)q\}.$$

If $B$ is a box in $\mathbb{R}^n$, then $B\in\mathcal{A}.$ If $B$ is an elementary set, i.e. $B=\cup_{k=1}^mB_k,$ where $\{B_k\}_{k=1}^m$ are pairwise disjoint boxes, then
$$e^{\alpha}(B)=\sum_{k=1}^me^{\alpha}(B_k),$$
$$pe^{\alpha}(B)q=\sum_{k=1}^mpe^{\alpha}(B_k)q=\sum_{k=1}^me^{\alpha}(B_k)q=e^{\alpha}(B)q.$$
Thus, $B\in\mathcal{A}.$ Consequently, every elementary set is in $\mathcal{A}.$
	
Now assume that $\{B_m\}_{m\geq0}\subset\mathcal{A}$ is an increasing (respectively, decreasing) sequence and let $B=\cup_{m\geq0}B_m$ (respectively, $B=\cap_{m\geq0}B_m$). Then $e^{\alpha}(B_m)\to  e^{\alpha}(B)$ in the strong operator topology as $m\to\infty$. Hence,
$$pe^{\alpha}(B)q=\lim_{m\to\infty}pe^{\alpha}(B_m)q=\lim_{m\to\infty}e^{\alpha}(B_m)q=e^{\alpha}(B)q.$$
In either case, $B\in\mathcal{A}.$ This shows that $\mathcal{A}$ is a monotone class (see definition in p.65 of \cite{Folland}).

Thus, $\mathcal{A}$ is a monotone class containing every elementary set.
By Monotone Class Lemma (see e.g. Lemma 2.35 in \cite{Folland}), $\mathcal{A}$ coincides with $\mathfrak{B}(\mathbb{R}^n)$. In other words, we have
$$pe^{\alpha}(B)q=e^{\alpha}(B)q,\quad B\in\mathfrak{B}(\mathbb{R}^n).$$
Equivalently,
$$\mathfrak{l}(e^{\alpha}(B)q)\leq p,\quad B\in\mathfrak{B}(\mathbb{R}^n).$$
Thus,
$$\bigvee\{\mathfrak{l}(e^{\alpha}(B)q):\ B\in \mathfrak{B}(\mathbb{R}^n)\}\leq p.$$
The converse inequality is obvious.
\end{proof}

For every $m\in \mathbb{Z}_+$, let ${\rm At}_m$ be the collection of all cubes
$$\Big[\frac{k_1}{2^m},\frac{k_1+1}{2^m}\Big)\times\cdots\times\Big[\frac{k_n}{2^m},\frac{k_n+1}{2^m}\Big),\quad (k_1,\ldots,k_n)\in \mathbb{Z}^n.$$

\begin{lemma}\label{corrected PSW lemma} Let $\alpha\in ({\rm Aff}(\M))^n$ be a commuting self-adjoint $n$-tuple. If $q$ is a generating projection for $\alpha$, then
$$\bigvee_{m\geq0}\bigvee_{B\in {\rm At}_m}\mathfrak{l}(e^{\alpha}(B)q)=\mathbf{1}.$$
\end{lemma}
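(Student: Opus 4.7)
\emph{Plan.} Set $p:=\bigvee_{m\geq0}\bigvee_{B\in{\rm At}_m}\mathfrak{l}(e^{\alpha}(B)q)$; the goal is $p=\mathbf{1}$. Since $q$ is a generating projection, it suffices to prove that the class
\[\mathcal{A}:=\{B\in\mathfrak{B}(\mathbb{R}^n):\mathfrak{l}(e^{\alpha}(B)q)\leq p\}\]
coincides with all of $\mathfrak{B}(\mathbb{R}^n)$. My strategy is to verify that $\mathcal{A}$ is a Dynkin ($\lambda$-)system containing the $\pi$-system of all dyadic cubes $\bigcup_{m\geq 0}{\rm At}_m$, and then invoke Dynkin's $\pi$-$\lambda$ theorem; equivalently, one may imitate the monotone class argument from the proof of Lemma~\ref{lemma box}. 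This $\pi$-system generates $\mathfrak{B}(\mathbb{R}^n)$, and it is indeed a $\pi$-system because any two dyadic cubes are either disjoint or one is nested in the other.

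The key preliminary identity is $pq=q$. Since the cubes in ${\rm At}_0$ partition $\mathbb{R}^n$, countable additivity of the spectral measure gives $q=\sum_{B\in{\rm At}_0}e^{\alpha}(B)q$ in the strong operator topology, and each summand has left support $\leq p$ by the very definition of $p$. Left multiplication by the bounded operator $p$ is SOT-continuous, so summing yields $pq=q$. Membership of every dyadic cube in $\mathcal{A}$ is immediate from the definition of $p$, and closure of $\mathcal{A}$ under countable disjoint unions follows from countable additivity of $e^{\alpha}$ combined with SOT-continuity of $x\mapsto px$, exactly as in the proof of Lemma~\ref{lemma box}.

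The one genuinely non-trivial step is closure under complements: given $B\in\mathcal{A}$, I compute
\[pe^{\alpha}(B^c)q=p(q-e^{\alpha}(B)q)=pq-pe^{\alpha}(B)q=q-e^{\alpha}(B)q=e^{\alpha}(B^c)q,\]
using both $pq=q$ and $pe^{\alpha}(B)q=e^{\alpha}(B)q$. With all Dynkin conditions in place, the $\pi$-$\lambda$ theorem forces $\mathcal{A}=\mathfrak{B}(\mathbb{R}^n)$. Taking the join over $B\in\mathfrak{B}(\mathbb{R}^n)$ and invoking the generating property of $q$ then yields $\mathbf{1}=\bigvee_{B}\mathfrak{l}(e^{\alpha}(B)q)\leq p$, whence $p=\mathbf{1}$. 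The main (and rather mild) obstacle is noticing that closure of $\mathcal{A}$ under complements is not formal but collapses onto the single identity $pq=q$; that identity is in turn exactly the content of the single-scale partition provided by ${\rm At}_0$, which is why the statement is phrased in terms of dyadic cubes rather than arbitrary boxes.
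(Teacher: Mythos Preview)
Your argument is correct. The only cosmetic loose end is that the collection $\bigcup_{m\geq 0}{\rm At}_m$ is not literally a $\pi$-system: two disjoint dyadic cubes intersect in $\varnothing$, which is not itself a dyadic cube. Adjoining $\varnothing$ fixes this harmlessly, since $\mathfrak{l}(e^{\alpha}(\varnothing)q)=0\leq p$, and the enlarged family still generates $\mathfrak{B}(\mathbb{R}^n)$.

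Your route is genuinely different from the paper's. The paper does not verify closure of $\mathcal{A}$ under complements and does not use $\pi$--$\lambda$. Instead it argues in stages: from $(\mathbf{1}-p)e^{\alpha}(B)q=0$ for $B\in{\rm At}_m$ it passes first to all $B$ in the $\sigma$-algebra $\mathbf{F}_m$ generated by ${\rm At}_m$ (by $\sigma$-additivity), then to open boxes (approximating from inside by elements of $\bigcup_m\mathbf{F}_m$), then to arbitrary boxes (approximating from outside by open boxes), and finally invokes Lemma~\ref{lemma box}, which itself is a monotone class argument, to reach all Borel sets. Your approach compresses all of this into a single application of Dynkin's theorem and, notably, bypasses Lemma~\ref{lemma box} entirely; the price is isolating the identity $pq=q$, which is exactly what makes the complement step go through. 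The paper's approach is more hands-on and reuses machinery already built; yours is shorter and more self-contained.
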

\begin{proof} Denote the projection on the left-hand side by $p.$ We have $(\mathbf{1}-p)\perp \mathfrak{l}(e^{\alpha}(B)q)$ for every $B\in {\rm At}_m$ for every $m\geq0.$ In other words,
\begin{equation}\label{perp to cubes}
(\mathbf{1}-p)e^{\alpha}(B)q=0,\quad B\in {\rm At}_m,\quad m\geq0.
\end{equation}

Let $\mathbf{F}_m$ be the $\sigma$-algebra generated by ${\rm At}_m.$ Note that $\mathbf{F}_m$ is nothing but the collection of all countable unions of pairwise disjoint elements in ${\rm At}_m.$ Thus, by \eqref{perp to cubes} and the $\sigma$-additivity of the spectral measure, we have
$$(\mathbf{1}-p)e^{\alpha}(B)q=0,\quad B\in\mathbf{F}_m,\quad m\geq0.$$
In other words,
$$(\mathbf{1}-p)e^{\alpha}(B)q=0,\quad B\in\bigcup_{m\geq0}\mathbf{F}_m.$$

Take now an arbitrary open box $B$ and select a sequence $\{B_k\}_{k\geq0}\subset \bigcup_{m\geq0}\mathbf{F}_m$ such that $B_k\uparrow B.$ We have
$$(\mathbf{1}-p)e^{\alpha}(B)q={\rm s.o.t}\text{-}\lim_{k\to\infty}(\mathbf{1}-p)e^{\alpha}(B_k)q=0,$$
where s.o.t is a shorthand for the strong operator topology.

Take now an arbitrary box $B$ and select a sequence $\{B_k\}_{k\geq0}$ of open boxes such that $B_k\downarrow B.$ We have
$$(\mathbf{1}-p)e^{\alpha}(B)q={\rm s.o.t}\text{-}\lim_{k\to\infty}(\mathbf{1}-p)e^{\alpha}(B_k)q=0.$$
In other words, $\mathbf{1}-p\perp \mathfrak{l}(e^{\alpha}(B)q)$ for every box $B.$ Thus,
$$(\mathbf{1}-p)\perp \bigvee\{\mathfrak{l}(e^{\alpha}(B)q):\ B\in{\rm Box}(\mathbb{R}^n) \}.$$
It follows from Lemma \ref{lemma box} that $$\bigvee\{\mathfrak{l}(e^{\alpha}(B)q):B\in{\rm Box}(\mathbb{R}^n) \}=\mathbf{1}.$$
Therefore, $p=\mathbf{1}.$
\end{proof}

In the lemma below, we prove that for any projection $q\in\M$ there is a projection $p\in\M,p\ge q,$ such that $p$ commutes with $\alpha$ and that $q$ is a generating projection of $\alpha p$ in the von Neumann algebra $p\M p.$ This together with the next lemma, allows us to reduce the proof of Theorem \ref{thm_main} to the case when $\alpha$ admits a generating $\tau$-finite projection.

\begin{lemma}\label{qe lemma} Let $\M$ be a von Neumann algebra and let $\alpha\in\M^n$ be a commuting self-adjoint $n$-tuple. For every projection $q\in\M,$ the projection
$$e_q=\bigvee_{B\in \mathfrak{B}(\mathbb{R}^n)}\mathfrak{l}(e^{\alpha}(B)q)$$
commutes with $\alpha,$ and $q$ is a generating projection for $e_q\alpha$ in $e_q\M e_q.$
\end{lemma}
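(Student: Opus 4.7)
The plan is to split the proof into two steps: first showing that the projection $e_q$ commutes with every $\alpha(j)$ (equivalently, commutes with the joint spectral measure $e^{\alpha}$), and then verifying that $q$ plays the role of a generating projection when $e_q\alpha$ is viewed inside the compressed algebra $e_q\M e_q$.

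For the commutation step, I would denote $p_B=\mathfrak{l}(e^{\alpha}(B)q)$ and work with the closed subspace
\[
V=e_q(H)=\overline{\mathrm{span}}\,\bigcup_{B\in\mathfrak{B}(\mathbb{R}^n)}e^{\alpha}(B)q(H).
\]
The essential computation is the multiplicativity of the spectral measure: for any two Borel sets $B,B'\subset\mathbb{R}^n$ and any $\eta\in H$,
\[
e^{\alpha}(B')\bigl(e^{\alpha}(B)q\eta\bigr)=e^{\alpha}(B\cap B')q\eta\in e^{\alpha}(B\cap B')q(H)\subset V.
\]
Since $e^{\alpha}(B')$ is bounded, $e^{\alpha}(B')V\subset V$. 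Hence $e_q e^{\alpha}(B')e_q=e^{\alpha}(B')e_q$, and taking adjoints and self-adjointness of both projections yields $e_q e^{\alpha}(B')=e^{\alpha}(B')e_q$. Applied to the marginals $B'=\mathbb{R}\times\cdots\times A\times\cdots\times\mathbb{R}$, this shows $e_q$ commutes with $e^{\alpha(j)}$ for each $j$, so by Definition \ref{commuting definition 1} the projection $e_q$ commutes with each $\alpha(j)$.

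For the generating condition, first note $q\le e_q$: taking $B=\mathbb{R}^n$ gives $e^{\alpha}(\mathbb{R}^n)q=q$, so $\mathfrak{l}(e^{\alpha}(\mathbb{R}^n)q)=q\le e_q$. Therefore $q\in e_q\M e_q$, which is a von Neumann algebra on the Hilbert space $e_q H$ with unit $e_q$. Since $e_q$ commutes with each $\alpha(j)$, the $n$-tuple $e_q\alpha=(\alpha(j)e_q)_{j=1}^n$ is a commuting self-adjoint $n$-tuple in $e_q\M e_q$, and its joint spectral measure in $e_q\M e_q$ is $B\mapsto e^{\alpha}(B)e_q$. The definition of a generating projection for $q$ applied to $e_q\alpha$ in $e_q\M e_q$ requires
\[
\bigvee_{B\in\mathfrak{B}(\mathbb{R}^n)}\mathfrak{l}\bigl(e^{\alpha}(B)e_q\cdot q\bigr)=e_q.
\]
Since $e_q q=q$, each $\mathfrak{l}(e^{\alpha}(B)q)$ has range inside $V=e_q(H)$, so left supports computed in $\M$ and in $e_q\M e_q$ coincide. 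The left-hand side is therefore $\bigvee_B \mathfrak{l}(e^{\alpha}(B)q)$, which equals $e_q$ by the very definition of $e_q$.

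No step looks genuinely difficult; the only point requiring care is the book-keeping between left supports taken in $\M$ and in the compression $e_q\M e_q$, and the correct identification of the spectral measure of the compressed tuple $e_q\alpha$, both of which are handled by the invariance argument above.
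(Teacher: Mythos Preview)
Your proposal is correct and follows essentially the same approach as the paper: both arguments identify $e_q(H)$ as the closed span of the ranges $e^{\alpha}(B)q(H)$, use multiplicativity of the joint spectral measure to show this subspace is invariant under each $e^{\alpha}(B')$, deduce commutation by taking adjoints, and then verify the generating condition by identifying the spectral measure of the compressed tuple as $B\mapsto e^{\alpha}(B)e_q$ together with $e_qq=q$. Your explicit remark that left supports computed in $\M$ and in $e_q\M e_q$ coincide is a point the paper handles implicitly via the Hilbert-space description of the supremum of projections.
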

\begin{proof} We claim that $e_q$ commutes with $\alpha.$ Indeed, from the definition of the supremum of a family of projections, we have (see e.g. \cite[p. 111]{Kadison1997})
$$e_q(H)=\overline{\sum_{B\in \mathfrak{B}(\mathbb{R}^n)}e^{\alpha}(B)q(H)}.$$
Here (and below in the proof) the symbol $\sum_{i\in I} X_i$, where $\{X_i\}_{i\in I}$ is a family of linear subsets in $H$,  denotes the set of all elements of the form $\sum_{i\in I} x_i$ where $x_i\in X_i$ and only finitely many $x_i$'s are non-zero.
 
Since $e^{\alpha}(\mathbb{R}^n)=\mathbf{1}$, it follows that $q(H)\subset e_q(H)$ and thus 
\begin{equation}\label{q domimated by e_q}
q\le e_q.
\end{equation}
For every $A\in \mathfrak{B}(\mathbb{R}^n),$ we have
$$e^{\alpha}(A)e_q(H)\subset \overline{\sum_{B\in \mathfrak{B}(\mathbb{R}^n)}e^{\alpha}(A)e^{\alpha}(B)q(H)}=\overline{\sum_{B\in\mathfrak{B}(\mathbb{R}^n)}e^{\alpha}(A\cap B)q(H)}\subset e_q(H).$$
Hence,
$$\mathfrak{l}(e^{\alpha}(A)e_q)\leq e_q.$$
In other words,
$$e^{\alpha}(A)e_q=e_qe^{\alpha}(A)e_q.$$
Taking adjoints, we obtain
$$e_qe^{\alpha}(A)=e_qe^{\alpha}(A)e_q.$$
Thus, $e_q$ commutes with $e^{\alpha}(A)$ for all $A\in \mathfrak{B}(\mathbb{R}^n).$ Hence, $e_q$ commutes with $\alpha.$

Set $\mathcal{N}=e_q\M e_q$ with the identity element ${\bf 1}_{\mathcal{N}}=e_q$. Denote by $e^{\alpha e_q}_{\mathcal{N}}$ the spectral measure of $\alpha e_q$ in $\mathcal{N}.$ It is not hard to verify that 
$$e^{\alpha e_q}_{\mathcal{N}}(B)=e^{\alpha}(B)e_q,\quad B\in \mathfrak{B}(\mathbb{R}^n).$$  
We have
$$\overline{\sum_{B\in \mathfrak{B}(\mathbb{R}^n)}e^{\alpha e_q}_{\mathcal{N}}(B)q(H)}=\overline{\sum_{B\in \mathfrak{B}(\mathbb{R}^n)}e^{\alpha}(B)e_qq(H)}\overset{\eqref{q domimated by e_q}}{=}\overline{\sum_{B\in \mathfrak{B}(\mathbb{R}^n)}e^{\alpha}(B)q(H)}=e_q(H).$$ 
This implies that
$$\bigvee_{B\in \mathfrak{B}(\mathbb{R}^n)}\mathfrak{l}(e_{\mathcal{N}}^{\alpha e_q}(B)q)={\bf 1}_{\mathcal{N}}.$$
Therefore, $q$ is a generating projection for $\alpha e_q$ in $\mathcal{N}$ (see Definition \ref{generating proj def}).
%
\end{proof}

\begin{lemma}\label{generating projections sequence lemma} Let $n\in\mathbb{N}$ and let $\M$ be a $\sigma$-finite von Neumann algebra with a normal semifinite faithful trace $\tau.$ Let $\alpha\in\M^n$ be a commuting self-adjoint $n$-tuple. Define the mapping $q\mapsto e_q$ on the lattice $\mathcal{P}(\M)$ as in Lemma \ref{qe lemma}. There exists a sequence $\{q_k\}_{k\ge1}$ of pairwise orthogonal $\tau$-finite projections in $\M$ such that the projections $\{e_{q_k}\}_{k\ge1}$ are pairwise orthogonal and $\sum_{k\ge 1} e_{q_k}=\mathbf{1}.$
\end{lemma}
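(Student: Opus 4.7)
The plan is to construct the family $\{q_k\}$ by a standard Zorn/maximality argument, using $\sigma$-finiteness of $\M$ to force countability and semifiniteness of $\tau$ combined with Lemma \ref{qe lemma} to force the sum $\sum_k e_{q_k}$ to equal $\mathbf{1}$.

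First I would consider the collection $\mathfrak{C}$ of all families $\{q_\lambda\}_{\lambda\in\Lambda}$ of pairwise orthogonal nonzero $\tau$-finite projections in $\M$ such that the associated projections $\{e_{q_\lambda}\}_{\lambda\in\Lambda}$ are also pairwise orthogonal. Since $\mathcal{F}(\M)\neq\{0\}$ by semifiniteness of $\tau$, the collection $\mathfrak{C}$ is nonempty (any single nonzero $\tau$-finite projection gives an element). Ordering $\mathfrak{C}$ by inclusion, every chain has the union as an upper bound in $\mathfrak{C}$, so by Zorn's lemma there exists a maximal family $\{q_\lambda\}_{\lambda\in\Lambda}\in\mathfrak{C}$. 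Because each $e_{q_\lambda}\ge q_\lambda\neq 0$ and the $e_{q_\lambda}$ are pairwise orthogonal, the $\sigma$-finiteness of $\M$ forces $\Lambda$ to be at most countable; re-indexing, we obtain a sequence $\{q_k\}_{k\ge 1}$ of pairwise orthogonal $\tau$-finite projections with $\{e_{q_k}\}_{k\ge 1}$ pairwise orthogonal.

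The substantive step is to show that maximality implies $\sum_{k\ge 1}e_{q_k}=\mathbf{1}$. Set $p:=\mathbf{1}-\sum_{k\ge 1}e_{q_k}$ and suppose for contradiction that $p\neq 0$. By Lemma \ref{qe lemma}, each $e_{q_k}$ commutes with $\alpha$, hence so does $p$, i.e.\ $p$ commutes with every spectral projection $e^\alpha(B)$. Since $\tau$ is semifinite, there exists a nonzero $\tau$-finite projection $q\in\M$ with $q\le p$. I would then observe the following key orthogonality: for every $B\in\mathfrak{B}(\mathbb{R}^n)$,
\[
e^\alpha(B)q=e^\alpha(B)pq=p\,e^\alpha(B)q,
\]
so the range of $e^\alpha(B)q$ is contained in $p(H)$; consequently $\mathfrak{l}(e^\alpha(B)q)\le p$, and taking suprema gives $e_q\le p$. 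In particular $e_q$ is orthogonal to each $e_{q_k}$, and since $q\le p$ is orthogonal to each $e_{q_k}\ge q_k$, the enlarged family $\{q_k\}\cup\{q\}$ still belongs to $\mathfrak{C}$, contradicting maximality.

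Hence $p=0$, i.e.\ $\sum_{k\ge 1}e_{q_k}=\mathbf{1}$, which completes the proof. The only delicate point is verifying that $e_q\le p$ when $q\le p$ and $p$ commutes with $\alpha$; this is the geometric reason why the $e_{q_k}$ can be kept orthogonal during the construction, and it is a direct consequence of Lemma \ref{qe lemma}. The rest is a routine Zorn/$\sigma$-finiteness argument.
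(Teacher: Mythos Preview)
Your proposal is correct and follows essentially the same route as the paper's proof: a Zorn's lemma argument to obtain a maximal family, the key computation $e^{\alpha}(B)q=pe^{\alpha}(B)q$ (using that $p$ commutes with $\alpha$) to show $e_q\le p$ and derive a contradiction, and finally $\sigma$-finiteness to reduce to a sequence. The only cosmetic difference is that you additionally require the $q_\lambda$ themselves to be pairwise orthogonal in the definition of $\mathfrak{C}$, which is harmless and in fact automatic since $q_\lambda\le e_{q_\lambda}$.
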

\begin{proof} By Zorn's Lemma, there exists a maximal family of non-zero $\tau$-finite projections $\{q_i\}_{i\in \mathbb{I}}$ in $\M$ such that the projections $\{e_{q_i}\}_{i\in I}$ are pairwise orthogonal.

We claim that $\sum_{i\in \mathbb{I}} e_{q_i}=\mathbf{1}$. Otherwise, the projection $p=\mathbf{1}-\sum_{i\in \mathbb{I}} e_{q_i}\ne 0$. Choose a non-zero $\tau$-finite projection $f\leq p.$ Since each $e_{q_i}$ commutes with $\alpha$ (see Lemma \ref{qe lemma}), it follows that $p$ commutes with $\alpha.$ In particular, $pe^{\alpha}(B)p=e^{\alpha}(B)p$ for every $B\in\mathfrak{B}(\mathbb{R}^n).$ Multiplying on the right by $f,$ we obtain $pe^{\alpha}(B)f=e^{\alpha}(B)f$ for every $B\in \mathfrak{B}(\mathbb{R}^n).$ Thus, $\mathfrak{l}(e^{\alpha}(B)f)\leq p$ for every $B\in\mathfrak{B}(\mathbb{R}^n).$ Hence, $e_f\leq p.$ Then $e_f$ is orthogonal to $e_{q_i}$ for every $i\in \mathbb{I}$, which delivers a contradiction with the maximality of the family $\{q_i\}_{i\in \mathbb{I}}$. Hence, $\sum_{i\in \mathbb{I}} e_{q_i}=\mathbf{1}$.

Since $\M$ is $\sigma$-finite, it follows that $\mathbb{I}$ is countable. Thus, we get a sequence $\{q_k\}_{k\ge1}$ of $\tau$-finite projections in $\M$ such that the projections $\{e_{q_k}\}_{k\ge1}$ are pairwise orthogonal and $\sum_{k\ge 1} e_{q_k}=\mathbf{1}.$
\end{proof}

\section{Kuroda's theorem for $n$-tuples modulo Lorentz spaces in semifinite von Neumann algebras}\label{lorentz sec}

The following theorem is an important ingredient in the proof of Theorem \ref{thm_main}. It is a special case of Theorem \ref{thm_main} for Lorentz operator spaces $\Lambda_{\psi}(\M).$ Its proof is based on the construction from Section \ref{construction sec}. We prove it in the end of this section.

\begin{theorem}\label{kuroda thm lorentz}
Let $\mathcal{M}$ be a $\sigma$-finite von Neumann algebra with a faithful normal semifinite trace $\tau$. If $\Lambda_{\psi}\cap L_\infty\not\subset L_{n,1},$  then for every commuting self-adjoint $n$-tuple $\alpha\in\M^n$ and every $\e>0,$ there exists a diagonal $n$-tuple $\delta\in\M^n$ such that $$\|\alpha-\delta\|_{(\Lambda_{\psi}\cap L_\infty)(\M)}<\e.$$
\end{theorem}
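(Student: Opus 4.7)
The plan is to derive Theorem \ref{kuroda thm lorentz} from Theorem \ref{thm_1.2_BSZZ2024}: it suffices to prove $k_{\Lambda_\psi(\M)}(\alpha)=0$, or equivalently (by the remark following Definition \ref{approx unit}) to exhibit a quasicentral approximate unit $\{p_m\}$ for $\alpha$ relative to $\Lambda_\psi(\M)$. The first step is to reduce to the situation where $\alpha$ admits a generating $\tau$-finite projection. Apply Lemma \ref{generating projections sequence lemma} to obtain pairwise orthogonal projections $\{e_{q_k}\}$, each commuting with $\alpha$, such that $\sum_{k\ge1}e_{q_k}=\mathbf{1}$ and $q_k$ is a generating projection for $\alpha e_{q_k}$ in $e_{q_k}\M e_{q_k}$. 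If the theorem holds in each corner with tolerance $\e\cdot 2^{-k}$, the resulting diagonal $\delta_k$'s may be summed to produce a diagonal $n$-tuple $\delta$ in $\M$; orthogonality of the supports lets us concatenate singular value functions, giving $\|\alpha-\delta\|_{L_\infty}=\sup_k\|\alpha e_{q_k}-\delta_k\|_{L_\infty}<\e$ and $\|\alpha-\delta\|_{\Lambda_\psi}\le\sum_k\|\alpha e_{q_k}-\delta_k\|_{\Lambda_\psi}<\e$.

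For the case of a generating $\tau$-finite projection $q$, I would construct $p_m$ explicitly from the dyadic atoms ${\rm At}_m$ of side $2^{-m}$. Set
$$f_B:=\mathfrak{l}(e^{\alpha}(B)q)\ \ (B\in{\rm At}_m),\qquad p_m:=\sum_{B\in {\rm At}_m}f_B.$$
Disjointness of the boxes forces pairwise orthogonality of the $f_B$ (since $e^\alpha(B)e^\alpha(B')=0$ when $B\ne B'$), so $p_m$ is a projection; Lemma \ref{corrected PSW lemma} gives $p_m\uparrow\mathbf{1}$ strongly, and $p_m\in\mathcal{F}_1^+(\M)$ because $\tau(f_B)\le\tau(q)$ and $\alpha$ bounded implies only finitely many $f_B$ are nonzero. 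Using $f_B\le e^\alpha(B)$ and the fact that on the range of $e^\alpha(B)$ the operator $\alpha(j)$ differs from the centre coordinate $\lambda_j^B$ by at most $2^{-m-1}$ in norm, a direct calculation gives
$$[f_B,\alpha(j)]=[f_B,(\alpha(j)-\lambda^B_j)e^\alpha(B)],\qquad \|[f_B,\alpha(j)]\|_{\infty}\le 2^{-m},$$
with both left and right supports dominated by $e^\alpha(B)$. Hence the commutators $[f_B,\alpha(j)]$ are mutually orthogonal as $B$ varies over ${\rm At}_m$, each has rank at most $2\tau(f_B)\le 2\tau(q)$, and only $N_m=O(2^{mn})$ boxes contribute. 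Consequently the singular value function of $[p_m,\alpha(j)]$ is dominated by $2^{-m}\chi_{[0,C\tau(q)2^{mn})}$.

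The main obstacle, and the only point at which the hypothesis on $\psi$ enters, is to conclude that $\|[p_m,\alpha(j)]\|_{\Lambda_\psi(\M)}\to 0$. From the above bound,
$$\|[p_m,\alpha(j)]\|_{\Lambda_\psi(\M)}\le 2^{-m}\psi\bigl(C\tau(q)\cdot 2^{mn}\bigr).$$
The hypothesis $\Lambda_\psi\cap L_\infty\not\subset L_{n,1}$ should, via comparison of fundamental functions (an analogue of Lemma \ref{lemma in SS} with $L_1$ replaced by $L_{n,1}$), translate into $\liminf_{t\to\infty}\psi(t)t^{-1/n}=0$. Along a suitable subsequence $m_\ell\to\infty$ the right-hand side tends to $0$; combined with $\|[p_{m_\ell},\alpha(j)]\|_\infty\le 2^{-m_\ell}\to0$, this makes $\{p_{m_\ell}\}$ a quasicentral approximate unit for $\alpha$ relative to $\Lambda_\psi(\M)$, so $k_{\Lambda_\psi(\M)}(\alpha)=0$. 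Theorem \ref{thm_1.2_BSZZ2024} applied with $E=\Lambda_\psi$ then produces the desired $\delta$. I expect the translation of the embedding hypothesis into the asymptotic decay property of $\psi$ to require the most care; the orthogonality and rank bookkeeping in the commutator decomposition is routine once the geometric picture is in place.
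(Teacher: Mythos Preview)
Your proposal is correct and follows essentially the same route as the paper: reduce via Lemma \ref{generating projections sequence lemma} to corners with a generating $\tau$-finite projection, build $p_m$ from dyadic atoms exactly as in Lemma \ref{proj co}, translate the embedding hypothesis into $\liminf_{t\to\infty}\psi(t)t^{-1/n}=0$ (this is Lemma \ref{conditions on psi}), and then conclude $k_{\Lambda_\psi(\M)}(\alpha)=0$ and invoke Theorem \ref{thm_1.2_BSZZ2024}. One small wrinkle you skip: the paper flags that the passage from ``quasicentral approximate unit exists'' to ``$k_{\Lambda_\psi(\M)}(\alpha)=0$'' via \cite[Theorem 3.6]{BSZZ2023} uses $\Lambda_\psi\not\subset L_\infty$, and disposes of the remaining case by replacing $\Lambda_\psi$ with $\Lambda_\psi+L_1=\Lambda_{\psi_1}$, which leaves $\Lambda_\psi\cap L_\infty$ unchanged.
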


Theorem \ref{kuroda thm lorentz} can be seen as a generalization of \cite[Proposition 3]{Bercovici1989}.

In the lemma below, under the assumption that $\alpha$ admits a generating $\tau$-finite projection, we construct a quasicentral approximate unit $\{p_m\}_{m\ge1}$ for $\alpha$ with good properties. In the case when $n=1$, $\{p_m\}_{m\ge1}$ satisfies $\|[p_m,\alpha]\|_{E(\M)}\to0$ as $m\to\infty$, whenever $E$ is a symmetric function space on $(0,\infty)$ such that $E\not\subset L_1$, see the proof Lemma \ref{wvn pre lemma}. This recovers Lemma 6.3 in \cite{PSW2002}.

\begin{lemma}\label{proj co}  Let $\mathcal{M}$ be a $\sigma$-finite von Neumann algebra with a faithful normal semifinite trace $\tau.$ Let $\alpha\in\M^n$ be a commuting self-adjoint $n$-tuple admitting a generating $\tau$-finite projection $q$. Suppose ${\rm Spec}(\alpha(j))\subset [0,1)$ for every $1\le j\le n$. There exists a sequence of projections $\{p_m\}_{m\ge0}$, $p_m\uparrow\mathbf{1}$, such that for every $m\ge0$ and every $1\le j\le n,$
\begin{enumerate}[\rm (i)]
\item\label{point 1} $\|[p_m,\alpha(j)]\|_{\infty}\le 2^{-m};$
\item\label{point 2} $\tau(p_m)\leq 2^{mn}\tau(q).$
\end{enumerate}
\end{lemma}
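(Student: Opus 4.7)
The plan is to build $p_m$ from the dyadic spectral cubes of $\alpha$. For each $m\ge 0$ and each $B\in{\rm At}_m$, set $E_B:=e^{\alpha}(B)$ and $f_B:=\mathfrak{l}(E_Bq)$. Since ${\rm Spec}(\alpha)\subset[0,1)^n$, only the $2^{mn}$ cubes $B\in{\rm At}_m$ lying in $[0,1)^n$ give $E_B\ne 0$, and these form a pairwise orthogonal family summing to $\mathbf{1}$. Because $f_B\le E_B$, the $f_B$'s are pairwise orthogonal as well, so I define
$$p_m:=\sum_{B\in{\rm At}_m}f_B\;=\;\bigvee_{B\in{\rm At}_m}f_B.$$
Each $B\in{\rm At}_m$ is the disjoint union of finer cubes $B'\in{\rm At}_{m+1}$, so $E_Bq=\sum_{B'\subset B}E_{B'}q$, forcing $f_B\le\bigvee_{B'\subset B}f_{B'}\le p_{m+1}$. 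Thus $\{p_m\}$ is increasing, and $\bigvee_m p_m=\mathbf{1}$ follows from Lemma \ref{corrected PSW lemma}.

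For (ii), polar decomposition gives $\tau(\mathfrak{l}(E_Bq))=\tau(\mathfrak{r}(E_Bq))$, and $\mathfrak{r}(E_Bq)\le q$ because $(E_Bq)q=E_Bq$. Hence $\tau(f_B)\le\tau(q)$, and summing over the at most $2^{mn}$ nonzero cubes yields $\tau(p_m)\le 2^{mn}\tau(q)$.

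For (i), I would introduce the midpoint discretization of $\alpha(j)$. Writing $B=\prod_{j=1}^n[k_j^B/2^m,(k_j^B+1)/2^m)$, set $\lambda_j^B:=(2k_j^B+1)/2^{m+1}$ and
$$\beta(j):=\sum_{B\in{\rm At}_m}\lambda_j^B E_B\in\M.$$
Then $\alpha(j)-\beta(j)$ commutes with each $E_B$, and on $E_BH$ it has norm at most $2^{-m-1}$. Since $f_B\le E_B$, $\beta(j)$ commutes with every $f_B$ and hence with $p_m$, so $[p_m,\alpha(j)]=\sum_{B}[f_B,\alpha(j)-\beta(j)]$. A short computation using $f_BE_B=f_B$ and the commutation of $E_B$ with $\alpha(j),\beta(j)$ shows that each summand $T_B:=[f_B,\alpha(j)-\beta(j)]$ satisfies $T_B=E_BT_BE_B$ and $\|T_B\|_\infty\le 2\|f_B\|_\infty\cdot 2^{-m-1}=2^{-m}$.

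The main --- indeed essentially the only --- obstacle is the final step: avoiding the factor of $2^{mn}$ that a crude triangle inequality on $\sum_B T_B$ would introduce. The resolution is that the $T_B$'s are supported both as domain and range on the pairwise orthogonal subspaces $E_BH$, and these subspaces sum to $H$; a standard direct-sum argument then gives $\bigl\|\sum_B T_B\bigr\|_\infty=\sup_B\|T_B\|_\infty\le 2^{-m}$, establishing (i).
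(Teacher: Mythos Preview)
Your proof is correct and follows essentially the same construction as the paper: the same projections $p_m=\sum_{B\in{\rm At}_m}\mathfrak{l}(e^{\alpha}(B)q)$, the same midpoint discretization $\beta(j)$ (the paper's $\alpha_m(j)$), and the same reductions for monotonicity, $p_m\uparrow\mathbf{1}$, and the trace bound. The only difference is that for (i) the paper bypasses your block-diagonal argument entirely by observing that $\|\alpha(j)-\beta(j)\|_\infty\le 2^{-m-1}$ holds \emph{globally} (your own block estimates already give this, since $\alpha(j)-\beta(j)=\bigoplus_B E_B(\alpha(j)-\beta(j))E_B$), whence $\|[p_m,\alpha(j)]\|_\infty=\|[p_m,\alpha(j)-\beta(j)]\|_\infty\le 2\cdot 2^{-m-1}$ immediately.
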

\begin{proof} Slightly abusing the notation, for every $m\in \mathbb{Z}_+$, we denote the collection of all cubes $$\Big[\frac{k_1}{2^m},\frac{k_1+1}{2^m}\Big)\times\cdots\times\Big[\frac{k_n}{2^m},\frac{k_n+1}{2^m}\Big),\quad (k_1,\ldots,k_n)\in \{0,1,\ldots,2^m-1\}^n,$$
by ${\rm At}_m.$ Define the operator
\begin{equation}\label{eq_pm}
p_m=\sum_{A\in{\rm At}_m}\mathfrak{l}(e^{\alpha}(A)q).
\end{equation}
Since the elements in ${\rm At}_m$ are pairwise disjoint, it follows that the range projections $\mathfrak{l}(e^{\alpha}(A)q)$ are pairwise orthogonal for different $A$'s in ${\rm At}_m$. Hence, each $p_m$ is a projection.

{\bf Step 1:} We claim that the sequence $\{p_m\}_{m\geq0}$ is increasing.

For every $A\in{\rm At}_m,$ we have
$$A=\bigcup_{\substack{B\in{\rm At}_{m+1}\\ B\subset A}}B,$$
where the cubes on the right-hand side are pairwise disjoint. It follows from Lemma \ref{left support trivial lemma} that
$$\mathfrak{l}(e^{\alpha}(A)q)\leq \sum_{\substack{B\in{\rm At}_{m+1}\\ B\subset A}}\mathfrak{l}(e^{\alpha}(B)q)\leq p_{m+1}.$$
Taking the supremum over $A\in{\rm At}_m,$ we arrive at $p_m\leq p_{m+1}.$

{\bf Step 2:} We claim that $p_m\uparrow \mathbf{1}.$

Note that
$$\bigvee_{m\geq0}\bigvee_{A\in{\rm At}_m}\mathfrak{l}(e^{\alpha}(A)q)=\bigvee_{m\geq0}p_m.$$
Since $q$ is a generating projection, it follows from Lemma \ref{corrected PSW lemma} that the left-hand side in the above equation is $\mathbf{1}.$ This yields Step 2.

{\bf Step 3:} For every $A\in{\rm At}_m,$ let $c_A\in A$ be the centre point of $A.$ We write $c_A=(c_A(j))_{j=1}^n$ where $c_A(j)\in \mathbb{R}.$ Set $n$-tuple $\alpha_m\in \M^n$ by
$$\alpha_m(j)=\sum_{A\in{\rm At}_m}c_A(j)e^{\alpha}(A),\quad 1\leq j\leq n.$$
We have
\begin{equation}\label{simple function approx}
\|\alpha(j)-\alpha_m(j)\|_{\infty}\le 2^{-m-1},\quad 1\le j\le n.
\end{equation}
We claim that $\alpha_m(j)$ commutes with $p_m$ for every $1\le j\le n.$

Indeed, for every $A\in{\rm At}_m,$ we have
$$p_m=\sum_{B\in{\rm At}_m}\mathfrak{l}(e^{\alpha}(B)q)=\mathfrak{l}(e^{\alpha}(A)q)+\sum_{\substack{B\in{\rm At}_m\\ B\neq A}}\mathfrak{l}(e^{\alpha}(B)q).$$
Thus,
$$\mathfrak{l}(e^{\alpha}(A)q)\leq p_m\leq \mathfrak{l}(e^{\alpha}(A)q)+\sum_{\substack{B\in{\rm At}_m\\ B\neq A}}e^{\alpha}(B)=\mathfrak{l}(e^{\alpha}(A)q)+e^{\alpha}([0,1)^n\backslash A),$$
where the last equality is because $\cup_{B\in {\rm At}_m} B=[0,1)^n.$
It follows immediately that $$e^{\alpha}(A)p_m=e^{\alpha}(A)\mathfrak{l}(e^{\alpha}(A)q)=\mathfrak{l}(e^{\alpha}(A)q).$$
Taking adjoints on both sides of the equality, we obtain that $p_m$ commutes with $e^{\alpha}(A)$ for every $A\in{\rm At}_m.$ This yields the claim.

{\bf Step 4:} By Step 3, $[p_m, \alpha_m(j)]=0$ for each $1\le j\le n.$ Hence,
$$[p_m,\alpha(j)]=[p_m,\alpha(j)-\alpha_m(j)],\quad 1\leq j\leq n.$$
Thus,
$$\|[p_m,\alpha(j)]\|_{\infty}\leq 2\|\alpha(j)-\alpha_m(j)\|_{\infty}\overset{\eqref{simple function approx}}{\le} 2^{-m},\quad 1\leq j\leq n.$$
This proves the assertion \eqref{point 1}. We also have
$$\tau(p_m)=\sum_{A\in{\rm At}_m}\tau(\mathfrak{l}(e^{\alpha}(A)q))=\sum_{A\in{\rm At}_m}\tau(\mathfrak{r}(e^{\alpha}(A)q))\leq\sum_{A\in{\rm At}_m}\tau(q)=2^{mn}\tau(q).$$
This proves the assertion \eqref{point 2}.
\end{proof}

Recall that we denote the fundamental function of $E$ by $\varphi_E,$ i.e. $\varphi_E(t)=\|\chi_{(0,t)}\|_E$ for all $t\ge0.$ The following lemma is useful.

\begin{lemma}\label{fundamental function trivial lemma} Let $E$ be a symmetric function space on $(0,\infty)$. For $\theta>0,$ we have 
$$\varphi_E(\theta t)\le \max\{\theta,1\}\varphi_E(t),\quad t\ge0.$$
\end{lemma}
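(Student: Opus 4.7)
The plan is to split into two cases depending on whether $\theta \ge 1$ or $0 < \theta < 1$, and to handle $t = 0$ separately (where both sides equal $\varphi_E(0) = \|\chi_\emptyset\|_E = 0$, so the inequality is trivial). Thus fix $t > 0$.

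For $\theta \ge 1$, I would invoke Lemma \ref{decreasing lemma}, which asserts that the function $s \mapsto \varphi_E(s)/s$ is decreasing on $(0,\infty)$. Since $\theta t \ge t > 0$, this gives
\[
\frac{\varphi_E(\theta t)}{\theta t} \le \frac{\varphi_E(t)}{t},
\]
which upon rearrangement yields $\varphi_E(\theta t) \le \theta \varphi_E(t) = \max\{\theta,1\} \varphi_E(t)$, as required.

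For $0 < \theta < 1$, we have $\max\{\theta,1\} = 1$, so the target inequality reduces to $\varphi_E(\theta t) \le \varphi_E(t)$. This is just monotonicity of $\varphi_E$: since $\theta t \le t$, we have the pointwise inequality $\chi_{(0,\theta t)} \le \chi_{(0,t)}$, and the definition of a symmetric function space (Definition \ref{symmetric function space def}) then gives $\|\chi_{(0,\theta t)}\|_E \le \|\chi_{(0,t)}\|_E$, i.e.\ $\varphi_E(\theta t) \le \varphi_E(t)$.

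There is no serious obstacle here; the lemma is essentially a two-line consequence of the already-stated Lemma \ref{decreasing lemma} together with the monotonicity built into the definition of a symmetric function space. The only thing to take care of is the trivial boundary case $t=0$ and to keep the two ranges of $\theta$ correctly matched with their respective estimates.
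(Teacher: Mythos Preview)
Your proof is correct and follows essentially the same approach as the paper: both split on whether $\theta$ exceeds $1$, using monotonicity of $\varphi_E$ in one case and Lemma \ref{decreasing lemma} in the other. You add a bit more detail (the $t=0$ boundary case and an explicit justification of monotonicity from Definition \ref{symmetric function space def}), but the argument is the same.
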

\begin{proof}
If $\theta\le 1$, since $\varphi_E$ is an increasing function, it follows that $\varphi_E(\theta t)\le \varphi_E(t).$

Suppose $\theta>1.$ Noting that $t\mapsto \varphi_E(t)/t, t>0$ is a decreasing function (see Lemma \ref{decreasing lemma}), we have
$$\varphi_E(\theta t)\le \theta\varphi_E(t).$$
The assertion follows.
\end{proof}

The relation $\Lambda_{\psi}\cap L_\infty\not\subset L_{n,1}$ can be characterized in terms of the asymptotic behaviour of $\psi,$ as shown by the following lemma.

\begin{lemma}\label{conditions on psi} Let $n\in \mathbb{N}$ and let $\psi$ be an increasing concave function on $[0,\infty)$ such that $\psi(0)=0$. The following conditions are equivalent:
\begin{enumerate}[{\rm (i)}]
\item\label{cond 1} $\Lambda_{\psi}\cap L_{\infty}\not\subset L_{n,1};$
\item\label{cond 2} $\liminf_{t\to\infty}\frac{\psi(t)}{t^{\frac1n}}=0;$
\item\label{cond 3} $\liminf_{m\to\infty}\frac{\psi(2^{mn})}{2^m}=0.$
\end{enumerate}
\end{lemma}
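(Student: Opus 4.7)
My plan is to prove the three conditions equivalent via the cycle
(iii)~$\Rightarrow$~(ii)~$\Rightarrow$~(i)~$\Rightarrow$~(iii). The implication (iii)~$\Rightarrow$~(ii) is immediate, since restricting the liminf to the subsequence $t = 2^{mn}$ gives $\liminf_{t \to \infty} \psi(t)/t^{1/n} \le \liminf_{m \to \infty} \psi(2^{mn})/2^m$.

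For (ii)~$\Rightarrow$~(i), I would construct a decreasing step function $f$ witnessing $\Lambda_\psi \cap L_\infty \not\subset L_{n,1}$. From $\liminf_{t \to \infty} \psi(t)/t^{1/n}=0$ I extract a lacunary sequence $0 = s_0 < s_1 < \cdots$ with $s_k \ge 2 s_{k-1}$ and $\psi(s_k) \le 2^{-k} s_k^{1/n}$. Setting $f := \sum_{k \ge 1} s_k^{-1/n} \chi_{[s_{k-1}, s_k)}$, the bound $f \le s_1^{-1/n}$ ensures boundedness and $\mu(t;f) = f(t)$. Then
\[
\|f\|_{\Lambda_\psi} \le \sum_{k\ge 1} s_k^{-1/n} \psi(s_k) \le \sum_{k\ge 1} 2^{-k} < \infty,
\]
while
\[
\|f\|_{L_{n,1}} = \sum_{k\ge 1} s_k^{-1/n} \bigl(s_k^{1/n} - s_{k-1}^{1/n}\bigr) \ge \sum_{k\ge 1} (1 - 2^{-1/n}) = \infty,
\]
so $f \in (\Lambda_\psi \cap L_\infty) \setminus L_{n,1}$.

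For (i)~$\Rightarrow$~(iii), I argue the contrapositive. Assuming $\liminf_m \psi(2^{mn})/2^m = c > 0$, there is $M$ with $\psi(2^{mn}) \ge c\cdot 2^m$ for $m \ge M$; for any $t \ge T := 2^{Mn}$, the unique $m$ with $2^{mn} \le t < 2^{(m+1)n}$ gives $\psi(t) \ge \psi(2^{mn}) \ge c\cdot 2^m \ge (c/2)\, t^{1/n}$. Taking any $f \in \Lambda_\psi \cap L_\infty$ (WLOG a decreasing representative so $f = \mu(\cdot;f)$), I split $\|f\|_{L_{n,1}} = \int_0^T + \int_T^\infty$; the first is at most $\|f\|_\infty T^{1/n}$. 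For the tail, Stieltjes integration by parts against the positive measure $d\nu := -d\mu(\cdot;f)$ yields, for every $R > T$,
\[
\int_T^R \mu(t;f)\, d(t^{1/n}) - \tfrac{2}{c}\int_T^R \mu(t;f)\, d\psi(t) = \bigl[\mu(t;f)(t^{1/n} - \tfrac{2}{c}\psi(t))\bigr]_T^R + \int_T^R \bigl(t^{1/n} - \tfrac{2}{c}\psi(t)\bigr)\, d\nu(t).
\]
On $[T,\infty)$ the quantity $t^{1/n} - (2/c)\psi(t)$ is $\le 0$, so the $d\nu$-integral and the boundary term at $R$ are $\le 0$, and only the boundary term at $T$ survives as an upper bound of size $\mu(T;f)((2/c)\psi(T) - T^{1/n}) \le (2/c)\|f\|_\infty \psi(T)$. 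Letting $R \to \infty$ gives
\[
\|f\|_{L_{n,1}} \le \|f\|_\infty T^{1/n} + \tfrac{2}{c}\|f\|_{\Lambda_\psi} + \tfrac{2}{c}\|f\|_\infty \psi(T) < \infty,
\]
so $\Lambda_\psi \cap L_\infty \subset L_{n,1}$, contradicting (i).

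The principal obstacle is the last step: clean Stieltjes integration by parts without assuming a priori that $\mu(R;f)\psi(R) \to 0$ or $\mu(R;f)R^{1/n}\to 0$. The trick is to subtract the two integral identities first and exploit the sign of $t^{1/n}-(2/c)\psi(t)$ together with positivity of $d\nu$ and of $\mu(R;f)$, so the entire contribution from $R$ is automatically nonpositive and can be discarded when taking $R\to\infty$.
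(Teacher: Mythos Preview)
Your proof is correct, and the route is genuinely different from the paper's in two respects. First, the overall structure: the paper proves (i)$\Leftrightarrow$(ii) and (ii)$\Leftrightarrow$(iii) as two separate equivalences, whereas you close the cycle (iii)$\Rightarrow$(ii)$\Rightarrow$(i)$\Rightarrow$(iii). Second, and more substantively, for the direction ``liminf condition $\Rightarrow$ non-inclusion'' the paper argues the contrapositive: assuming $\Lambda_\psi\cap L_\infty\subset L_{n,1}$, it invokes the automatic continuity of inclusions between symmetric spaces (Fact~\ref{symm embedding lem}) and tests the resulting norm inequality on $\chi_{(0,t)}$ to force $\psi(t)\gtrsim t^{1/n}$. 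You instead build an explicit decreasing step function $f$ witnessing the non-inclusion, which is more elementary in that it avoids any appeal to closed-graph-type results. For the reverse direction, both proofs establish the same norm inequality $\|f\|_{L_{n,1}}\lesssim \|f\|_\infty+\|f\|_{\Lambda_\psi}$; the paper does this by discrete Abel summation over integer points, while you use continuous Stieltjes integration by parts against $g(t)=t^{1/n}-(2/c)\psi(t)$. These are the continuous and discrete incarnations of the same idea, and your trick of combining the two boundary terms so that the contribution at $R$ is automatically nonpositive is a clean way to avoid any a priori decay assumption on $\mu(R;f)$. One small point: when you write $\liminf_m \psi(2^{mn})/2^m = c>0$, the liminf may be $+\infty$; your argument still works verbatim once you pick any finite positive $c$ below the liminf.
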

\begin{proof} Suppose \eqref{cond 2} does not hold. Since $\liminf_{t\to\infty}\frac{\psi(t)}{t^{1/n}}>c$ for some $c>0,$ it follows that there exists large enough $s>0$ such that $\inf_{t\ge s} \frac{\psi(t)}{t^{1/n}}\ge c.$ In other words, $\psi(t)\ge ct^{1/n}$ for all $t\ge s.$ In particular, $\psi(k)\geq c'k^{\frac1n}$ for $k\in\mathbb{Z}_+,$ where $c'>0$ is a constant.

Let $x\in \Lambda_{\psi}\cap L_{\infty}.$ We have
\begin{multline*}
\|x\|_{L_{n,1}}\leq\Big\|\sum_{k=0}^\infty\mu(k;x)\chi_{[k,k+1)}\Big\|_{L_{n,1}}=\sum_{k=0}^\infty\mu(k;x)((k+1)^{\frac1n}-k^{\frac1n})\\
=\sum_{k=0}^\infty(\mu(k;x)-\mu(k+1;x))(k+1)^{\frac1n}\le \frac{1}{c'}\sum_{k=0}^\infty(\mu(k;x)-\mu(k+1;x))\psi(k+1)\\
=\frac{1}{c'}\sum_{k=0}^\infty\mu(k;x)(\psi(k+1)-\psi(k))\le \frac{1}{c'}\psi(1)\|x\|_\infty+\frac{1}{c'}\sum_{k=1}^\infty\mu(k;x)(\psi(k)-\psi(k-1))\\
\le \frac{1}{c'}\psi(1)\|x\|_{\infty}+\frac{1}{c'}\|x\|_{\Lambda_{\psi}}.
\end{multline*}
Therefore, $\Lambda_{\psi}\cap L_\infty\subset L_{n,1}.$ In other words, \eqref{cond 1} does not hold. Hence, \eqref{cond 1}$\to$\eqref{cond 2}.

Suppose \eqref{cond 1} does not hold. By Fact \ref{symm embedding lem}, there exists a constant $c\in (0,\infty)$ such that
$$\|x\|_{L_{n,1}}\le c\max\{\|x\|_{\Lambda_{\psi}},\|x\|_\infty\},\quad x\in \Lambda_{\psi}\cap L_\infty.$$
Applying this inequality to $x=\chi_{(0,t)}$, since $\int_0^\infty \chi_{(0,t)}d\psi(s)=\psi(t),$ we obtain that
$$t^{\frac1n}\le c\max\{\psi(t),1\},\quad t\in (0,\infty).$$
Thus, 
$$\liminf_{t\to\infty} \frac{\psi(t)}{t^{\frac1n}}\ge c^{-1}.$$
In other words, \eqref{cond 2} does not hold either. Thus, \eqref{cond 2}$\to$\eqref{cond 1}.

For $t\geq 1,$ set $m(t)=\lfloor\log_2(t^{\frac1n})\rfloor\in\mathbb{Z}_+.$ Here, $\lfloor x\rfloor$ denotes the integer part for $x\in \mathbb{R}_+.$ It is immediate that $$2^{m(t)}\leq t^{\frac1n}\leq 2^{m(t)+1},\quad t\geq 1.$$
Hence,
$$\frac{\psi(t)}{t^{\frac1n}}\leq\frac{\psi(2^{(m(t)+1)n})}{2^{m(t)}}\stackrel{{\rm Lemma}\ \ref{fundamental function trivial lemma}}{\le} 2^n\frac{\psi(2^{m(t)n})}{2^{m(t)}}\le 2^{n+1}\frac{\psi(t)}{t^{\frac1n}},\quad t\geq 1.$$
In particular,
$$\liminf_{t\to\infty}\frac{\psi(t)}{t^{\frac1n}}\leq 2^n\liminf_{t\to\infty}\frac{\psi(2^{m(t)n})}{2^{m(t)}}=2^n\liminf_{m\to\infty}\frac{\psi(2^{mn})}{2^{m}}
\le 2^{n+1}\liminf_{t\to\infty}\frac{\psi(t)}{t^{\frac1n}}.$$
Hence, \eqref{cond 2} is equivalent to \eqref{cond 3}.
\end{proof}

We need the following estimation of the norm $\|\cdot\|_{E(\M)}$ on $\mathcal{F}(\M)$. 
\begin{lemma}\label{symmetric norm estimation lemma} Let $\M$ be a von Neumann algebra with a faithful normal semifinite trace $\tau$. Let $E$ be a symmetric function space on $(0,\infty)$. For every $a\in \mathcal{F}(\M)$, we have
$$\|a\|_{E(\M)}\le \|a\|_\infty\varphi_E(\tau(\mathfrak{r}(a)))=\|a\|_\infty\varphi_E(\tau(\mathfrak{l}(a))).$$
\end{lemma}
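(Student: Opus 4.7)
The plan is to deduce the bound from a pointwise estimate on the singular value function of $a$. First I would observe that $\mu(t;a)$ is the non-increasing rearrangement of $|a|$, hence $\mu(t;a) \le \|a\|_\infty$ for all $t\ge 0$. Moreover, using that $\mathfrak{r}(a) = \mathfrak{r}(|a|)$, the distribution function satisfies $d(0;|a|) = \tau(e^{|a|}(0,\infty)) = \tau(\mathfrak{r}(a))$, so by the definition of $\mu$ we get $\mu(t;a)=0$ for every $t \ge \tau(\mathfrak{r}(a))$. Combining these two observations yields the pointwise inequality
$$\mu(t;a) \le \|a\|_\infty \chi_{(0,\tau(\mathfrak{r}(a)))}(t), \quad t>0.$$

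Next I would invoke the defining property of a symmetric function space (Definition \ref{symmetric function space def}): the norm $\|\cdot\|_E$ is monotone with respect to the pointwise order on non-increasing rearrangements. Applied to the above pointwise bound, this gives
$$\|a\|_{E(\M)} = \|\mu(a)\|_E \le \|a\|_\infty \bigl\|\chi_{(0,\tau(\mathfrak{r}(a)))}\bigr\|_E = \|a\|_\infty \varphi_E(\tau(\mathfrak{r}(a))),$$
which is the main inequality.

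Finally, for the equality $\varphi_E(\tau(\mathfrak{r}(a))) = \varphi_E(\tau(\mathfrak{l}(a)))$, it suffices to show $\tau(\mathfrak{r}(a)) = \tau(\mathfrak{l}(a))$. This follows from the polar decomposition $a = u|a|$ with $u$ a partial isometry affiliated with $\M$ satisfying $u^*u = \mathfrak{r}(a)$ and $uu^* = \mathfrak{l}(a)$; these projections are Murray–von Neumann equivalent in $\M$, so the trace $\tau$ assigns them the same value.

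There is no serious obstacle here: everything reduces to standard facts about the singular value function, the monotonicity axiom of symmetric spaces, and the trace invariance of equivalent projections. The only minor point worth articulating carefully is the identity $\mu(t;a)=0$ for $t \ge \tau(\mathfrak{r}(a))$, which is the reason the fundamental function evaluated at $\tau(\mathfrak{r}(a))$, rather than at $+\infty$, appears in the bound.
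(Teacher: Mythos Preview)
Your proof is correct and follows essentially the same approach as the paper's: both bound the singular value function $\mu(a)$ by $\|a\|_\infty\chi_{(0,\tau(\mathfrak{r}(a)))}$ and then apply the defining monotonicity of a symmetric function space together with the definition of the fundamental function. The only cosmetic difference is that the paper phrases the first step as the operator inequality $|a|\le\|a\|_\infty\mathfrak{r}(a)$ and then computes $\|\mathfrak{r}(a)\|_{E(\M)}=\varphi_E(\tau(\mathfrak{r}(a)))$, whereas you establish the pointwise bound on $\mu(a)$ directly from the distribution function; the content is the same.
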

\begin{proof}
Note that $\|a\|_{E(\M)}=\||a|\|_{E(\M)}$, and 
$$|a|\le \|a\|_\infty \mathfrak{r}(a).$$
So, we have $\|a\|_{E(\M)}\leq \|a\|_\infty \|\mathfrak{r}(a)\|_{E(\M)}$. Since $\mu(\mathfrak{r}(a))=\chi_{[0,\tau(\mathfrak{r}(a)))},$ by the definition of $\|\cdot\|_{E(\M)}$ we have
$$\|\mathfrak{r}(a)\|_{E(\M)} = \|\chi_{[0,\tau(\mathfrak{r}(a)))}\|_E=\varphi_E(\tau(\mathfrak{r}(a))).$$
Thus, 
$$\|a\|_{E(\M)}\leq \|a\|_\infty\varphi_E(\tau(\mathfrak{r}(a))).$$
Note that $\tau(\mathfrak{r}(a))=\tau(\mathfrak{l}(a)).$ The proof is complete.
\end{proof}

The following lemma yields Theorem \ref{kuroda thm lorentz} provided that there exists a generating $\tau$-finite projection.

\begin{lemma}\label{wvn for lorentz lemma} Suppose we are in the setting of Theorem \ref{kuroda thm lorentz}. Suppose in addition that $\alpha$ admits a generating $\tau$-finite projection $q$. For every $\e>0,$ there exists a diagonal $n$-tuple $\delta\in\M^n$ such that 
$$\|\alpha-\delta\|_{(\Lambda_{\psi}\cap L_\infty)(\M)}<\e.$$
\end{lemma}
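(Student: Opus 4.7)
The plan is to verify that $k_{\Lambda_\psi(\M)}(\alpha)=0$ and then invoke Theorem \ref{thm_1.2_BSZZ2024} with $E=\Lambda_\psi$, which immediately furnishes a diagonal $n$-tuple $\delta\in\M^n$ with $\alpha-\delta\in(\Lambda_\psi^0(\M))^n$ and $\|\alpha-\delta\|_{(\Lambda_\psi\cap L_\infty)(\M)}<\e$. Since all spectral projections (and hence the property of admitting $q$ as a generating projection) are preserved under an affine rescaling applied to each coordinate, while such a rescaling alters every commutator $[p_m,\alpha(j)]$ only by a fixed multiplicative constant, one may reduce without loss of generality to the situation ${\rm Spec}(\alpha(j))\subset[0,1)$ for every $1\le j\le n$, so that Lemma \ref{proj co} becomes directly applicable.

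Applying Lemma \ref{proj co}, I obtain a sequence of projections $p_m\uparrow\mathbf{1}$ with $\|[p_m,\alpha(j)]\|_\infty\le 2^{-m}$ and $\tau(p_m)\le 2^{mn}\tau(q)$. The crux of the estimate is to bound the left support of the commutator. From $\mathfrak{l}(p_m\alpha(j))\le p_m$, the identity $\tau(\mathfrak{l}(\alpha(j)p_m))=\tau(\mathfrak{r}(\alpha(j)p_m))\le\tau(p_m)$, and subadditivity of the trace on joins of projections, I obtain
$$\tau\bigl(\mathfrak{l}([p_m,\alpha(j)])\bigr)\le 2\tau(p_m)\le 2^{mn+1}\tau(q).$$
Recalling $\varphi_{\Lambda_\psi}=\psi$, Lemma \ref{symmetric norm estimation lemma} together with Lemma \ref{fundamental function trivial lemma} then produces a constant $C=C(\tau(q))>0$ such that
$$\|[p_m,\alpha(j)]\|_{\Lambda_\psi(\M)}\le 2^{-m}\psi\bigl(2^{mn+1}\tau(q)\bigr)\le C\cdot\frac{\psi(2^{mn})}{2^m}.$$

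By Lemma \ref{conditions on psi}, the hypothesis $\Lambda_\psi\cap L_\infty\not\subset L_{n,1}$ is equivalent to $\liminf_{m\to\infty}\psi(2^{mn})/2^m=0$. Choosing a subsequence $\{m_k\}$ along which this liminf is realised, I obtain $\|[p_{m_k},\alpha(j)]\|_{\Lambda_\psi(\M)}\to 0$ for every $1\le j\le n$, while the bound $\|[p_{m_k},\alpha(j)]\|_\infty\le 2^{-m_k}\to 0$ is automatic. Since each $p_{m_k}\in\mathcal{F}_1^+(\M)$ and $p_{m_k}\to\mathbf{1}$ in the strong operator topology, $\{p_{m_k}\}$ is a quasicentral approximate unit for $\alpha$ relative to $\Lambda_\psi(\M)$ in the sense of Definition \ref{approx unit}, whence $k_{\Lambda_\psi(\M)}(\alpha)=0$ by \cite[Theorem 3.6]{BSZZ2023}. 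Theorem \ref{thm_1.2_BSZZ2024} applied with $E=\Lambda_\psi$ now yields the desired diagonal $\delta$. The main obstacle of the argument is the simultaneous control of the two competing quantities $\|[p_m,\alpha(j)]\|_\infty$ and $\tau(\mathfrak{l}([p_m,\alpha(j)]))$ produced by Lemma \ref{proj co}: the balance between the decay $2^{-m}$ and the growth $2^{mn}$ is exactly what the asymptotic hypothesis on $\psi$, via Lemma \ref{conditions on psi}, is tailored to compensate.
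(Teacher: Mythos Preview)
Your argument follows the paper's Step 1 essentially verbatim, but it omits the paper's Step 2, and that omission is a genuine gap. The passage from ``$\{p_{m_k}\}$ is a quasicentral approximate unit for $\alpha$ relative to $\Lambda_\psi(\M)$'' to ``$k_{\Lambda_\psi(\M)}(\alpha)=0$'' via \cite[Theorem 3.6]{BSZZ2023} requires the hypothesis $\Lambda_\psi\not\subset L_\infty$ (equivalently $\psi(0+)=0$); the paper flags this explicitly in its proof. The standing hypothesis $\Lambda_\psi\cap L_\infty\not\subset L_{n,1}$ does not force $\psi(0+)=0$: for instance $\psi\equiv c>0$ on $(0,\infty)$ with $\psi(0)=0$ is increasing and concave, and then $\Lambda_\psi=L_\infty$, so the equivalence you invoke is unavailable. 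The underlying reason is that the definition of $k_{E(\M)}$ requires, for each $a\in\mathcal F_1^+(\M)$, an $r\ge a$ with small commutator; your projections $p_{m_k}$ need not dominate a given $a$, and the usual correction by a trace-small piece only has small $E$-norm when $\varphi_E(0+)=0$.

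The paper closes this gap in its Step 2 by the simple replacement $\psi\mapsto\psi_1$ with $\psi_1(t)=\min\{\psi(t),t\}$, so that $\Lambda_{\psi_1}=\Lambda_\psi+L_1$. Then $L_1\subset\Lambda_{\psi_1}$ forces $\Lambda_{\psi_1}\not\subset L_\infty$, while $(\Lambda_\psi+L_1)\cap L_\infty=\Lambda_\psi\cap L_\infty$ with equivalent norms, so both the hypothesis $\Lambda_{\psi_1}\cap L_\infty\not\subset L_{n,1}$ and the conclusion transfer back to $\Lambda_\psi$. Adding this reduction to your write-up makes the argument complete and then it coincides with the paper's proof.
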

\begin{proof} {\bf Step 1.} We prove the assertion assuming that $\Lambda_{\psi}\not\subset L_\infty.$ 

Simply adding $\|\alpha(j)\|_\infty\cdot\mathbf{1}$ to $\alpha(j)$ and rescaling, we may assume without loss of generality that ${\rm Spec}(\alpha(j))\subset [0,1)$ for every $1\le j\le n.$ From Lemma \ref{proj co}, there exists a sequence of $\tau$-finite projections $\{p_m\}_{m\ge0}$, $p_m\uparrow \mathbf{1}$ such that
$$\|[p_m,\alpha(j)]\|_{\infty}\le 2^{-m},\quad \tau(p_m)\leq 2^{mn}\tau(q),\quad m\geq0,\ 1\le j\le n.$$
Note that 
$$\mathfrak{l}([p_m,\alpha(j)])\le \mathfrak{l}(p_m\alpha(j))\vee \mathfrak{l}(\alpha(j)p_m).$$
Then 
$$\tau(\mathfrak{l}([p_m,\alpha(j)]))\le 2\tau(p_m)\le 2^{mn+1}\tau(q).$$
For every $m\ge0$, we have
\begin{multline*}
\|[p_m,\alpha(j)]\|_{\Lambda_{\psi}(\M)}\stackrel{{\rm Lemma}\ \ref{symmetric norm estimation lemma}}{\le} \|[p_m,\alpha(j)]\|_{\infty}\psi(\tau(\mathfrak{l}([p_m,\alpha(j)])))\\
\le  2^{-m}\psi(2^{mn+1}\tau(q)) \overset{{\rm Lemma}\ \ref{fundamental function trivial lemma}}{\le} \max\{2\tau(q),1\}\cdot 2^{-m}\psi(2^{mn}).
\end{multline*}
Thus, 
$$\|[p_m,\alpha(j)]\|_{\Lambda_{\psi}(\M)}\le \max\{2\tau(q),1\}\cdot 2^{-m}\psi(2^{mn}).$$
By Lemma \ref{conditions on psi}, there exists a strictly increasing sequence $\{m_k\}_{k\geq0}\subset\mathbb{Z}_+$ such that 
$$\max\{2\tau(q),1\}\cdot 2^{-m_k}\psi(2^{m_kn})\le 2^{-k},\quad k\geq0.$$
Then for every $1\le j\le n,$
$\|[p_{m_k},\alpha(j)]\|_{\Lambda_{\psi}(\M)}\to0$ as $k\to\infty.$ From \cite[Theorem 3.6]{BSZZ2023} (this is the only place in the proof where we use the assumption that $\Lambda_{\psi}\not\subset L_\infty$) we have
$$k_{\Lambda_{\psi}(\M)}(\alpha)=0.$$
Using Theorem \ref{thm_1.2_BSZZ2024}, we find a diagonal $n$-tuple $\delta\in\M^n$ such that
$$\|\alpha-\delta\|_{(\Lambda_{\psi}\cap L_\infty)(\M)}<\e.$$

{\bf Step 2.} Note that 
\begin{equation}\label{plus L1 eq}
(\Lambda_{\psi}+L_1)\cap L_\infty=\Lambda_{\psi}\cap L_\infty
\end{equation}
with equivalent norms. Thus, $(\Lambda_{\psi}+L_1)\cap L_{\infty}\not\subset L_{n,1}.$
Note that $\Lambda_{\psi}+L_1$ is also a Lorentz space, actually, $\Lambda_{\psi}+L_1=\Lambda_{\psi_1}$ where $\psi_1(t)=\min\{\psi(t),t\},t\ge0$ (see \cite[Theorem II.5.9]{Krein1982}). 

Since $L_1\subset \Lambda_{\psi_1}$, it follows that $\Lambda_{\psi_1}\not\subset L_\infty$. Thus, by Step 1,  for every $\e>0$, there exists a diagonal $n$-tuple $\delta\in\M^n$ such that 
$$\|\alpha-\delta\|_{(\Lambda_{\psi_1}\cap L_\infty)(\M)}\le \e. $$
Combining with \eqref{plus L1 eq}, the proof is complete.
\end{proof}

\begin{proof}[Proof of Theorem \ref{kuroda thm lorentz}] Let $q\mapsto e_q$ be the mapping on the lattice $\mathcal{P}(\M)$ defined as in Lemma \ref{qe lemma}. By Lemma \ref{generating projections sequence lemma}, there exists a sequence $\{q_k\}_{k\ge1}$ of pairwise orthogonal $\tau$-finite projections in $\M$ such that the projections $\{e_{q_k}\}_{k\ge1}$ are pairwise orthogonal and $\sum_{k\ge 1} e_{q_k}=\mathbf{1}.$ By Lemma \ref{qe lemma}, each $e_{q_k}$ commutes with $\alpha$ and $q_k$ is a generating function of $\alpha e_{q_k}$ in $e_{q_k}\M e_{q_k}.$ Thus, for every $\e>0,$ by Lemma \ref{wvn for lorentz lemma} (applied to the pair $(\alpha e_{q_k},e_{q_k}\M e_{q_k})$), there exists a diagonal $n$-tuple $\delta_k\in (e_{q_k}\M e_{q_k})^n$ such that
\begin{equation}\label{estimate lorentz subspace}
\|\alpha e_{q_k}-\delta_k\|_{(\Lambda_{\psi}\cap L_\infty)(e_{q_k}\M e_{q_k})}< 2^{-k}\e.
\end{equation}

Set $\delta(j)=\sum_{k\geq1}\delta_k(j),1\le j\le n.$ Since the sequence $\{\delta_k(j)\}_{k\geq1}$ consists of pairwise orthogonal operators and since
$$\sup_{k\geq1}\|\delta_k(j)\|_{\infty}\leq\sup_{k\geq1}\|\alpha(j)e_{q_k}\|_{\infty}+2^{-k}\e\leq \|\alpha(j)\|_{\infty}+\e,\quad 1\le j\le n,$$
it follows from Lemma \ref{trivial lemma} that $\delta\in\M^n.$ It is clear that $\delta$ is a commuting diagonal $n$-tuple.
	
For each $1\le j\le n,$ we write
$$\alpha(j)-\delta(j)=\sum_{k\geq1}(\alpha(j) e_{q_k}-\delta_k(j)),$$
where the series converges absolutely in $(\Lambda_{\psi}\cap L_\infty)(\M)$ due to \eqref{estimate lorentz subspace}. By \eqref{estimate lorentz subspace} and the triangular inequality, we have $\|\alpha-\delta\|_{(\Lambda_{\psi}\cap L_\infty)(\M)}<\e.$
\end{proof}

\section{Kuroda's theorem for $n$-tuples in semifinite von Neumann algebras}\label{symetric space sec}

In this section, we give the proof of Theorem \ref{thm_main}. The proof is based on Theorem \ref{kuroda thm lorentz} and ideas from \cite[Section 2]{V1990}.

We need some results related to K\"othe dual. We refer the reader to \cite{DDP1993} and \cite{DPS2023} for the K\"othe dual theory in the non-commutative setting. Let $E$ be a symmetric function space on $(0,\infty)$ and $E(\M)$ the symmetric space associated with $\M$. The {\em K\"othe dual} $(E(\M))^\times$ of $E(\M)$ is defined by setting
$$(E(\M))^{\times}=\{y\in S(\tau):\sup\{\tau(|xy|):x\in E(\M),\|x\|_{E(\M)}\le 1\}<\infty\}.$$
For $y\in (E(\M))^{\times},$ the norm $\|y\|_{(E(\M))^{\times}}$ is defined by setting 
\begin{equation}\label{kothe norm def}
\|y\|_{(E(\M))^{\times}}=\sup\{\tau(|xy|):x\in E(\M),\|x\|_{E(\M)}\le 1\}.
\end{equation}

For a symmetric function space $E$ on $(0,\infty)$, $(E^{\times},\|\cdot\|_{E^{\times}})$ is exactly the {\em associated space} of $E$ defined in \cite{Krein1982}. Namely, 
$$E^{\times}=\{g\in S(0,\infty):\int_0^\infty f(t)g(t)dt<\infty\ \text{for all }f\in E, \|f\|_E\le1\},$$
\begin{equation}\label{dual norm}
\|g\|_{E^\times}=\sup_{\|f\|_E\le1}\int_0^\infty f(t)g(t)dt<\infty,\quad g\in E^{\times}.
\end{equation}
Moreover, $(E^{\times},\|\cdot\|_{E^{\times}})$ is also a symmetric function space on $(0,\infty)$ (see \cite[p.104]{Krein1982}).

Let $\psi$ be an increasing concave function on $[0,\infty)$ such that $\psi(0)=0.$ The Marcinkiewicz space $(M_{\psi},\|\cdot\|_{M_\psi})$ associated with $\psi$ is defined by setting
\begin{equation}\label{M space}
M_{\psi}=\Big\{x\in S(0,\infty):\sup_{t>0}\frac{1}{\psi(t)}\int_0^t \mu(t;x)dt<\infty\Big\},
\end{equation}
$$\|x\|_{M_{\psi}}=\sup_{t>0}\frac{1}{\psi(t)}\int_0^t \mu(t;x)dt,\quad x\in M_{\psi}.$$
It is well-known that $M_{\psi}$ coincides with the K\"othe dual of $\Lambda_{\psi}$, i.e. $M_{\psi}=\Lambda_{\psi}^{\times}$ (see \cite[p.114]{Krein1982}).

On $(E(\M))^n$ we consider the norm $\|\zeta\|_{E(\M)}=\max_{1\le j\le n}\{\|\zeta(j)\|_{E(\M)}\},$ and on $(E^{\times}(\M))^n$ we consider the norm $\|\zeta\|_{\inf}=\sum_{j=1}^n \|\zeta(j)\|_{E^{\times}(\M)}.$ It follows from Lemma \ref{DDP1993 3.4} that $\tau(xy)=\tau(yx)$ for $x\in E(\M)$ and $y\in E^{\times}(\M)$. Consequently, a duality pairing may be defined by setting 
\begin{equation}\label{duality pairing}
\langle \alpha,\beta \rangle=\sum_{j=1}^n\tau(\alpha(j)\beta(j))=\sum_{j=1}^n\tau(\beta(j)\alpha(j))
\end{equation}
for $\alpha\in (E(\M))^n,\ \beta\in (E^{\times}(\M))^n.$ Denote by $(E(\M))_{sa}$ the self-adjoint elements in $E(\M)$. The following lemma can be found in \cite[Lemma 2.4]{MS2008}.

\begin{lemma}\label{dual space lemma} Let $\M$ be a von Neumann algebra with a faithful normal semifinite trace $\tau.$ Let $E$ be a separable symmetric function space on $(0,\infty).$ We have $$((E(\M))_{sa}^n,\|\cdot\|_{E(\M)})^*=((E^{\times}(\M))_{sa}^n,\|\cdot\|_{\inf})$$
via the duality pairing given by \eqref{duality pairing}. Here $(\cdot)^{\ast}$ denotes the Banach dual.
\end{lemma}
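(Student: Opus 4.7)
The plan is to combine two standard ingredients: the scalar K\"othe duality $(E(\M))^* \cong E^\times(\M)$ via the trace pairing $(x,y)\mapsto \tau(xy)$, and the elementary $\ell^\infty_n$--$\ell^1_n$ duality for finite direct sums of Banach spaces. The separability hypothesis on $E$ is precisely what makes the first ingredient available: by Lemma \ref{separable lemma} it yields $E=E^0$ together with $\varphi_E(0+)=0$, which are exactly the conditions under which the map $y\mapsto(x\mapsto \tau(xy))$ is an isometric isomorphism $E^\times(\M) \to (E(\M))^*$ (the noncommutative K\"othe duality theorem, available in \cite{DDP1993,DPS2023}).

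The main step is to transfer this scalar duality to self-adjoint subspaces. Let $\phi$ be a bounded real-linear functional on the real Banach space $E(\M)_{sa}$, and extend it to a bounded complex-linear functional $\tilde\phi$ on $E(\M)$ via $\tilde\phi(x_1+ix_2)=\phi(x_1)+i\phi(x_2)$, where $x_1,x_2$ are the self-adjoint real and imaginary parts of $x$. Represent $\tilde\phi$ by some $y \in E^\times(\M)$, and decompose $y=y_1+iy_2$ with $y_1,y_2$ self-adjoint. For each self-adjoint $x$ and $k\in\{1,2\}$, the trace cyclicity from Lemma \ref{DDP1993 3.4} gives $\overline{\tau(xy_k)}=\tau((xy_k)^*)=\tau(y_k x)=\tau(xy_k)$, so $\tau(xy_k)\in\mathbb{R}$; since $\phi(x)=\tau(xy_1)+i\tau(xy_2)$ must be real-valued on $E(\M)_{sa}$, we deduce $\tau(xy_2)=0$ for all $x=x^*$, hence for all $x\in E(\M)$ by writing $x=x_1+ix_2$. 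The injectivity of the scalar K\"othe duality then forces $y_2=0$. Thus every $\phi \in (E(\M)_{sa})^*$ is represented by a unique self-adjoint $y\in E^\times(\M)_{sa}$ via $\phi(x)=\tau(xy)$; the isometry $\|\phi\|=\|y\|_{E^\times(\M)}$ follows from $\|y\|_{E^\times(\M)}=\sup\{|\tau(xy)|:x=x^*,\,\|x\|_{E(\M)}\le 1\}$, which in turn holds because any $x$ achieving near-optimality can be rotated to make $\tau(xy)\in\mathbb{R}$ and then replaced by its self-adjoint part without increasing its $E(\M)$-norm.

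Finally, for the $n$-tuple statement, view $((E(\M))_{sa}^n,\|\cdot\|_{E(\M)})$ as the $\ell^\infty$-direct sum of $n$ copies of $E(\M)_{sa}$; its Banach dual is canonically the $\ell^1$-direct sum of the dual spaces. Applying the scalar self-adjoint duality coordinate-wise produces, for each $\phi\in((E(\M))_{sa}^n)^*$, a unique $y\in (E^\times(\M))_{sa}^n$ with $\phi(\alpha)=\sum_{j=1}^n\tau(\alpha(j)y(j))=\langle\alpha,y\rangle$ as in \eqref{duality pairing}. The norm equality $\|\phi\|=\sum_{j=1}^n\|y(j)\|_{E^\times(\M)}=\|y\|_{\inf}$ then follows by choosing near-optimal self-adjoint witnesses $\alpha(j)\in E(\M)_{sa}$ for each $y(j)$ independently and assembling them into a single tuple with $\|\alpha\|_{E(\M)}=\max_j\|\alpha(j)\|_{E(\M)}\le 1$. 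I expect the only genuinely delicate point to be the self-adjoint reduction in the scalar step; once it is verified that the representing K\"othe-dual element of a real-valued functional on $E(\M)_{sa}$ is itself self-adjoint, the $n$-tuple statement assembles routinely from this and the elementary $\ell^\infty$/$\ell^1$ duality.
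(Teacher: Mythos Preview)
Your argument is correct. The paper does not actually prove this lemma: it simply records it as a known fact and cites \cite[Lemma~2.4]{MS2008}. Your outline supplies exactly the standard proof one would expect---noncommutative K\"othe duality $(E(\M))^*\cong E^\times(\M)$ under the separability hypothesis, followed by the self-adjoint reduction and the elementary $\ell^\infty_n/\ell^1_n$ duality for finite direct sums---so there is no discrepancy in approach to discuss, only the difference that you have written out what the paper merely cites.

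One small remark on the execution: the boundedness of the complex-linear extension $\tilde\phi(x_1+ix_2)=\phi(x_1)+i\phi(x_2)$ relies on $\|x_k\|_{E(\M)}\le\|x\|_{E(\M)}$ for the real and imaginary parts, which follows from $x_1=\frac12(x+x^*)$, $x_2=\frac1{2i}(x-x^*)$ together with $\|x^*\|_{E(\M)}=\|x\|_{E(\M)}$ (since $\mu(x^*)=\mu(x)$). You use this implicitly; it is routine but worth stating. Likewise, in the isometry step your ``rotate and take self-adjoint part'' argument needs $y$ already self-adjoint to conclude $\tau\big(\tfrac{x+x^*}{2}\,y\big)=\mathrm{Re}\,\tau(xy)$; since at that point you have established $y=y_1=y_1^*$, this is fine.
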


In this section, we will frequently use the notation
$$y(\alpha,\gamma)=i\sum_{j=1}^n [\alpha(j),\gamma(j)],\quad \alpha,\gamma\in S_{sa}(\tau)^n,$$
where $S_{sa}(\tau)$ is the set of all self-adjoint elements in $S(\tau),$ and $[\cdot,\cdot]$ is the commutator.
 
\begin{proposition}\label{commmutator estimation prop 2} Let $\M$ be a $\sigma$-finite von Neumann algebra with a normal semifinite faithful trace $\tau$. Let $\alpha\in \M^n$ be a commuting self-adjoint $n$-tuple and let $\gamma\in (M_{\psi}(\M))^n$ be a self-adjoint $n$-tuple. If $(y(\alpha,\gamma))_-\in L_1(\M),$ then 
$$|\tau(y(\alpha,\gamma))|\le k_{\Lambda_{\psi}(\M)}(\alpha)\sum_{j=1}^n\|\gamma(j)\|_{M_{\psi}(\M)}.$$
\end{proposition}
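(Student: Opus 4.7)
The plan is to pair $y(\alpha,\gamma)$ with the positive contractions $r\in \mathcal{F}_1^+(\M)$ that appear in the definition of the quasicentral modulus, and then to let $r$ tend to $\mathbf{1}$ along an appropriate net of such contractions. Set $K=k_{\Lambda_\psi(\M)}(\alpha)$ and $C=\sum_{j=1}^n\|\gamma(j)\|_{M_\psi(\M)}$; we may assume $K<\infty$, for otherwise the claim is trivial.

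First I would fix $r\in \mathcal{F}_1^+(\M)$ and establish the commutator identity
$$\tau\bigl(r\,y(\alpha,\gamma)\bigr)=i\sum_{j=1}^n\tau\bigl([r,\alpha(j)]\gamma(j)\bigr).$$
Since $\tau(\mathfrak{l}(r))<\infty$, we have $r\in \mathcal{F}(\M)\subset \Lambda_\psi(\M)\cap\M$, whence $r\alpha(j),\alpha(j)r,r\gamma(j)\in \Lambda_\psi(\M)$ (using that $\Lambda_\psi(\M)$ is an $\M$-bimodule). As $\gamma(j)\in M_\psi(\M)=\Lambda_\psi^\times(\M)$, the Köthe duality \eqref{kothe norm def} places each of the products $r\alpha(j)\gamma(j),\alpha(j)r\gamma(j)$ and $r\gamma(j)\alpha(j)$ inside $L_1(\M)$, and trace cyclicity (Lemma \ref{DDP1993 3.4}) applied to the pair $(\alpha(j),r\gamma(j))$ gives $\tau(\alpha(j)r\gamma(j))=\tau(r\gamma(j)\alpha(j))$; the displayed identity then follows by expanding both commutators. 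Summing, using $|\tau(xy)|\le \|x\|_{\Lambda_\psi(\M)}\|y\|_{M_\psi(\M)}$ and $\|[r,\alpha(j)]\|_{\Lambda_\psi(\M)}\le \|[r,\alpha]\|_{\Lambda_\psi(\M)}$, produces the basic estimate
\begin{equation}\label{basic bound plan}
|\tau(r\,y(\alpha,\gamma))|\le \|[r,\alpha]\|_{\Lambda_\psi(\M)}\cdot C.
\end{equation}

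Next I would make $r$ large. Fix $\e>0$ and, using the semifiniteness of $\tau$, pick an increasing net $\{p_\lambda\}$ of $\tau$-finite projections with $p_\lambda\uparrow \mathbf{1}$. By Definition \ref{def_qc} one can select, for each $\lambda$, an element $r_\lambda\in\mathcal{F}_1^+(\M)$ with $r_\lambda\ge p_\lambda$ and $\|[r_\lambda,\alpha]\|_{\Lambda_\psi(\M)}\le K+\e$, so that \eqref{basic bound plan} yields the uniform bound $|\tau(r_\lambda y(\alpha,\gamma))|\le (K+\e)C$. Decomposing $y:=y(\alpha,\gamma)=y_+-y_-$ with $y_-\in L_1(\M)$ and $y_+\in M_\psi(\M)\subset (L_1+L_\infty)(\M)$, the sandwich $\tau(p_\lambda y_\pm)\le \tau(r_\lambda y_\pm)\le \tau(y_\pm)$ together with Lemma \ref{trace limit lem} applied to $p_\lambda\uparrow\mathbf{1}$ gives $\tau(r_\lambda y_-)\to \tau(y_-)<\infty$ and $\tau(r_\lambda y_+)\to \tau(y_+)\in[0,+\infty]$. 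The uniform bound then reads $\tau(r_\lambda y_+)\le \tau(y_-)+(K+\e)C$, forcing $\tau(y_+)<\infty$; consequently $\tau(y)\in\mathbb{R}$ and $\tau(r_\lambda y)\to \tau(y)$. Passing to the limit in \eqref{basic bound plan} and letting $\e\downarrow 0$ delivers $|\tau(y(\alpha,\gamma))|\le KC$.

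The subtlety I expect to be the main obstacle is the interplay between integrability and the limit: although $\tau(y(\alpha,\gamma))$ is well-defined by virtue of $y_-\in L_1(\M)$, the finiteness of $\tau(y_+)$ is not assumed and must be extracted en route. The trick is to let the uniform bound \eqref{basic bound plan} do double duty, first upgrading $y_+\in M_\psi(\M)$ to $\tau(y_+)<\infty$ via the inequality $\tau(r_\lambda y_+)\le \tau(y_-)+(K+\e)C$, and then, once $\tau(y)\in\mathbb{R}$ has been secured, producing the desired trace estimate in the limit.
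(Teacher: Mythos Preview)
Your argument is correct and follows the same overall strategy as the paper: establish the identity $\tau(r\,y(\alpha,\gamma))=i\sum_j\tau([r,\alpha(j)]\gamma(j))$ via Lemma~\ref{DDP1993 3.4}, bound it by $\|[r,\alpha]\|_{\Lambda_\psi(\M)}\cdot C$ using K\"othe duality, and then pass to the limit through the positive and negative parts of $y(\alpha,\gamma)$ with Lemma~\ref{trace limit lem}.

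The one genuine difference is how the limiting contractions are produced. The paper invokes \cite[Theorem~3.6]{BSZZ2023} to obtain an \emph{increasing} sequence $r_m\uparrow\mathbf{1}$ with $\|[r_m,\alpha]\|_{\Lambda_\psi(\M)}\to k_{\Lambda_\psi(\M)}(\alpha)$, so that Lemma~\ref{trace limit lem} applies directly to $\{r_m\}$. You instead work straight from Definition~\ref{def_qc}: for each $\tau$-finite projection $p_\lambda$ you pick $r_\lambda\ge p_\lambda$ with $\|[r_\lambda,\alpha]\|_{\Lambda_\psi(\M)}\le K+\e$, and then the sandwich $\tau(p_\lambda y_\pm)\le\tau(r_\lambda y_\pm)\le\tau(y_\pm)$ together with Lemma~\ref{trace limit lem} for $p_\lambda\uparrow\mathbf{1}$ forces $\tau(r_\lambda y_\pm)\to\tau(y_\pm)$. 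This is a neat, more self-contained variant that removes the dependence on the external result from \cite{BSZZ2023}; it also makes explicit (as you note) that the finiteness of $\tau(y_+)$ is a consequence of the estimate rather than an assumption, a point the paper's proof leaves implicit.
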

\begin{proof} If $k_{\Lambda_{\psi}(\M)}(\alpha)=\infty,$ then there is nothing to prove. We, therefore, assume $k_{\Lambda_{\psi}(\M)}(\alpha)<\infty.$ By \cite[Theorem 3.6]{BSZZ2023}, choose a sequence $\{r_m\}_{m\ge1}\subset\mathcal{F}_1^+(\M)$ such that $r_m\uparrow \mathbf{1}$ as $m\to\infty$, and 
$$\lim_{m\to\infty}\|[r_m,\alpha]\|_{\Lambda_{\psi}(\M)}=k_{\Lambda_{\psi}(\M)}(\alpha).$$
Here, we recall that $[r,\alpha]=([r,\alpha(j)])_{j=1}^n$ for $r\in \M,\alpha\in\M^n.$

Clearly, 
$$r_my(\alpha,\gamma)=i\sum_{j=1}^n r_m[\alpha(j),\gamma(j)]=i\sum_{j=1}^n [r_m,\alpha(j)]\gamma(j)+i\sum_{j=1}^n[\alpha(j),r_m\gamma(j)].$$
Since $\gamma(j)\in L_1(\M)+\M$ and $r_m\in L_1(\M)\cap \M$, it follows that $r_m\gamma(j)\in L_1(\M).$ By the assumption, we have $\alpha(j)\in\M,$ and appealing to Lemma \ref{DDP1993 3.4} we obtain $\tau([\alpha(j),r_m\gamma(j)])=0$ for every $1\le j\le n.$ Thus,
$$\tau(r_my(\alpha,\gamma))=i\sum_{j=1}^n \tau([r_m,\alpha(j)]\gamma(j))$$
and
\begin{equation}\label{c-s ineq}
|\tau(r_my(\alpha,\gamma))|\le \sum_{j=1}^n |\tau([r_m,\alpha(j)]\gamma(j))|
\overset{\eqref{kothe norm def}}{\le} \sum_{j=1}^{n}\|[r_m,\alpha(j)]\|_{\Lambda_{\psi}(\M)}\|\gamma(j)\|_{M_{\psi}(\M)}.
\end{equation}
Since $(y(\alpha,\gamma))_+\in L_1(\M)+\M$, it follows from Lemma \ref{trace limit lem} that
$$\tau((y(\alpha,\gamma))_+)=\lim_{m\to\infty} \tau(r_m(y(\alpha,\gamma))_+).$$
Similarly, 
$$\tau((y(\alpha,\gamma))_-)=\lim_{m\to\infty} \tau(r_m(y(\alpha,\gamma))_-).$$
Note that $(y(\alpha,\gamma))_-\in L_1(\M).$ We have
\begin{multline*}
\tau(y(\alpha,\gamma))=\tau((y(\alpha,\gamma))_+)-\tau((y(\alpha,\gamma))_-)\\
=\lim_{m\to\infty} \tau(r_m(y(\alpha,\gamma))_+)-\lim_{m\to\infty}\tau(r_m(y(\alpha,\gamma))_-)=\lim_{m\to\infty} \tau(r_my(\alpha,\gamma)).
\end{multline*} 
Passing $m\to\infty$ in \eqref{c-s ineq}, we complete the proof.
\end{proof}

\begin{lemma}\label{trace est lem} Let $y=y^\ast\in (L_1+L_\infty)(\M)$ and let $p\in \mathcal{P}(\M)$ be $\tau$-finite. We have
$$\tau(py)\le \int_0^{\tau(p)} \mu(s;y)ds.$$
\end{lemma}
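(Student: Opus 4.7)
The plan is to symmetrise the product $py$ by cyclicity of the trace, bound $\tau(pyp)$ by $\tau(|pyp|)$, and then combine the trace-integral formula \eqref{trace as integral} with the facts that $p$ has finite trace and is a contraction.

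First, I would check that $\tau(py)$ is well-defined. Splitting $y = y_1 + y_2$ with $y_1 \in L_1(\M)$ and $y_2 \in \M$, the ideal property of $L_1(\M)$ gives $py_1 \in L_1(\M)$, while $\tau(|py_2|) \le \|y_2\|_\infty \tau(p) < \infty$ shows $py_2 \in L_1(\M)$, so $py \in L_1(\M)$. Cyclicity of the trace (Lemma \ref{DDP1993 3.4}) applied to $p$ and $py$ yields
$$\tau(py) = \tau(p^2 y) = \tau(pyp),$$
and $pyp$ is self-adjoint since $y = y^*$ and $p = p^*$.

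Next, writing $pyp = (pyp)_+ - (pyp)_-$ with $(pyp)_\pm \ge 0$, positivity of $\tau((pyp)_-)$ gives $\tau(pyp) \le \tau((pyp)_+) \le \tau(|pyp|)$, and by \eqref{trace as integral},
$$\tau(|pyp|) = \int_0^\infty \mu(s; pyp) \, ds.$$

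Finally, the left and right supports of $pyp$ lie under $p$, so $\mu(s; pyp) = 0$ for $s \ge \tau(p)$. Moreover, since $p$ is a contraction, the standard pointwise estimate $\mu(s; pyp) \le \|p\|_\infty^2 \mu(s; y) = \mu(s; y)$ holds for every $s \ge 0$ (see, e.g., the basic properties of singular values for products in \cite{DPS2023}). Combining these bounds,
$$\tau(py) = \tau(pyp) \le \int_0^{\tau(p)} \mu(s; pyp) \, ds \le \int_0^{\tau(p)} \mu(s; y) \, ds,$$
which is the desired inequality. The argument is a direct assembly of standard facts about $\tau$-measurable operators recalled in Section \ref{sec_pre} and presents no genuine obstacle.
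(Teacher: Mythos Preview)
Your proposal is correct and follows essentially the same route as the paper's proof: both symmetrise via $\tau(py)=\tau(pyp)$, bound by $\tau(|pyp|)=\int_0^\infty\mu(s;pyp)\,ds$, truncate the integral at $\tau(p)$ using the support of $pyp$, and then invoke $\mu(s;pyp)\le\mu(s;y)$. Your explicit verification that $py\in L_1(\M)$ is a small addition, but otherwise the arguments coincide.
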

\begin{proof} Let $x=pyp.$ Since $p$ is $\tau$-finite, it follows that $x\in L_1(\M)$. We have
\begin{equation}\label{trace estimate}
\tau(py)\overset{{\rm Lemma}\ \ref{DDP1993 3.4}}{=}\tau(pyp)\le \tau(|x|)\overset{\eqref{trace as integral}}{=}\int_0^{\infty}\mu(s;|x|)ds.
\end{equation}
For every $s>0,$ we have 
$$\tau(e^{|x|}(s,\infty))\le \tau(e^{|x|}(0,\infty))\le \tau(p).$$
Thus, $d(s;|x|)\le \tau(p)$ for all $s>0.$ Hence, $\mu(t;|x|)=0$ for all $t\ge \tau(p).$ Combining this observation with \eqref{trace estimate}, we obtain that
$$\tau(py)\le \int_0^{\tau(p)}\mu(s;|x|)ds.$$
Note that $\mu(x)\le \mu(y)$ (see \cite[Proposition 3.2.7 (vi)]{DPS2023}). The proof is complete.
\end{proof}

\begin{lemma}\label{negative part intergtable} Let $y=y^{\ast}\in (L_1+L_{\infty})(\M)$ and let $r_0$ be a $\tau$-finite projection. If
\begin{equation}\label{trace lower bound}
\tau(ry)\geq 0,\quad \mbox{for all } r\geq r_0,\ r\in \mathcal{F}_1^+(\M),
\end{equation}
then $y_-\in L_1(\M).$
\end{lemma}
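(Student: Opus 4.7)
The plan is to test the hypothesis against positive contractions of the form $q = f q_0 f$, where $f := \mathbf{1} - r_0$ and $q_0$ is a $\tau$-finite projection dominated by the support projection $s_- := e^y(-\infty, 0)$ of $y_-$. Setting $r = r_0$ in the hypothesis immediately yields that $C := \tau(r_0 y) \geq 0$ is a finite number, and $C' := \tau(r_0 y_+) < \infty$ because $r_0 \in L_1(\M) \cap \M$ and $y_+ \in (L_1 + L_\infty)(\M)$ together give $r_0 y_+ \in L_1(\M)$.

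For each such $q_0$, the operator $r := r_0 + f q_0 f$ lies in $\mathcal{F}_1^+(\M)$ and exceeds $r_0$, so the hypothesis rearranges via Lemma~\ref{DDP1993 3.4} to
\[
\tau(y_-^{1/2} f q_0 f y_-^{1/2}) \;\leq\; C + \tau(y_+^{1/2} f q_0 f y_+^{1/2}).
\]
The key observation is that $q_0 \leq s_-$ is orthogonal to $\mathfrak{l}(y_+)$, so $y_+^{1/2} q_0 = 0$, and writing $f = \mathbf{1} - r_0$ gives the algebraic identity $y_+^{1/2} f q_0 = -y_+^{1/2} r_0 q_0$, whose $\|\cdot\|_2$-norm is at most $\|y_+^{1/2} r_0\|_2 = \sqrt{C'}$. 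Consequently $\|y_-^{1/2} f q_0\|_2^2 \leq C + C'$ uniformly over all admissible $q_0$. Two monotone-convergence passages --- first letting $q_0$ run through $\tau$-finite subprojections of $e^y(-\infty, -\delta]$, then letting $\delta \downarrow 0$ --- upgrade this, by normality of $\tau$, to $\tau(f s_- f y_-) \leq C + C'$.

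Finally, the orthogonal decomposition $f = f s_- f + f (\mathbf{1} - s_-) f$ combined with $\mathbf{1} = f + r_0$ produces $\tau(y_-) = \tau(f s_- f y_-) + \tau(f(\mathbf{1} - s_-) f y_-) + \tau(r_0 y_-)$. The first summand is at most $C + C'$ and the third is finite; the middle summand is controlled by the symmetric identity, since $y_-^{1/2}(\mathbf{1} - s_-) = 0$ forces $y_-^{1/2} f (\mathbf{1} - s_-) f y_-^{1/2} = y_-^{1/2} r_0 (\mathbf{1} - s_-) r_0 y_-^{1/2} \leq y_-^{1/2} r_0 y_-^{1/2}$, giving $\tau(f(\mathbf{1} - s_-) f y_-) \leq \tau(r_0 y_-) < \infty$. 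Thus $y_- \in L_1(\M)$. The main delicate point will be choosing the correct test operator: $q_0$ itself is not dominated by $f$, so one must compress, introducing a cross-term with $y_+$ that would be unbounded were it not for the cancellation $y_+^{1/2} f q_0 = -y_+^{1/2} r_0 q_0$, which absorbs the entire cross-term into a quantity controlled by the $\tau$-finite projection $r_0$.
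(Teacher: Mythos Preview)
Your argument is correct but follows a genuinely different route from the paper's. The paper tests the hypothesis with the \emph{projections} $r = r_0 \vee q_k$, where $\{q_k\}$ are $\tau$-finite subprojections increasing to $e^y(-\infty,0]$; the inequality $\tau((r_0\vee q_k)y)\ge 0$ rearranges to $\tau(q_k y_-)\le \tau((r_0\vee q_k - q_k)y)$, and the right-hand side is bounded by the singular-value integral $\int_0^{\tau(r_0)}\mu(s;y)\,ds$ via Lemma~\ref{trace est lem}, since $\tau(r_0\vee q_k - q_k)\le\tau(r_0)$. A single monotone-convergence step then gives $\tau(y_-)\le \int_0^{\tau(r_0)}\mu(s;y)\,ds$. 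Your approach instead uses the non-projection test elements $r=r_0+fq_0f$ and the algebraic cancellation $y_+^{1/2}q_0=0$ to absorb the $y_+$-contribution into the fixed quantity $\tau(r_0 y_+)$; you then need a second decomposition $\mathbf 1 = r_0 + fs_-f + f(\mathbf 1 - s_-)f$ and a parallel cancellation $y_-^{1/2}(\mathbf 1 - s_-)=0$ to pass from $\tau(fs_-fy_-)$ to $\tau(y_-)$. This is more computational and yields the less explicit bound $\tau(y_-)\le C+C'+2\tau(r_0 y_-)$, but it has the virtue of never invoking singular values: everything is done with $L_2$-identities and normality of $\tau$. (Incidentally, your two-step limit through $e^y(-\infty,-\delta]$ is harmless but unnecessary; a single passage $q_0\uparrow s_-$ through $\tau$-finite subprojections already suffices, since $\tau((fy_-f)^{1/2}q_0(fy_-f)^{1/2})$ is monotone in $q_0$.)
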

\begin{proof} Let $q_0=e^{y}(-\infty,0].$ Choose a sequence $\{q_k\}_{k\in \mathbb{N}}$ of $\tau$-finite projections such that $q_k\uparrow q_0.$ We have
$$\tau((r_0\vee q_k)y)\geq0,\quad k\in\mathbb{N}.$$
Thus,
$$\tau((r_0\vee q_k-q_k)y)\geq -\tau(q_ky)=\tau(q_ky_-),\quad k\in\mathbb{N}.$$
Clearly,
$$\tau(r_0\vee q_k-q_k)\leq\tau(r_0),\quad k\in\mathbb{N}.$$
Thus, by Lemma \ref{trace est lem},
$$\tau((r_0\vee q_k-q_k)y)\leq\int_0^{\tau(r_0)}\mu(s;y)ds,\quad k\in\mathbb{N}.$$
Hence,
\begin{equation}\label{estimate on finite space}
\tau(q_ky_-)\leq \int_0^{\tau(r_0)}\mu(s;y)ds,\quad k\in\mathbb{N}.
\end{equation}
Since $y_-\in L_1(\M)+\M$, it follows from Lemma \ref{trace limit lem} (applied to the von Neumann algebra $q_0\M q_0$ and the sequence $\{q_k\}_{k\in \mathbb{N}}\subset \mathcal{F}_1^+(q_0\M q_0)$) that
$$\tau(y_-)=\lim_{k\to\infty} \tau(q_ky_-).$$
Passing $k\to\infty$ in \eqref{estimate on finite space}, we conclude that 
$$\tau(y_-)\leq \int_0^{\tau(r_0)}\mu(s;y)ds.$$
This completes the proof.	
\end{proof}

\begin{lemma}\label{from hb lemma} Let $X$ be a Banach space over the field $\mathbb{R}$ and let $K\subset X$ be a convex set such that $K\cap B_X(0,1)=\varnothing,$ where $B_X(0,1)$ denotes the open unit ball in $X$ centred at zero. There exists an $f\in X^{\ast}$ such that $f(\zeta)\geq 1$ for every $\zeta\in K.$  
\end{lemma}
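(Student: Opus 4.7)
The plan is a direct application of the geometric form of the Hahn–Banach theorem. Since $B_X(0,1)$ is a non-empty open convex set, $K$ is a convex set, and the two sets are disjoint by hypothesis, the first geometric separation theorem supplies a nonzero $f \in X^{\ast}$ and a real number $\alpha$ such that
\[ f(x) < \alpha \leq f(\zeta), \qquad x \in B_X(0,1),\ \zeta \in K. \]

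Next I would pin down the constant $\alpha$. Taking the supremum over $x$ in the open unit ball gives $\|f\|_{X^{\ast}} \leq \alpha$ (the open ball does not attain its norm-supremum, but the supremum still equals $\|f\|_{X^{\ast}}$). Since $0 \in B_X(0,1)$, we have $0 = f(0) < \alpha$, so $\alpha > 0$. Consequently we may define $g = \alpha^{-1} f \in X^{\ast}$, which satisfies $g(\zeta) \geq 1$ for every $\zeta \in K$, as required.

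There is essentially no obstacle here: the statement is a routine rescaling of the standard Hahn–Banach separation of a convex set from a disjoint open convex set. The only mild point to verify is that the separating functional can be chosen with $\alpha > 0$, which is immediate from $0 \in B_X(0,1)$.
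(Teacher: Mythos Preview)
Your proof is correct and follows essentially the same route as the paper: apply Hahn--Banach separation to the open convex ball and the disjoint convex set $K$, observe that the separating constant is strictly positive because $0\in B_X(0,1)$, and rescale. The only cosmetic difference is that you invoke the strict-inequality form of separation to get $\alpha>0$ directly, whereas the paper argues that $\sup_{\eta\in B_X(0,1)}f(\eta)>0$ via the symmetry $\eta\mapsto-\eta$ of the ball.
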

\begin{proof} Let $L=B_X(0,1).$ Since $K$ is convex and since $L$ is open and convex, it follows from the Hahn-Banach separation theorem that there is a non-zero real-valued functional $f\in X^\ast$ such that 
$$\sup_{\eta\in L}f(\eta)\leq \inf_{\zeta\in K}f(\zeta).$$
Since $0\in L,$ it follows that $\sup_{\eta\in L}f(\eta)\geq 0.$ If $\sup_{\eta\in L}f(\eta)=0,$ then $f|_L=0$ (indeed, if $f(\eta)<0$ for some $\eta\in L$ then $f(-\eta)>0$ and thus $\sup_{\eta\in L}f(\eta)>0$). Hence, $f=0,$ which is a contradiction. Therefore, $\sup_{\eta\in L}f(\eta)>0.$ Assume without loss of generality that $\sup_{\eta\in L}f(\eta)=1.$ This completes the proof.
\end{proof}

\begin{proposition}\label{existence of commutator 2} Let $\M$ be a $\sigma$-finite von Neumann algebra with a faithful normal semifinite trace $\tau.$ Suppose $E$ is a separable symmetric  function space on $(0,\infty).$ Let $\alpha\in \M^n$ be a commuting self-adjoint $n$-tuple. Suppose $k_{E(\M)}(\alpha)>0.$ There exists a non-zero self-adjoint $n$-tuple $\gamma\in (E^{\times}(\M))^n$ such that
\begin{enumerate}[\rm (i)]
\item\label{item0} $(y(\alpha,\gamma))_-\in L_1(\M)$;
\item\label{item1} $\tau(y(\alpha,\gamma))\neq0.$
\end{enumerate}
\end{proposition}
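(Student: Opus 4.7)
The plan is to combine the Hahn--Banach separation theorem (Lemma \ref{from hb lemma}) with the duality $((E(\M))_{sa}^n)^{\ast}=((E^{\times}(\M))_{sa}^n,\|\cdot\|_{\inf})$ from Lemma \ref{dual space lemma}. The hypothesis $k_{E(\M)}(\alpha)>0$ produces a convex subset of $(E(\M))_{sa}^n$ uniformly bounded away from zero; separating this set from the origin by a continuous linear functional, then representing the functional via the K\"othe duality, will yield the required self-adjoint $n$-tuple $\gamma$. The remaining two properties (i)--(ii) will come from rewriting the duality inequality in terms of $\tau(r\,y(\alpha,\gamma))$ using cyclicity of the trace, and feeding the resulting lower bound into Lemma \ref{negative part intergtable} and Lemma \ref{trace limit lem}.

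Concretely, I would proceed as follows. By Definition \ref{def_qc} there exist $a_0\in \mathcal{F}_1^+(\M)$ and $c>0$ such that $\|[r,\alpha]\|_{E(\M)}\ge c$ whenever $r\in\mathcal{F}_1^+(\M)$ and $r\ge a_0$. Set
$$C=\{i[r,\alpha]:r\in\mathcal{F}_1^+(\M),\ r\ge a_0\}\subset (E(\M))_{sa}^n;$$
it is a convex subset (since convex combinations of $r$'s stay in $\mathcal{F}_1^+(\M)$ and still dominate $a_0$) disjoint from the open ball of radius $c$ centred at $0$. Apply Lemma \ref{from hb lemma} to $C/c$ and then Lemma \ref{dual space lemma} (using separability of $E$) to obtain a self-adjoint $\gamma\in (E^{\times}(\M))^n$ satisfying
$$\sum_{j=1}^{n}\tau\bigl(i[r,\alpha(j)]\gamma(j)\bigr)\ge c\quad \text{for all admissible } r.$$
Since $r\in L_1(\M)\cap\M$ and $\alpha(j)\gamma(j),\gamma(j)\alpha(j)\in L_1(\M)+\M$, cyclicity of the trace (Lemma \ref{DDP1993 3.4}) rewrites the left-hand side as $\tau(r\,y(\alpha,\gamma))$, so
$$\tau\bigl(r\,y(\alpha,\gamma)\bigr)\ge c>0,\quad r\in\mathcal{F}_1^+(\M),\ r\ge a_0.$$
Taking $r_0=\mathfrak{l}(a_0)$ gives a $\tau$-finite projection with $r_0\ge a_0$ (since $0\le a_0\le \mathfrak{l}(a_0)$), and $y(\alpha,\gamma)\in(L_1+L_\infty)(\M)$ is self-adjoint, so Lemma \ref{negative part intergtable} yields $(y(\alpha,\gamma))_-\in L_1(\M)$, giving (i). For (ii), pick any net $r_i\uparrow \mathbf{1}$ in $\mathcal{F}_1^+(\M)$ with $r_i\ge \mathfrak{l}(a_0)$ and apply Lemma \ref{trace limit lem} to $(y(\alpha,\gamma))_\pm$ to conclude $\tau(y(\alpha,\gamma))=\lim_i \tau(r_i y(\alpha,\gamma))\ge c>0$. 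In particular $\gamma\ne 0$.

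The main obstacle is the bookkeeping around the trace identity $\sum_j\tau(i[r,\alpha(j)]\gamma(j))=\tau(r\,y(\alpha,\gamma))$: one must verify the hypotheses of Lemma \ref{DDP1993 3.4} at each step, which is where the choices $\alpha\in\M^n$ and $r\in L_1(\M)\cap\M$ (rather than merely $r\in\M$) are used, and the anti-self-adjointness of $[r,\alpha(j)]$ must be invoked both to land in $(E(\M))_{sa}^n$ and to guarantee that the functional is real-valued. The Hahn--Banach step itself is essentially forced once the convex set $C$ is identified, and the fact that $a_0$ is only an operator, not a projection, is handled cleanly by replacing it with its left support $\mathfrak{l}(a_0)$ before invoking Lemma \ref{negative part intergtable}.
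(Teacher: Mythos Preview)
Your proposal is correct and follows essentially the same route as the paper's proof: separate the convex set $\{i[r,\alpha]:r\in\mathcal{F}_1^+(\M),\ r\ge a_0\}$ from the origin via Lemma~\ref{from hb lemma}, represent the separating functional by some $\gamma\in(E^\times(\M))_{sa}^n$ using Lemma~\ref{dual space lemma}, rewrite the resulting inequality as $\tau(r\,y(\alpha,\gamma))\ge c$ via the trace identity $\tau([\alpha(j),r\gamma(j)])=0$, and then invoke Lemmas~\ref{negative part intergtable} and~\ref{trace limit lem}. The only cosmetic differences are that the paper normalises so that $c=1$ and takes $r_0$ to be a $\tau$-finite projection from the outset, whereas you carry the constant $c$ and pass to $\mathfrak{l}(a_0)$ afterwards.
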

\begin{proof} Assume without loss of generality that $k_{E(\M)}(\alpha)>1.$ By Definition \ref{def_qc}, there is $r_0\in\mathcal{F}_1^+(\M)$ (without loss of generality, $r_0$ is a $\tau$-finite projection) such that
\begin{equation}\label{eoc eq0}
\|[r,\alpha]\|_{E(\M)}\ge 1,\quad \forall r\in \mathcal{F}_1^+(\M),\quad r\ge r_0.
\end{equation}

The set $\{r\in \mathcal{F}_1^+(\M),r\ge r_0\}$ is convex. Hence, the set
$$K=\{i[r,\alpha]:\quad r\in \mathcal{F}_1^+(\M),\quad r\ge r_0\}$$
is a linear image of a convex set and is, therefore, convex. Since $\alpha\in\M^n$ and $r\in \mathcal{F}_1^+(\M),$ it follows that $[r,\alpha]\in (\mathcal{F}(\M))^n.$ In particular, $[r,\alpha]\in (E(\M))^n.$ This shows that $K$ is a convex subset of $(E(\M))_{sa}^n$ which does not intersect the open unit ball of $(E(\M))_{sa}^n.$ By Lemma \ref{from hb lemma}, there exists a real-valued $f\in ((E(\M))_{sa}^n)^{\ast}$ such that $f(\zeta)\geq 1$ for every $\zeta\in K.$

By Lemma \ref{dual space lemma},  $((E(\M))_{sa}^n,\|\cdot\|_{E(\M)})^\ast=((E^{\times}(\M))_{sa}^n,\|\cdot\|_{{\rm inf}})$ via the duality pairing given by \eqref{duality pairing}. Hence, there exists $\gamma\in (E^{\times}(\M))_{sa}^n$ such that 
$$f((x(j))_{j=1}^n)=\sum_{j=1}^n\tau(x(j)\gamma(j)),\quad (x(j))_{j=1}^n\in(E(\M))_{sa}^n.$$

We have $f(\zeta)\geq 1$ for all $\zeta\in K.$ By the definition of $K,$ we have
\begin{equation}\label{intermidiate ineq}
\tau\Big(\sum_{j=1}^{n}i[r,\alpha(j)]\gamma(j)\Big)=f(i[r,\alpha])\geq 1,\quad \forall r\in \mathcal{F}_1^+(\M),\quad r\ge r_0.
\end{equation}

Clearly, 
$$ry(\alpha,\gamma)=i\sum_{j=1}^n r[\alpha(j),\gamma(j)]=i\sum_{j=1}^n [r,\alpha(j)]\gamma(j)+i\sum_{j=1}^n[\alpha(j),r\gamma(j)].$$
Since $\gamma(j)\in L_1(\M)+\M$ and $r\in L_1(\M)\cap \M$, it follows that $r\gamma(j)\in L_1(\M).$ Noting that $\alpha(j)\in \M,$ by Lemma \ref{DDP1993 3.4} we have $\tau([\alpha(j),r\gamma(j)])=0$ for every $1\le j\le n.$ Thus,
$$\tau(ry(\alpha,\gamma))=\tau\Big(\sum_{j=1}^{n}i[r,\alpha(j)]\gamma(j)\Big).$$
Combining with \eqref{intermidiate ineq},
\begin{equation}\label{eoc eq1}
\tau(ry(\alpha,\gamma))\geq 1,\quad \forall r\in \mathcal{F}_1^+(\M),\quad r\ge r_0.
\end{equation}
Using Lemma \ref{negative part intergtable}, we obtain that $(y(\alpha,\gamma))_-\in L_1(\M).$ This proves \eqref{item0}.

Note that the set $\{r\in \mathcal{F}_1^+(\M):r\ge r_0\}$ is a directed set. Since $$(y(\alpha,\gamma))_+,(y(\alpha,\gamma))_-\in L_1(\M)+\M,$$
it follows from Lemma \ref{trace limit lem} that
$$\tau((y(\alpha,\gamma))_+)=\lim_{\substack{r\in \mathcal{F}_1^+(\M) \\ r\geq r_0}}\tau(r(y(\alpha,\gamma))_+),\quad \tau((y(\alpha,\gamma))_-)=\lim_{\substack{r\in \mathcal{F}_1^+(\M) \\ r\geq r_0}}\tau(r(y(\alpha,\gamma))_-).$$
Note that $(y(\alpha,\gamma))_-\in L_1(\M).$ We have
$$\tau(y(\alpha,\gamma))=\tau((y(\alpha,\gamma))_+)-\tau(y(\alpha,\gamma))_-)=\lim_{\substack{r\in \mathcal{F}_1^+(\M) \\ r\geq r_0}}\tau(ry(\alpha,\gamma))\stackrel{\eqref{eoc eq1}}{\geq}1.$$
This proves \eqref{item1} and, hence, completes the proof.
\end{proof}

The following result implies that if $\alpha$ is not diagonal modulo $E(\M),$ then there exists a Lorentz ideal $\Lambda_{\psi}$ containing $E$ such that $\alpha$ is not diagonal modulo $\Lambda_{\psi}(\M)$ (recall Theorem \ref{thm_1.2_BSZZ2024}). It is an analogue of \cite[Proposition 2.6]{V1990}.

\begin{proposition}\label{larger obstruction ideal} 
Let $\M$ be a $\sigma$-finite von Neumann algebra with a faithful normal semifinite trace $\tau.$ Suppose $(E,\|\cdot\|_E)$ is a separable symmetric function space on $(0,\infty).$ Let $\alpha\in \M^n$ be a commuting self-adjoint $n$-tuple. If $k_{E(\M)}(\alpha)>0,$ then there exists an increasing concave function $\psi$ on $[0,\infty)$ such that $E\subset \Lambda_{\psi}$ and $k_{\Lambda_{\psi}(\M)}(\alpha)>0.$
\end{proposition}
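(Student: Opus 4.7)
The plan is to use Propositions \ref{existence of commutator 2} and \ref{commmutator estimation prop 2} in tandem, with a specifically designed Lorentz space built directly from $\gamma$ serving as the intermediate obstruction space. Since $k_{E(\M)}(\alpha)>0$ and $E$ is separable, Proposition \ref{existence of commutator 2} yields a non-zero self-adjoint $n$-tuple $\gamma\in (E^{\times}(\M))^n$ such that $(y(\alpha,\gamma))_-\in L_1(\M)$ and $\tau(y(\alpha,\gamma))\neq 0$. Holding onto this $\gamma,$ I would then define
$$\psi(t)=\sum_{j=1}^n\int_0^t \mu(s;\gamma(j))\,ds,\quad t\ge 0.$$
Each $\mu(\gamma(j))$ is non-negative and decreasing, so $\psi$ is increasing, concave, and satisfies $\psi(0)=0$. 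Finiteness of $\psi(t)$ on $(0,\infty)$ follows from $\gamma(j)\in (L_1+L_{\infty})(\M),$ and non-triviality of $\psi$ on $(0,\infty)$ follows from $\gamma\neq 0.$ Hence $\Lambda_{\psi}$ is a genuine Lorentz space.

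Next I would verify the two defining properties of $\psi.$ For the inclusion $E\subset \Lambda_{\psi},$ it suffices to estimate
$$\|f\|_{\Lambda_{\psi}}=\sum_{j=1}^n\int_0^\infty \mu(t;f)\mu(t;\gamma(j))\,dt\le \Big(\sum_{j=1}^n\|\gamma(j)\|_{E^{\times}}\Big)\|f\|_E,\quad f\in E,$$
which follows from the standard Hardy--Littlewood type pairing $\int_0^\infty \mu(t;f)\mu(t;g)\,dt\le \|f\|_E\|g\|_{E^{\times}}$ together with $\gamma(j)\in E^{\times}$ (the constant in the embedding is $C=\sum_j\|\gamma(j)\|_{E^{\times}}<\infty$). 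On the other side, the definition of $\psi$ yields the tautological bound $\int_0^t\mu(s;\gamma(j))\,ds\le \psi(t),$ which by \eqref{M space} gives $\|\gamma(j)\|_{M_{\psi}(\M)}\le 1$ and thus $\gamma\in (M_{\psi}(\M))^n.$

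The conclusion $k_{\Lambda_{\psi}(\M)}(\alpha)>0$ would then follow by contradiction. Suppose $k_{\Lambda_{\psi}(\M)}(\alpha)=0.$ Since $\alpha\in\M^n$ is a commuting self-adjoint $n$-tuple, $\gamma\in (M_{\psi}(\M))^n$ is self-adjoint, and $(y(\alpha,\gamma))_-\in L_1(\M),$ Proposition \ref{commmutator estimation prop 2} applies and forces
$$|\tau(y(\alpha,\gamma))|\le k_{\Lambda_{\psi}(\M)}(\alpha)\sum_{j=1}^n\|\gamma(j)\|_{M_{\psi}(\M)}=0,$$
which contradicts $\tau(y(\alpha,\gamma))\neq 0.$ The conceptually crucial step is the idea of taking $\psi$ to be essentially the cumulative distribution of $\mu(\gamma)$: this single choice simultaneously places $\gamma$ inside the dual Lorentz ideal $M_{\psi}=\Lambda_{\psi}^{\times}$ and produces the continuous embedding $E\subset \Lambda_{\psi}$ by Hardy--Littlewood duality. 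No single step is delicate once this choice is made; the only technical point worth checking carefully is the local integrability of $\mu(\gamma(j))$ near zero, which is automatic from $\gamma(j)\in (L_1+L_{\infty})(\M).$
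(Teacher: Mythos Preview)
Your proposal is correct and follows essentially the same approach as the paper: invoke Proposition~\ref{existence of commutator 2} to obtain $\gamma$, build $\psi$ from the singular value functions of $\gamma$, and conclude via Proposition~\ref{commmutator estimation prop 2}. The only cosmetic difference is that the paper sets $z=\sum_{j=1}^n|\gamma(j)|$ and $\psi(t)=\int_0^t\mu(s;z)\,ds$, whereas you take $\psi(t)=\sum_{j=1}^n\int_0^t\mu(s;\gamma(j))\,ds$; both choices yield the embedding $E\subset\Lambda_\psi$ and $\gamma\in(M_\psi(\M))^n$ for the same Hardy--Littlewood reason.
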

\begin{proof} Let $\gamma\in (E^\times(\M))^n$ be the non-zero $n$-tuple provided by Proposition \ref{existence of commutator 2}. Set $z=\sum_{j=1}^n|\gamma(j)|$ and set
\begin{equation}\label{formula of psi}
\psi(t)=\int_0^t \mu(s;z)ds,\quad t\in [0,\infty).
\end{equation}

For every $x\in L_1\cap L_\infty,$
$$\|\mu(x)\|_{\Lambda_{\psi}}=\int_0^\infty\mu(t;x)d\psi(t)=\int_0^\infty \mu(t;x)\mu(t;z)dt\overset{\eqref{dual norm}}{\le} \|\mu(x)\|_{E}\|\mu(z)\|_{E^{\times}}.$$
Thus, $E^0=\overline{L_1\cap L_\infty}^{\|\cdot\|_E}\subset \Lambda_{\psi}.$ In other words, $E\subset \Lambda_{\psi}$ since $E=E^0$ (by Lemma \ref{separable lemma}).

By \eqref{formula of psi} and the definition of $M_{\psi}$ (see \eqref{M space}), we have $z\in M_{\psi}(\M).$ In particular, $\gamma(j)\in M_{\psi}(\M)$ for $1\le j\le n.$ By Proposition \ref{existence of commutator 2} \eqref{item0}, we have $(y(\alpha,\gamma))_-\in L_1(\M).$ By Proposition \ref{existence of commutator 2} \eqref{item1}, we have $\tau(y(\alpha,\gamma))\neq0.$ It follows now from Proposition \ref{commmutator estimation prop 2} that $k_{\Lambda_{\psi}(\M)}(\alpha)>0.$ This completes the proof.
\end{proof}

\begin{lemma}\label{absolutely continuous lemma}
Let $(E,\|\cdot\|_E)$ be a symmetric function space on $(0,\infty)$. If $E^0\cap L_\infty\subset L_{n,1}$ then $E\cap L_\infty\subset L_{n,1}$.
\end{lemma}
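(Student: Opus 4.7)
The plan is to reduce the inclusion claim for $E \cap L_\infty$ to the hypothesized inclusion on $E^0 \cap L_\infty$ via a simple truncation argument. Given $x \in E \cap L_\infty$, since membership in $L_{n,1}$ depends only on $\mu(x)$ and since $E$, $L_\infty$ are symmetric, I may assume $x = \mu(x)$ is a non-negative, non-increasing function on $(0,\infty)$ bounded by $\|x\|_\infty$.

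For each $m \in \mathbb{N}$, I would consider the truncations $x_m := x\chi_{(0,m)}$. Each $x_m$ is bounded and supported on a set of finite measure, so $x_m \in L_1 \cap L_\infty \subset E^0$. Moreover, $\mu(x_m) = x_m \le \mu(x)$ yields $\|x_m\|_E \le \|x\|_E$ and $\|x_m\|_\infty \le \|x\|_\infty$, whence $x_m \in E^0 \cap L_\infty$ with $\|x_m\|_{E \cap L_\infty} \le \|x\|_{E \cap L_\infty}$. By the hypothesis, $x_m \in L_{n,1}$ for every $m$.

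The next step is to extract a uniform bound $\|x_m\|_{L_{n,1}} \le C\|x\|_{E \cap L_\infty}$. This follows from Fact \ref{symm embedding lem} applied to the symmetric function space $E^0 \cap L_\infty$ (equipped with $\|\cdot\|_{E \cap L_\infty}$), continuously embedded by hypothesis into the symmetric function space $L_{n,1}$. If one prefers to sidestep verifying that $E^0 \cap L_\infty$ is a symmetric function space, the same bound follows from the closed graph theorem applied directly to the identity map $E^0 \cap L_\infty \to L_{n,1}$: this map is well-defined by hypothesis; $E^0 \cap L_\infty$ is Banach since $E^0$ is closed in $E$; and convergence in $E \cap L_\infty$ forces uniform (hence a.e.) convergence, while convergence in $L_{n,1}$ forces local convergence in measure, so the graph is closed.

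Finally I would invoke monotone convergence. Since $x = \mu(x)$ is decreasing and non-negative,
$$\|x_m\|_{L_{n,1}} = \int_0^m x(t)\, d(t^{1/n}) \;\uparrow\; \int_0^\infty x(t)\, d(t^{1/n}) = \|x\|_{L_{n,1}}$$
as $m \to \infty$. Combining with the uniform bound gives $\|x\|_{L_{n,1}} \le C\|x\|_{E \cap L_\infty} < \infty$, so $x \in L_{n,1}$. The only moderately subtle step is securing the uniform estimate on the truncations; everything else is a routine truncation/monotone-convergence argument.
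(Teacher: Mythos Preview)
Your proof is correct and follows essentially the same route as the paper's: both use the continuous embedding $E^0\cap L_\infty\hookrightarrow L_{n,1}$ (Fact~\ref{symm embedding lem}) to get a uniform $L_{n,1}$ bound on the truncations $x\chi_{(0,m)}$, then pass to the limit. The only cosmetic difference is that you first replace $x$ by $\mu(x)$ so that monotone convergence applies directly, whereas the paper works with a general $0\le x$ and cites a Fatou-type property of $L_{n,1}$ from \cite[Proposition~9.2.1]{symmetric space book} to handle the limit.
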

\begin{proof} Suppose $E^0\cap L_\infty\subset L_{n,1}.$ By Fact \ref{symm embedding lem}, there exists a constant $c>0$ such that
$$\|x\|_{L_{n,1}}\le c\|x\|_{E\cap L_{\infty}},\quad x\in E^0\cap L_\infty.$$
Let $0\le x\in E\cap L_\infty$. For every $N\in \mathbb{N},$ note that $|x\chi_{(0,N]}|\le \|x\|_{\infty}\chi_{(0,N]}.$ We have $x\chi_{(0,N]}\in L_1\cap L_\infty\subset E^0.$ Thus, 
$$\|x\chi_{(0,N]}\|_{L_{n,1}}\le c\|x\chi_{(0,N]}\|_{E\cap L_{\infty}}\le c\|x\|_{E\cap L_{\infty}},\quad N\in \mathbb{N}.$$
Thus,
$$\sup_{N\in \mathbb{N}}\|x\chi_{(0,N]}\|_{L_{n,1}}\le c\|x\|_{E\cap L_\infty}.$$
It follows now from \cite[Proposition 9.2.1]{symmetric space book} that $x\in L_{n,1}$ and
$$\|x\|_{L_{n,1}}\le c\|x\|_{E\cap L_\infty}.$$
This completes the proof.
\end{proof}

The following result yields Theorem \ref{thm_main} in the case when $\alpha(j)$ is bounded for each $1\le j\le n.$

\begin{proposition}\label{second main result bounded case}
Let $\M$ be a $\sigma$-finite von Neumann algebra with a faithful normal semifinite trace $\tau.$ Suppose $(E,\|\cdot\|_E)$ is a symmetric function space on $(0,\infty)$. If $E\cap L_\infty\not\subset L_{n,1}$, then for every commuting self-adjoint $n$-tuple $\alpha\in\M^n$ and every $\e>0$, there exists a diagonal $n$-tuple $\delta\in\M^n$ such that 
$$\alpha-\delta\in (E^0(\M))^n,\quad \|\alpha-\delta\|_{(E\cap L_\infty)(\M)}<\e.$$ 
\end{proposition}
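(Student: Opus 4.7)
The plan is to reduce to proving $k_{E(\M)}(\alpha)=0$ and then invoke, in the contrapositive, the Kuroda theorem for Lorentz spaces. By Theorem~\ref{thm_1.2_BSZZ2024} applied to $E$, the stated conclusion is equivalent to $k_{E(\M)}(\alpha)=0$, so I would argue by contradiction: assume $k_{E(\M)}(\alpha)>0$. Since any commutator $[r,\alpha]$ with $r\in\mathcal{F}_1^+(\M)$ and $\alpha\in\M^n$ lies in $\mathcal{F}(\M)\subset E^0(\M)$, and the $E(\M)$- and $E^0(\M)$-norms coincide on $\mathcal{F}(\M)$, this is the same as $k_{E^0(\M)}(\alpha)>0$.

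Next I would run the construction of Proposition~\ref{larger obstruction ideal} with $E$ replaced by $E^0$ to produce an increasing concave $\psi\colon[0,\infty)\to[0,\infty)$ with $\psi(0)=0$, a continuous embedding $E^0\hookrightarrow\Lambda_{\psi}$, and $k_{\Lambda_{\psi}(\M)}(\alpha)>0$. Concretely: the convex set $K=\{i[r,\alpha]:r\in\mathcal{F}_1^+(\M),\,r\ge r_0\}$ lies in $E^0(\M)^n_{sa}$ and avoids its open unit ball; by Hahn--Banach (Lemma~\ref{from hb lemma}) together with the trace-pairing representation of $(E^0(\M)^n_{sa})^*$ via $E^\times(\M)^n_{sa}$, we extract $\gamma\in E^\times(\M)^n_{sa}$ with $(y(\alpha,\gamma))_-\in L_1(\M)$ and $\tau(y(\alpha,\gamma))\neq 0$; then the choice $\psi(t)=\int_0^t\mu(s;z)\,ds$ with $z=\sum_j|\gamma(j)|$ yields $E^0\hookrightarrow\Lambda_{\psi}$ via a H\"older-type bound (as in the proof of Proposition~\ref{larger obstruction ideal}), and Proposition~\ref{commmutator estimation prop 2} combined with $\tau(y(\alpha,\gamma))\neq 0$ gives $k_{\Lambda_{\psi}(\M)}(\alpha)>0$.

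The proof of Theorem~\ref{kuroda thm lorentz} (via Lemma~\ref{wvn for lorentz lemma}) produces, whenever $\Lambda_{\psi}\cap L_\infty\not\subset L_{n,1}$, a diagonal $\delta\in\M^n$ satisfying both $\alpha-\delta\in\Lambda_{\psi}^0(\M)^n$ and $\|\alpha-\delta\|_{(\Lambda_{\psi}\cap L_\infty)(\M)}<\e$; by Theorem~\ref{thm_1.2_BSZZ2024} applied to $\Lambda_{\psi}$, this forces $k_{\Lambda_{\psi}(\M)}(\alpha)=0$. Hence $k_{\Lambda_{\psi}(\M)}(\alpha)>0$ implies $\Lambda_{\psi}\cap L_\infty\subset L_{n,1}$. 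Combined with $E^0\hookrightarrow\Lambda_{\psi}$, we obtain $E^0\cap L_\infty\subset L_{n,1}$, and Lemma~\ref{absolutely continuous lemma} upgrades this to $E\cap L_\infty\subset L_{n,1}$, contradicting the hypothesis.

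The main obstacle I expect is the separability assumption built into Proposition~\ref{larger obstruction ideal}, which can fail for $E^0$ precisely when $\varphi_E(0+)>0$. The resolution is that separability is used in Lemma~\ref{dual space lemma} only to identify the full Banach dual of $E(\M)^n_{sa}$, whereas in our setting the set $K$ already lies in the order-continuous part $E^0(\M)^n$, whose Banach dual equals $E^\times(\M)^n$ via the trace pairing without any separability hypothesis. Carrying out the Hahn--Banach step in $E^0(\M)^n_{sa}$ rather than in $E(\M)^n_{sa}$ is therefore sufficient, and the remainder of the argument of Proposition~\ref{larger obstruction ideal} goes through unchanged.
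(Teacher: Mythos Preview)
Your overall strategy matches the paper's proof: reduce via Theorem~\ref{thm_1.2_BSZZ2024} to $k_{E(\M)}(\alpha)>0$, invoke Proposition~\ref{larger obstruction ideal} to produce a Lorentz space $\Lambda_\psi\supset E$ with $k_{\Lambda_\psi(\M)}(\alpha)>0$, then contradict Theorem~\ref{kuroda thm lorentz}. The only divergence is in how you dispose of the separability hypothesis in Proposition~\ref{larger obstruction ideal}, and there your proposed resolution has a genuine gap.

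You claim that the Banach dual of $E^0(\M)$ is $E^\times(\M)$ via the trace pairing ``without any separability hypothesis.'' This is false exactly in the case you flag, $\varphi_E(0+)>0$. The space $E^0=\overline{L_1\cap L_\infty}^{E}$ is itself a symmetric function space with $\varphi_{E^0}=\varphi_E$, and $E^0$ has order-continuous norm (equivalently, is separable) if and only if $\varphi_{E^0}(0+)=0$; see Lemma~\ref{separable lemma}. When $\varphi_E(0+)>0$ the sequence $\chi_{(0,1/n)}\in L_1\cap L_\infty\subset E^0$ decreases to $0$ while $\|\chi_{(0,1/n)}\|_{E^0}\ge\varphi_E(0+)>0$, so $E^0$ does \emph{not} have order-continuous norm and $(E^0(\M))^*$ strictly contains $E^\times(\M)$ by singular functionals. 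Consequently the Hahn--Banach functional separating $K$ from the unit ball need not arise from any $\gamma\in E^\times(\M)^n$, and the construction of $\psi$ via $z=\sum_j|\gamma(j)|$ breaks down.

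The paper fixes this with a one-line trick you are missing: after passing to $E^0$ (your reduction, justified via Lemma~\ref{absolutely continuous lemma}), replace $E$ by $E+L_1$. Since $(E+L_1)\cap L_\infty=E\cap L_\infty$ with equivalent norms, the hypothesis is preserved; since $L_1\subset E+L_1$ one has $\varphi_{E+L_1}(0+)=0$, and together with $(E+L_1)^0=E^0+L_1=E+L_1$ this makes the new $E$ separable, so Proposition~\ref{larger obstruction ideal} applies verbatim.
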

\begin{proof} Since $E\cap L_\infty\not\subset L_{n,1}$, it follows from Lemma \ref{absolutely continuous lemma} that $E^0\cap L_\infty\not\subset L_{n,1}.$ Hence, it suffices to prove the proposition for the case $E=E^0.$

Note that $E\cap L_\infty=(E+L_1)\cap L_\infty$ with equivalent norms. Thus, replacing $E$ with $E+L_1,$ we may assume that $L_1\subset E$ and thus $\varphi_E(0+)=0.$ Therefore, $E$ is separable by Lemma \ref{separable lemma}.

Suppose there is a commuting self-adjoint $n$-tuple $\alpha\in\M^n$ such that the assertion does not hold. It follows from Theorem \ref{thm_1.2_BSZZ2024} that $k_{E(\M)}(\alpha)>0.$ By Proposition \ref{larger obstruction ideal}, there exists an increasing concave function $\psi$ on $[0,\infty)$ such that $E\subset \Lambda_{\psi}$ and $k_{\Lambda_{\psi}(\M)}(\alpha)>0.$ Theorem \ref{kuroda thm lorentz} and Theorem \ref{thm_1.2_BSZZ2024} now yield $\Lambda_{\psi}\cap L_\infty\subset L_{n,1}.$ In particular, $E\cap L_\infty\subset L_{n,1}$, which is a contradiction. This proves the assertion.
\end{proof}

Now we are in the position to give the proof of Theorem \ref{thm_main}.

\begin{proof}[Proof of Theorem \ref{thm_main}]
For every $\mathbf{k}\in \mathbb{Z}^n$, denote $\mathbf{k}=(k_1,\ldots,k_n).$ Fix $\e>0.$
	
Let
$$p_{\mathbf{k}}=e^{\alpha}(\mathbf{k}+[0,1)^n),\quad \mathbf{k}\in\mathbb{Z}^n.$$
For every $\mathbf{k}\in \mathbb{Z}^n$, by Proposition \ref{second main result bounded case}, choose a commuting diagonal $n$-tuple $\delta_{\mathbf{k}}\in (p_{\mathbf{k}}\M p_{\mathbf{k}})^n$ such that
\begin{equation}\label{norm estimation 2}
\alpha\cdot p_{\mathbf{k}}-\delta_{\mathbf{k}}\in (E^0(p_{\mathbf{k}}\M p_{\mathbf{k}}))^n,\quad \|\alpha\cdot p_{\mathbf{k}}-\delta_{\mathbf{k}}\|_{(E\cap L_\infty)(p_{\mathbf{k}}\M p_{\mathbf{k}})}<\e\cdot 2^{-|\mathbf{k}|_1},
\end{equation}
where $|\mathbf{k}|_1=\sum_{i=1}^n|k_i|.$ Define $n$-tuple $\delta\in ({\rm Aff}(\M))^n$ by setting
\begin{equation}\label{diagonal tuple part}
\delta(j)\xi=\sum_{\mathbf{k}\in\mathbb{Z}^n}\delta_\mathbf{k}(j)\xi,\quad \xi\in {\rm dom}(\alpha(j)),\ 1\leq j\leq n.
\end{equation}
We now show that the series on the right-hand side converges in $H$. 

Fix $1\le j\le n.$ For every $\xi\in {\rm dom}(\alpha(j))$, noting that $\sum_{\mathbf{k}\in \mathbb{Z}^n}p_{\mathbf{k}}=\mathbf{1},$ we have
$$\alpha(j)\xi=\alpha(j)(\sum_{\mathbf{k}\in \mathbb{Z}^n}p_{\mathbf{k}}\xi)=\sum_{\mathbf{k}\in \mathbb{Z}^n}\alpha(j)p_{\mathbf{k}}\xi.$$
From \eqref{norm estimation 2}, $\sum_{\mathbf{k}\in \mathbb{Z}^n} \alpha(j)p_{\mathbf{k}}-\delta_{\mathbf{k}}(j)$ converges in the uniform norm and thus converges also in the strong operator topology. Therefore, $\sum_{\mathbf{k}\in \mathbb{Z}^n} \delta_{\mathbf{k}}(j)\xi$ converges for each $\xi\in {\rm dom}(\alpha(j)).$

Since $\{\delta_{\mathbf{k}}(j)\}_{\mathbf{k}\in \mathbb{Z}^n}$ are pairwise orthogonal diagonal operators, it follows that $\delta(j)$ is diagonal. Clearly, $\delta$ is a commuting $n$-tuple. By the triangle inequality and \eqref{norm estimation 2},
$$\|\alpha-\delta\|_{(E\cap L_\infty)(\M)}\le \sum_{\mathbf{k}\in\mathbb{Z}^n}\|\alpha\cdot p_{\mathbf{k}}-\delta_\mathbf{k}\|_{(E\cap L_\infty)(\M)}\leq\sum_{\mathbf{k}\in\mathbb{Z}^n}\e\cdot 2^{-|\mathbf{k}|_1}=3^n\e,$$
where the equality is due to $\sum_{k\in \mathbb{Z}}2^{-|k|}=3.$
Since $\alpha\cdot p_{{\bf k}}-\delta_{{\bf k}}\in (E^0(\M)\cap \M)^n$ for every ${\bf k}\in \mathbb{Z}^n$, it follows from the completeness of $(E^0(\M)\cap \M)^n$ that $\alpha-\delta\in (E^0(\M)\cap \M)^n.$
\end{proof}

\section{Final remarks}\label{sec_rem}

In this section, we present further remarks to the converse of Theorem \ref{thm_main} and Corollary \ref{cor_main}.

The converse of Corollary \ref{cor_main} holds for the case $\M=B(H_0)$, which is a consequence of the classical Kato-Rosenblum theorem. Indeed, Kato \cite{Kato1957} and Rosenblum \cite{Rosenblum1957} showed that, up to unitary equivalence, the absolutely continuous part of a self-adjoint operator in $B(H_0)$ is invariant under trace-class perturbations. Note that $L_1(B(H_0))$ is the trace class in $B(H_0)$. Thus, if the spectral measure of $a$ has non-zero absolutely continuous part, and if $E$ is a symmetric function space on $(0,\infty)$ satisfying $E\subset L_1$, then there does not exist diagonal operator $d\in B(H_0)$ such that $a-d\in E(B(H_0))$.

However, one should not expect a direct generalisation of the Kato-Rosenblum theorem to hold in the general semifinite von Neumann algebra $\M.$ 
Indeed, by \cite[Example 2.4.2]{LSSW2018}, there exists a semifinite von Neumann algebra $\M$ and a self-adjoint operator $a$ therein, such that the spectral measure of $a$ is absolutely continuous, but for every $\e>0,$ there is a diagonal $d\in\M$ satisfying $\|a-d\|_{L_1(\M)}<\e.$ 
By introducing the notion of {\em norm absolutely continuous projections,} Li et al. \cite{LSSW2018} obtained a generalisation of the Kato-Rosenblum theorem to semifinite von Neumann algebras. As a consequence, if $P_{ac}^\infty(a)\ne0,$ then there exists no diagonal $d\in\M$ such that $a-d\in L_1(\M).$ Here $P_{ac}^\infty(a)$ is the supremum of all norm absolutely continuous projections of $a$ (see \cite{LSSW2018}). The condition that $P_{ac}^\infty(a)\ne0$ can be characterised as follows: $P_{ac}^\infty(a)\ne0$ if and only if there is a self-adjoint element $c\in\M$ such that $i(ac-ca)\ge0$ (see \cite[Proposition 5.3.1]{LSSW2018}). The latter condition is equivalent to saying that $a$ is the real part of a non-normal hyponormal operator.


For the case $\M=B(H_0)$ and $n\ge 2$, the converse of Theorem \ref{thm_main} also holds. This is a consequence of the following extension of the Kato-Rosenblum theorem for $n$-tuples in $B(H_0)$, presented by Voiculescu in \cite[Theorem 2.2]{V1981} and \cite{V1979}. 

\begin{theorem*}[{\cite[Theorem 2.2]{V1981}}] If $\alpha,\beta\in (B(H_0))^n$ are commuting self-adjoint $n$-tuples such that $\alpha(j)-\beta(j)\in L_{n,1}(B(H_0))$ for every $1\le j\le n,$ then the absolutely continuous parts of $\alpha$ and $\beta$ are unitarily equivalent.
\end{theorem*}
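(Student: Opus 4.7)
The plan is to adapt the Birman--Kato--Rosenblum wave operator approach from the one-variable trace-class setting to the commuting $n$-tuple setting, with $L_{n,1}(B(H_0))$ replacing the trace class $L_1(B(H_0))$ as the perturbation ideal. The choice of $L_{n,1}$ is forced by scaling: for a commuting $n$-tuple, a compression to an $n$-dimensional spectral cube of side $2^{-m}$ has rank of order $2^{mn}$, so a sequence of singular values coming from such compressions is summed naturally with weight $t^{1/n}$, which matches exactly the fundamental function of $L_{n,1}$.

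First I would reduce to the bounded self-adjoint case via functional calculus applied to each coordinate separately, and further to a bounded joint spectral subset, so that compressing by $e^{\alpha}(\mathbf{k}+[0,1)^n)$ makes sense for each $\mathbf{k}\in\mathbb{Z}^n$ and the perturbation condition $\alpha(j)-\beta(j)\in L_{n,1}$ is preserved. Next, introduce a multi-parameter wave operator: for $\mathbf{t}\in\mathbb{R}^n$, set
\[
W(\mathbf{t}) = e^{i\langle\mathbf{t},\beta\rangle}e^{-i\langle\mathbf{t},\alpha\rangle}P_{\mathrm{ac}}(\alpha),
\]
where $\langle\mathbf{t},\alpha\rangle=\sum_{j=1}^n t_j\alpha(j)$ is well-defined by commutativity of $\alpha(1),\ldots,\alpha(n)$, and $P_{\mathrm{ac}}(\alpha)$ denotes the orthogonal projection onto the absolutely continuous subspace of the joint spectral measure $e^{\alpha}$ with respect to Lebesgue measure on $\mathbb{R}^n$.

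The key Cook-type estimate is that
\[
\int_{\mathbb{R}^n}\bigl\|(\alpha(j)-\beta(j))e^{-i\langle\mathbf{t},\alpha\rangle}P_{\mathrm{ac}}(\alpha)\xi\bigr\|\,d\mathbf{t}<\infty
\]
for $\xi$ in a dense subspace of the range of $P_{\mathrm{ac}}(\alpha)$. Via Plancherel on $\mathbb{R}^n$ combined with the joint spectral decomposition of $\alpha$, this integral is controlled by an $L_{n,1}$-norm of $\alpha(j)-\beta(j)$ paired against Fourier-side weights; the $t^{1/n}$-growth of the fundamental function of $L_{n,1}$ balances exactly the $n$-dimensional Jacobian of the joint spectral measure. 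Granted this, one extracts a strong limit $W$ of $W(\mathbf{t})$ along an appropriate averaging procedure, and verifies by standard intertwining computations that $W$ is a partial isometry from the range of $P_{\mathrm{ac}}(\alpha)$ onto the range of $P_{\mathrm{ac}}(\beta)$ satisfying $W\alpha(j)=\beta(j)W$ for every $j$; reversing the roles of $\alpha$ and $\beta$ gives surjectivity and thus the desired unitary equivalence of absolutely continuous parts.

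The main obstacle will be the multi-parameter wave operator limit itself. For $n=1$ the classical strong limit as $t\to\pm\infty$ produces $W$ directly, but for $n\ge 2$ there is no canonical ``time direction'' in $\mathbb{R}^n$ along which to take a limit; one must either average over directions in $\mathbb{R}^n$ --- which is precisely why $L_{n,1}$, rather than $L_n$, is the correct ideal, the extra $(\cdot,1)$ index arising from radial integration --- or replace wave operators with a more algebraic intertwiner assembled from joint spectral projections of $\alpha$ and $\beta$, in the spirit of the quasicentral modulus machinery used elsewhere in the present paper. In that latter approach, $L_{n,1}$-invariance of $k_{L_{n,1}(B(H_0))}$ together with the fact that its positivity detects $P_{\mathrm{ac}}(\alpha)\neq 0$, combined with a multiplicity-theoretic argument for the absolutely continuous part of the joint spectral measure, would provide an alternative route to the conclusion.
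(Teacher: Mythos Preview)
The paper does not prove this theorem. It is quoted in Section~\ref{sec_rem} (Final remarks) purely as a citation of Voiculescu's result \cite[Theorem 2.2]{V1981}, used only to explain why the converse of Theorem~\ref{thm_main} holds when $\M=B(H_0)$. There is therefore no proof in the paper to compare your proposal against.

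As for the proposal itself: the multi-parameter wave operator approach you sketch is not how Voiculescu proved this. You correctly identify the main obstacle --- for $n\ge 2$ there is no canonical time direction in $\mathbb{R}^n$ along which to take a strong limit --- but the Cook-type estimate you write, an integral over all of $\mathbb{R}^n$, does not by itself produce a wave operator; one would have to pass to limits along rays or average over the sphere, and neither the existence nor the intertwining property of such a limit is straightforward. Voiculescu's actual argument in \cite{V1981} proceeds via singular integral operators (multi-dimensional Riesz transforms) and trace estimates tied to the $L_{n,1}$ condition, combined with the machinery of \cite{V1979}. Your ``alternative route'' via $k_{L_{n,1}(B(H_0))}$ is closer in spirit, but the assertion that positivity of $k_{L_{n,1}}$ detects $P_{\mathrm{ac}}(\alpha)\neq 0$, and that this together with multiplicity theory yields unitary equivalence of the absolutely continuous parts, is itself the substantive content of \cite{V1979,V1981} and cannot simply be invoked.
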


By the theorem above, if a commuting self-adjoint $n$-tuple $\alpha\in (B(H_0))^n$ has non-zero absolutely continuous part, and if $E$ is a symmetric function space on $(0,\infty)$ satisfying $E\cap L_\infty\subset L_{n,1}$, there exists no commuting diagonal $n$-tuple $\delta\in (B(H_0))^n$ such that $\alpha(j)-\delta(j)\in E(B(H_0))$ for every $1\le j\le n$.

It remains an open problem to establish a proper extension of \cite[Theorem 2.2]{V1981} to the case of a general semifinite von Neumann algebra $\M$. 
To achieve this, it is unsurprising that an appropriate generalised notion of the ``absolutely continuous part'' of a commuting self-adjoint $n$-tuple $\alpha\in\M^n$ is required.


\appendix

%
%

\section{Kuroda's theorem for a single operator --- a proof without using quasicentral modulus}\label{app single operator}

In this section, we provide a simpler proof of Corollary \ref{cor_main}, which is a special case of Theorem \ref{thm_main} when $n=1$. Here, the quasicentral modulus is not needed.

The following lemma yields Corollary \ref{cor_main} for bounded operators provided that there exists a generating $\tau$-finite projection. 
\begin{lemma}\label{wvn pre lemma} Let $\mathcal{M}$ be a $\sigma$-finite von Neumann algebra with a faithful normal semifinite trace $\tau$ and let $a=a^\ast\in\mathcal{M}$ admit a generating $\tau$-finite projection $q$. Suppose $E$ is a symmetric function space on $(0,\infty)$ such that $E\not\subset L_1.$ 
For every $\varepsilon>0,$ there exists a diagonal operator $d\in\M$ such that 
$$a-d\in E^0(\M),\quad \|a-d\|_{(E\cap L_\infty)(\M)}<\e.$$
\end{lemma}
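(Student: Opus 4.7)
The plan is to construct the diagonal operator $d$ directly from the quasicentral approximate unit of Lemma~\ref{proj co}, thereby bypassing the appeal to Theorem~\ref{thm_1.2_BSZZ2024} which underlies the proof of Theorem~\ref{thm_main}. After translating and rescaling $a$, I may assume ${\rm Spec}(a)\subset[0,1)$. Applying Lemma~\ref{proj co} with $n=1$ yields $\tau$-finite projections $p_m\uparrow \mathbf{1}$ with $\|[p_m,a]\|_\infty\le 2^{-m}$ and $\tau(p_m)\le 2^m\tau(q)$. By Lemma~\ref{symmetric norm estimation lemma} combined with Lemma~\ref{fundamental function trivial lemma},
$$\|[p_m,a]\|_{E(\M)}\le (2\tau(q)\vee 1)\cdot 2^{-m}\varphi_E(2^m),$$
and the hypothesis $E\not\subset L_1$ via Lemma~\ref{lemma in SS} gives $\varphi_E(t)=o(t)$, so this tends to zero. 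Passing to a subsequence, I arrange that $\sum_m\|[p_m,a]\|_{E(\M)}$ and $\sum_m\|[p_m,a]\|_\infty$ are both smaller than $\e/4$.

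Set $e_m:=p_m-p_{m-1}$ (with $p_{-1}=0$); these are pairwise orthogonal $\tau$-finite projections summing strongly to $\mathbf{1}$. Within the finite von Neumann algebra $e_m\M e_m$, the bounded self-adjoint operator $e_m a e_m$ admits a spectral step approximation: I choose finitely many pairwise orthogonal spectral projections $\{f_{m,k}\}_k\subset e_m\M e_m$ summing to $e_m$ and scalars $c_{m,k}$ such that $d_m:=\sum_k c_{m,k}f_{m,k}$ satisfies both $\|e_m a e_m-d_m\|_\infty\le\eta_m$ and $\eta_m\varphi_E(\tau(e_m))\le \e\cdot 2^{-m-2}$. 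Then $d:=\sum_m d_m$ converges strongly (pairwise orthogonal supports, uniformly bounded) to an element of $\M$. Since $\{f_{m,k}\}_{m,k}$ is countable, pairwise orthogonal, sums to $\mathbf{1}$, and $df_{m,k}=c_{m,k}f_{m,k}$, the operator $d$ is diagonal in the sense of the paper.

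To control $a-d$, I exploit the telescoping identity $a=\sum_m(p_m a p_m-p_{m-1}a p_{m-1})$ together with the block expansion $p_m a p_m-p_{m-1}a p_{m-1}=e_m a e_m+p_{m-1}a e_m+e_m a p_{m-1}$; the orthogonality $e_m p_{m-1}=0$ then yields $p_{m-1}a e_m=[p_{m-1},a]e_m$ and $e_m a p_{m-1}=-e_m[p_{m-1},a]$, and hence
$$a-d=\sum_m(e_m a e_m-d_m)+\sum_m\bigl([p_{m-1},a]e_m-e_m[p_{m-1},a]\bigr).$$
The first sum is controlled in $E(\M)$-norm by $\sum_m\eta_m\varphi_E(\tau(e_m))\le\e/2$, using Lemma~\ref{symmetric norm estimation lemma} and the pairwise orthogonality of supports; the second by $2\sum_m\|[p_{m-1},a]\|_{E(\M)}\le\e/2$, with parallel bounds for $\|\cdot\|_\infty$. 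Every term of both series lies in $\mathcal{F}(\M)$, and the series are $E(\M)$-absolutely convergent, so $a-d\in\overline{\mathcal{F}(\M)}^{E(\M)}=E^0(\M)$. The main obstacle is the algebraic identity reducing the off-diagonal block contributions to commutator norms already controlled in the preparation step; exploiting $e_m p_{m-1}=0$ is the key simplification that replaces the invocation of Theorem~\ref{thm_1.2_BSZZ2024}.
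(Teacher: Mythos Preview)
Your proof is correct and follows essentially the same approach as the paper's own proof of this lemma in Appendix~\ref{app single operator}: both use Lemma~\ref{proj co} to produce the increasing projections, decompose $a$ as its block-diagonal part with respect to the orthogonal family $\{e_m\}$ plus an off-diagonal remainder whose blocks are rewritten as commutators (you via $p_{m-1}ae_m=[p_{m-1},a]e_m$, the paper via $(\mathbf{1}-q_{l+1})ar_l=[a,q_{l+1}]q_{l+1}r_l$), and then approximate each diagonal block by a step function of its spectral measure. The only minor imprecision is that your stated constraint on $\eta_m$ controls the $E$-norm but not the $L_\infty$-norm of the first sum; you should also impose $\eta_m\le\varepsilon\cdot 2^{-m-2}$ (as the paper does by taking $\eta_l=2^{-l}/\max\{1,\varphi_E(\tau(r_l))\}$) to secure the claimed parallel $L_\infty$ bound.
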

\begin{proof} Simply adding $\|a\|_\infty\cdot\mathbf{1}$ to $a$ and rescaling, we may assume without loss of generality that ${\rm Spec}(a)\subset [0,1)$. Let $\{p_m\}_{m\ge1}$ be an increasing sequence of $\tau$-finite projections constructed in Lemma \ref{proj co} (applied to $\alpha=a$). We have
$$\|[a,p_m]\|_{\infty}\leq 2^{-m},\quad \tau(p_m)\leq 2^m\tau(q),\quad m\geq0.$$
It is easy to see that $\mathfrak{l}([a,p_m])\le \mathfrak{l}(ap_m)\vee \mathfrak{l}(p_ma).$ Hence,
$$\tau(\mathfrak{l}([a,p_m]))\le 2\tau(p_m)\le 2^{m+1}\tau(q).$$
Thus, by Lemma \ref{symmetric norm estimation lemma},
$$\|[a,p_m]\|_{E(\M)}\leq 2^{-m}\varphi_E(2^{m+1}\tau(q)).$$
Combining with Lemma \ref{fundamental function trivial lemma}, we obtain
$$\|[a,p_m]\|_{E(\M)}\le \max\{1,2\tau(q)\}\cdot 2^{-m}\varphi_E(2^m),\quad m\geq0.$$

Since $E\not\subset L_1,$ by Lemma \ref{lemma in SS}, we have $2^{-m}\varphi_E(2^m)\to0$ as $m\to\infty.$ Thus, we can choose a strictly increasing sequence $\{m_k\}_{k\geq0}\subset\mathbb{Z}_+$ such that 
$$\max\{1,2\tau(q)\}\cdot 2^{-m_k}\varphi_E(2^{m_k})\leq 2^{-k}.$$ 
Set $q_k=p_{m_k}$ for $k\geq0.$ We have
$$\|[a,q_k]\|_{E(\M)}\leq 2^{-k},\quad \|[a,q_k]\|_{\infty}\leq 2^{-k},\quad k\geq0.$$
Denote for brevity $r_k=q_{k+1}-q_k,$ $k\geq0.$

For every $k\geq0,$ set
$$a_k=q_kaq_k+\sum_{l=k}^{\infty}r_lar_l.$$

{\bf Step 1:} We show that
$$\|a_k-a\|_{(E\cap L_\infty)(\M)}\leq 2^{2-k},\quad k\geq0.$$
Clearly,
$$a-a_k=q_ka(\mathbf{1}-q_k)+(\mathbf{1}-q_k)aq_k+(\mathbf{1}-q_k)a(\mathbf{1}-q_k)-\sum_{l\ge k} r_lar_l.$$
Noting that $\mathbf{1}-q_k=\sum_{l\ge k} r_l,$ we obtain 
\begin{multline*}
(\mathbf{1}-q_k)a(\mathbf{1}-q_k)-\sum_{l\ge k} r_lar_l=\sum_{i\ge k}\sum_{j\ge k} r_iar_j-\sum_{l\ge k} r_lar_l\\
=\sum_{l\ge k} \Big(\sum_{i\ge l+1} r_iar_l+r_la\sum_{j\ge l+1} r_j\Big)=\sum_{l\ge k}(\mathbf{1}-q_{l+1})ar_l+r_la(\mathbf{1}-q_{l+1}).
\end{multline*}
Hence,
$$a-a_k=q_ka(\mathbf{1}-q_k)+(\mathbf{1}-q_k)aq_k+\sum_{l\ge k}(\mathbf{1}-q_{l+1})ar_l+r_la(\mathbf{1}-q_{l+1}).$$
By the triangle inequality,
\begin{multline*}
\|a-a_k\|_{(E\cap L_\infty)(\M)}\leq 2\|q_ka(\mathbf{1}-q_k)\|_{(E\cap L_\infty)(\M)}+2\sum_{l\ge k}\|(\mathbf{1}-q_{l+1})ar_l\|_{(E\cap L_\infty)(\M)}\\
\leq 2\|q_k[a,q_k]\|_{(E\cap L_\infty)(\M)}+2\sum_{l\ge k}\|(\mathbf{1}-q_{l+1})aq_{l+1}\|_{(E\cap L_\infty)(\M)}\\
= 2\|q_k[a,q_k]\|_{(E\cap L_\infty)(\M)}+2\sum_{l\ge k}\|[a,q_{l+1}]q_{l+1}\|_{(E\cap L_\infty)(\M)}\\
\le 2\|[a,q_k]\|_{(E\cap L_\infty)(\M)}+2\sum_{l\ge k}\|[a,q_{l+1}]\|_{(E\cap L_\infty)(\M)}\leq 2\sum_{l\ge k}2^{-l}=2^{2-k}.
\end{multline*}

{\bf Step 2:} By the spectral theorem, every self-adjoint operator in a von Neumann algebra can be approximated in the uniform norm by diagonal operators. For every $l\geq 0,$ choose a diagonal operator $b_l\in r_l\M r_l$ such that
$$\|r_lar_l-b_l\|_{\infty}\leq\frac1{2^l\max\{1,\varphi_E(\tau(r_l))\}}.$$
It follows from Lemma \ref{symmetric norm estimation lemma} that
$$\|r_lar_l-b_l\|_{E(\M)}\le \|r_lar_l-b_l\|_{\infty}\varphi_E(\tau(r_l))\le 2^{-l}.$$

For every $k\geq0,$ choose a diagonal operator $c_k\in q_k\M q_k$ such that
$$\|q_kaq_k-c_k\|_{\infty}\leq\frac1{2^k\max\{1,\varphi_E(\tau(q_k))\}}.$$
It follows from Lemma \ref{symmetric norm estimation lemma} that
$$\|q_kaq_k-c_k\|_{E(\M)}\le \|q_kaq_k-c_k\|_{\infty}\varphi_E(\tau(q_k))\le 2^{-k}.$$

For every $k\ge0,$ set
$$d_k=c_k+\sum_{l\ge k}b_l,$$
where the series converges in the strong operator topology due to Lemma \ref{trivial lemma}. Since the summands are diagonal and pairwise orthogonal, it follows that $d_k$ is diagonal for every $k\geq0.$ By the triangle inequality,
\begin{multline*}
\|a_k-d_k\|_{(E\cap L_\infty)(\M)}\leq\|q_kaq_k-c_k\|_{(E\cap L_\infty)(\M)}+\sum_{l\ge k}\|r_lar_l-b_l\|_{(E\cap L_\infty)(\M)}\\
\leq 2^{-k}+\sum_{l\ge k}2^{-l}\leq 2^{2-k}.
\end{multline*}
Combining this with the estimate in Step 1, we obtain that
$$\|a-d_k\|_{(E\cap L_\infty)(\M)}\leq 2^{3-k},\quad k\geq0.$$
This completes the proof.
\end{proof}

The following lemma yields Corollary \ref{cor_main} in the case when $a$ is bounded.
\begin{lemma}\label{wvn pre bounded case} Let $\mathcal{M}$ be a $\sigma$-finite von Neumann algebra with a faithful normal semifinite trace $\tau$. Let $a\in \M$ be self-adjoint. Suppose $E$ is a symmetric function space on $(0,\infty)$ such that $E\not\subset L_1.$ For every $\varepsilon>0,$ there exists a diagonal operator $d\in \M$ such that $a-d\in E^{0}(\M)$ and $\|a-d\|_{(E\cap L_\infty)(\M)}<\e.$
\end{lemma}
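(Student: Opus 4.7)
The plan is to deduce this lemma from Lemma \ref{wvn pre lemma} (the generating-projection case) by the same decomposition technique that was used at the end of Section \ref{lorentz sec} to pass from Lemma \ref{wvn for lorentz lemma} to Theorem \ref{kuroda thm lorentz}. In other words, I will chop $\mathbf{1}$ into a countable orthogonal sum of $a$-invariant projections, on each of which $a$ admits a generating $\tau$-finite projection, then apply Lemma \ref{wvn pre lemma} inside each piece and assemble the resulting diagonal operators.

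First, let $q \mapsto e_q$ be the assignment from Lemma \ref{qe lemma} applied to the self-adjoint $1$-tuple $\alpha = a$. By Lemma \ref{generating projections sequence lemma}, there is a sequence $\{q_k\}_{k \geq 1}$ of pairwise orthogonal $\tau$-finite projections in $\mathcal{M}$ such that the projections $\{e_{q_k}\}_{k \geq 1}$ are pairwise orthogonal with $\sum_{k \geq 1} e_{q_k} = \mathbf{1}$, each $e_{q_k}$ commutes with $a$, and $q_k$ is a generating projection for $a e_{q_k}$ in $e_{q_k} \mathcal{M} e_{q_k}$. Fix $\varepsilon > 0$. Applying Lemma \ref{wvn pre lemma} inside the reduced algebra $e_{q_k} \mathcal{M} e_{q_k}$ (which inherits a $\sigma$-finite von Neumann algebra structure with the restricted trace), I obtain, for each $k \geq 1$, a diagonal operator $d_k \in e_{q_k} \mathcal{M} e_{q_k}$ such that
\[
a e_{q_k} - d_k \in E^0(e_{q_k} \mathcal{M} e_{q_k}), \qquad \|a e_{q_k} - d_k\|_{(E \cap L_\infty)(e_{q_k} \mathcal{M} e_{q_k})} < 2^{-k} \varepsilon.
\]

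Next, set $d = \sum_{k \geq 1} d_k$. The summands $\{d_k\}_{k \geq 1}$ are pairwise orthogonal (since they live in the pairwise orthogonal corners $e_{q_k} \mathcal{M} e_{q_k}$) and uniformly bounded, because
\[
\|d_k\|_\infty \leq \|a e_{q_k}\|_\infty + 2^{-k}\varepsilon \leq \|a\|_\infty + \varepsilon.
\]
Hence, by Lemma \ref{trivial lemma}, the series defining $d$ converges in the strong operator topology to a bounded operator, and since each $d_k$ is diagonal and the $d_k$ are pairwise orthogonal, $d$ is itself diagonal in $\mathcal{M}$.

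Finally, write $a - d = \sum_{k \geq 1}(a e_{q_k} - d_k)$, a series of pairwise orthogonal terms. By the triangle inequality and the estimates above, the series converges absolutely in $(E \cap L_\infty)(\mathcal{M})$ with
\[
\|a - d\|_{(E \cap L_\infty)(\mathcal{M})} \leq \sum_{k \geq 1} 2^{-k} \varepsilon = \varepsilon.
\]
Since each partial sum lies in $E^0(\mathcal{M})$ and $E^0(\mathcal{M})$ is closed in $E(\mathcal{M})$, the limit $a - d$ belongs to $E^0(\mathcal{M})$ as well. There is no serious obstacle here: the only mildly delicate point is verifying that the strong-operator limit $d$ is simultaneously the limit in $(E \cap L_\infty)(\mathcal{M})$ of the diagonal partial sums shifted to $a$, which is immediate from the pairwise orthogonality. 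Rescaling $\varepsilon$ at the outset (replacing $\varepsilon$ by $\varepsilon/2$) yields the strict inequality demanded by the statement.
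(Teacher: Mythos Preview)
Your proof is correct and follows essentially the same approach as the paper's own proof: decompose $\mathbf{1}$ via Lemmas \ref{qe lemma} and \ref{generating projections sequence lemma}, apply Lemma \ref{wvn pre lemma} on each corner $e_{q_k}\M e_{q_k}$ with weights $2^{-k}\e$, assemble $d=\sum_k d_k$ using Lemma \ref{trivial lemma}, and conclude by the triangle inequality and closedness of $E^0(\M)$. The only cosmetic difference is that the paper writes the strict inequality $\|a-d\|_{(E\cap L_\infty)(\M)}<\e$ directly (which is legitimate since each strict inequality $<2^{-k}\e$ yields a strictly positive slack that sums to something positive), whereas you pass through $\leq\e$ and then rescale; both are fine.
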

\begin{proof} Let $q\mapsto e_q$ be the mapping on the lattice $\mathcal{P}(\M)$ defined as in Lemma \ref{qe lemma}. By Lemma \ref{generating projections sequence lemma}, there exists a sequence $\{q_k\}_{k\ge1}$ of pairwise orthogonal $\tau$-finite projections in $\M$ such that the projections $\{e_{q_k}\}_{k\ge1}$ are pairwise orthogonal and $\sum_{k\ge 1} e_{q_k}=\mathbf{1}.$ By Lemma \ref{qe lemma}, each $e_{q_k}$ commutes with $a,$ and $q_k$ is a generating projection for $ae_{q_k}$ in $e_{q_k}\M e_{q_k}.$ Thus, for every $\e>0,$ by Lemma \ref{wvn pre lemma} (applied to the pair $(ae_{q_k},e_{q_k}\M e_{q_k})$), there exists a diagonal element $d_k\in e_{q_k}\M e_{q_k}$ such that $ae_{q_k}-d_k\in E^0(e_{q_k}\M e_{q_k}),$
$$\|ae_{q_k}-d_k\|_{(E\cap L_\infty)(e_{q_k}\M e_{q_k})}< 2^{-k}\e.$$
In particular, $ae_{q_k}-d_k\in E^0(\M),$
\begin{equation}\label{estimate on subspace}
\|ae_{q_k}-d_k\|_{(E\cap L_\infty)(\M)}< 2^{-k}\e.
\end{equation}

Set $d=\sum_{k\geq1}d_k.$ Since the sequence $\{d_k\}_{k\geq1}$ consists of pairwise orthogonal elements and since
$$\sup_{k\geq1}\|d_k\|_{\infty}<\sup_{k\geq1}\|ae_{q_k}\|_{\infty}+2^{-k}\e\leq \|a\|_{\infty}+\e,$$
it follows from Lemma \ref{trivial lemma} that $d\in\M.$

We write
$$a-d=\sum_{k\geq1}(ae_{q_k}-d_k),$$
where the series converges absolutely in $(E\cap L_\infty)(\M)$ due to \eqref{estimate on subspace}, and we have $\|a-d\|_{(E\cap L_\infty)(\M)}< \e$. Since every summand in the right-hand side of the above equality belongs to $E^0(\M),$ it follows that $a-d\in E^0(\M).$
\end{proof}


\begin{proof}[Proof of Corollary \ref{cor_main}] 
Fix $\e>0.$ For every $k\in \mathbb{Z}$, let
$$p_{k}=e^{a}(k+[0,1)),\quad k\in\mathbb{Z}.$$
Applying Lemma \ref{wvn pre bounded case} to the pair $(ap_k,p_k\M p_k)$, we find a diagonal $d_k\in p_{k}\M p_{k}$ such that $ap_k-d_k\in E^0(p_k\M p_k)$ and
\begin{equation}\label{norm estimation}
\quad \|a p_{k}-d_{k}\|_{(E\cap L_\infty)(p_{\mathbf{k}}\M p_{\mathbf{k}})}<\e\cdot 2^{-|k|},\quad k\in\mathbb{Z}.
\end{equation}
Define operator $d\in {\rm Aff}(\M)$ by setting
\begin{equation}\label{diagonal part}
d\xi=\sum_{k\in\mathbb{Z}}d_k\xi,\quad \xi\in {\rm dom}(a).
\end{equation}
We now show that the series on the right-hand side converges in $H$. 

For every $\xi\in {\rm dom}(a)$, since $\sum_{k\in \mathbb{Z}}p_k=\mathbf{1},$ by the spectral theorem we have
$$a\xi=a(\sum_{k\in \mathbb{Z}}p_k\xi)=\sum_{k\in \mathbb{Z}}ap_k\xi.$$
From \eqref{norm estimation}, $\sum_{k\in \mathbb{Z}} ap_k-d_k$ converges in the uniform norm and thus converges also in the strong operator topology. Therefore, $\sum_{k\in \mathbb{Z}} d_k\xi$ converges for each $\xi\in {\rm dom}(a).$

Since $\{d_k\}_{k\in \mathbb{Z}}$ are pairwise orthogonal diagonal operators, it follows that $d$ is diagonal. By the triangle inequality,
$$\|a-d\|_{(E\cap L_\infty)(\M)}\leq\sum_{k\in\mathbb{Z}}\|a p_k-d_k\|_{(E\cap L_\infty)(\M)}< \sum_{k\in\mathbb{Z}}\e\cdot 2^{-|k|}=3\e,$$
and $a-d\in E^0(\M)\cap \M.$
\end{proof}


\section*{Acknowledgement}
F. Sukochev and D. Zanin are supported by the Australian Research Council DP230100434. H. Zhao acknowledges the support of Australian Government Research Training Program (RTP) Scholarship.


\end{document}